\documentclass[reqno,10pt]{amsart}
\usepackage{graphicx}

\usepackage{amsmath,amssymb,amsfonts,amsthm}
\usepackage{mathtools}
\usepackage[super]{nth}

\usepackage{stackengine}
\newcommand{\subsetdots}{
	\mathrel{\stackon[1.5pt]{$\subset$}{{..}}}
}

\usepackage[mathscr]{eucal}
\usepackage{bbm}
\usepackage{array}
\usepackage{mathdots}
\usepackage{ytableau}

\usepackage{hyperref}
\newcommand\myshade{85}
\colorlet{mylinkcolor}{red}
\colorlet{myurlcolor}{purple}
\colorlet{mycitecolor}{blue}
\hypersetup{
	linkcolor  = mylinkcolor,
	citecolor  = mycitecolor,
	urlcolor   = myurlcolor!\myshade!black,
	colorlinks = true,
}

\usepackage{tikz}

\usetikzlibrary{arrows,math,calc}
\usetikzlibrary{shapes.geometric}
\usetikzlibrary{patterns}
\usepackage[all,cmtip]{xy}

\usepackage{enumerate}
\usepackage[shortlabels]{enumitem}

\usepackage{subcaption}

\usepackage{etoolbox}
\patchcmd{\section}{\scshape}{\bfseries}{}{}
\makeatletter
\renewcommand{\@secnumfont}{\bfseries}
\makeatother

\theoremstyle{definition}
\newtheorem{thm}{Theorem}
\newtheorem*{thm*}{Theorem}
\newtheorem{prop}[thm]{Proposition} 
\newtheorem{lem}[thm]{Lemma}
\newtheorem{defi}[thm]{Definition} 
\newtheorem{exam}[thm]{Example} 
\newtheorem{nota}[thm]{Notation} 

\newtheorem{rem}[thm]{Remark}

\newtheorem*{quest*}{Question}

\usepackage{bbm}

\newcommand{\vac}{\mathbbm{1}}

\newcommand{\la}[1]{\mathfrak{#1}}
\newcommand{\gr}{\mathrm{gr}}
\newcommand{\ring}[1]{\mathsf{#1}}
\newcommand{\ala}[1]{\widehat{\mathfrak{#1}}}
\newcommand{\VOA}[1]{\mathsf{#1}}

\usepackage{xspace}
\newcommand{\hp}{Hilbert--Poincar\'e\xspace}

\newcommand{\ZZ}{\mathbb{Z}}
\newcommand{\NN}{\mathbb{N}}

\newcommand{\CC}{\mathbb{C}}

\newcommand{\sC}{\mathscr{C}}
\newcommand{\sG}{\mathscr{G}}
\newcommand{\sD}{\mathscr{D}}
\newcommand{\sK}{\mathscr{K}}
\newcommand{\sP}{\mathscr{P}}
\newcommand{\sF}{\mathscr{F}}

\newcommand{\sX}{\mathscr{X}}

\DeclareMathOperator{\wt}{wt}

\newcommand{\lietype}[1]{\mathrm{#1}}

\newcommand{\lt}{\ell t}
\newcommand{\LT}{\mathrm{LT}}
\newcommand{\sh}{\mathrm{sh}}
\newcommand{\tr}{\mathsf{Tr}}

\newcommand{\circnum}[1]{
	\tikz[baseline]{
		\node[ellipse,draw,inner sep=1pt,minimum width=1em, minimum height=1em]  {$\scriptstyle{#1}$};}}

\newcommand{\ord}[2]{\underbracket{#1}_{\circnum{#2}}}

\title{Classical freeness of $\widehat{\mathfrak{sl}}_n$ at level $1$ via combinatorics}
\author{Shashank Kanade}
\address{University of Denver, Denver, USA}
\email{shashank.kanade@du.edu}
\subjclass{17B69, 11P84}
\allowdisplaybreaks

\begin{document}
	
	\begin{abstract}
		We use a family of Rogers--Ramanujan-type combinatorial identities of Dousse--Konan involving coloured partitions to prove classical freeness of the simple vertex operator algebras based on $\widehat{\mathfrak{sl}}_n$ at level $1$. These identities are used to produce Gr\"obner bases for the relevant arc algebras. 
	\end{abstract}

	\maketitle

	\section{Introduction}
	
	For nearly half a century, it has been widely recognized that Rogers--Ramanujan-type identities concerning integer partitions govern the structure of representations of affine Lie algebras and, more broadly, of vertex operator algebras (henceforth VOAs). This started with the work of Lepowsky--Milne \cite{LepMil} who first showed that the product sides in Rogers--Ramanujan identities are related to the principally specialized characters of level $3$ standard modules of $\ala{sl}_2$. Then, a few years later, Lepowsky--Wilson \cite{LepWil-pnasLieRR}, \cite{LepWil-pnasNewAlg}, \cite{LepWil-structI} constructed bases of relevant substructures of these modules enumerated by the ``difference $2$'' partitions counted by the sum-sides of Rogers--Ramanujan identities. Lepowsky--Wilson's work involved an independent discovery (and use) of vertex operators in the mathematical literature.
	
	In the last several decades, many new such identities have been found using Lepowsky--Wilson's framework, the first of which were Capparelli's identities \cite{Cap-id} related to $\lietype{A}_2^{(2)}$. Fascinating discoveries of intricate new identities continue to be made. A brief (and necessarily incomplete) summary of the developments pertaining roughly to the last decade could be found in \cite{Kan-survey}.
	
	On the other hand, knowledge of such identities can help establish structural properties of VOAs. One such structural property which has recently risen in prominence was first identified by Arakawa--Moreau \cite{AraMor-arc} and was called classical freeness by van Ekeren--Heluani \cite{vanEkeHel-chiralhom}: A VOA is called classically free if it is the quantization of the arc algebra of its associated scheme.

	It is then remarkable that this algebro-geometric property of classical freeness which has deep technical implications (for instance, to computations of chiral homology of elliptic curves as studied in \cite{vanEkeHel-chiralhom}) essentially reduces to purely combinatorial Rogers--Ramanujan-type identities. Indeed, this property is established if one can show that the Hilbert series of the arc algebra equals the character of the VOA. For a broad class of VOAs  including the ones considered here, the former is the generating function of coloured partitions with some frequency restrictions, and the latter is usually expressed in terms of theta functions of lattices. The equality of these two is precisely a Rogers--Ramanujan-type identity.
	
	Classical freeness (or the quantification of the failure thereof) in the following examples has been established combinatorially using Rogers--Ramanujan-type or $q$-series identities. We denote by $\VOA{L}_k(\la{g})$ the simple VOA based on a (simple) Lie algebra $\la{g}$ at level $k (\neq -h^\vee)\in\CC$.
	\begin{enumerate}[leftmargin=*]
		\item Virasoro minimal models at boundary admissible parameters $(2,2k+3)$ are classically free due to the Andrews--Gordon identities, see the work of Bruscheck--Mourtada--Schepers \cite{BruMouSch-rr} and van Ekeren--Heluani \cite{vanEkeHel-chiralhom}. There is a different circle of ideas relating the arc algebra here with the principal subspace of $\VOA{L}_k(\la{sl}_2)$ and this leads to another proof of classical freeness based on \cite{CapLepMil-rr}, \cite{CapLepMil-ag}, \cite{CalLepMil-rr}, \cite{CalLepMil-ag}.			
		
		\item Rational Virasoro minimal models beyond boundary admissible parameters are easily seen to \emph{not} be classically free \cite{vanEkeHel-chiralhom}. Characterizing the failure at parameters $(3,4)$ is again intimately tied with an exotic Rogers--Ramanujan-type identity found and proved in  \cite{AndvanEkeHel} by Andrews--van Ekeren--Heluani.
		
		\item The VOAs $\VOA{L}_k(\la{sl}_2)$ for $k\in\ZZ_{\geq 0}$ are classically free  \cite{vanEkeHel-chiralhom} due to the coloured partitions identities of Meurman--Primc \cite{MeuPri-annfields}.
		
		\item Linshaw--Song \cite{LinSong-invsympl} proved classical freeness of $\VOA{L}_k(\la{sp}_{2n})$ for $k\in\ZZ_{\geq 0}$ using invariant-theoretic methods.
		For $k=1$, the classical freeness here is also implied by the coloured partition identities
		due to Primc--\v{S}iki\'c \cite{PriSik-1}, \cite{PriSik-2}, \cite{PriSik-3}  and Primc--Trup\v{c}evi\'c \cite{PriTru} (see also \cite{CMPP}).  
		The proof of how these combinatorial identities are used for classical freeness at level $1$ is given by Li in \cite{Li-remarks}. The combinatorial level $1$ identities that are required here are also proved independently by Dousse--Konan \cite{DouKon-sp2n} and in the principal gradation by Russell \cite{Rus-CMPP}.
		
		\item Principal subspaces for $\VOA{L}_1(\la{sl}_n)$ (and a few other examples in other types) are classically free due to Li--Milas \cite{LiMil} where 
		certain quantum dilogarithm $q$-series identities  are used.
		Principal subspaces for certain graph vertex algebras are classically free due to Li \cite{Li-remarks}.
		\item Classical freeness of super VOAs based on $N=1$ minimal models at parameters $(2,4k)$ was established combinatorially by Li \cite{Li-remarks}.
	\end{enumerate} 
	It is important to remark here that Meurman--Primc's work \cite{MeuPri-annfields} on  $\VOA{L}_k(\la{sl}_2)$ ($k\in\ZZ_{\geq 0}$) achieves far more than what we have mentioned above. They actually produce and prove non-commutative Gr\"obner bases, and after this, one does the comparatively  easier task of transferring these to Gr\"obner bases of (commutative) arc algebras. Meurman--Primc extended their approach to $\VOA{L}_1(\la{sl}_3)$ in \cite{MeuPri-sl3}, and the work \cite{PriSik-1}, \cite{PriSik-2}, \cite{PriSik-3}, \cite{PriTru} on $\VOA{L}_k(\la{sp}_{2n})$ recalled above also follows the same framework.
	We expect that a similar transfer of the non-commutative bases of Primc--\v{S}iki\'c \cite{PriSik-2} and Primc--Trup\v{c}evi\'c \cite{PriTru}  to arc algebras should provide a new proof of classical freeness at higher levels for $\VOA{L}_k(\la{sp}_{2n})$.
	
	There do exist other approaches to classical freeness. Most prominently, the invariant-theoretic approach of Linshaw--Song \cite{LinSong-cosets}, \cite{LinSong-invsympl} culminating in the work of Creutzig--Linshaw--Song \cite{CreLinSong} establishes this property for super VOAs $\VOA{L}_n(\la{osp}(m|2r))$ with $-\frac{m}{2}+r+n+1>0$.  
	
	Despite all of this, classical freeness remains ``\emph{quite mysterious}'' \cite{CreLinSong}, and it is quite hard to tell if a given arbitrary VOA (especially, a rational VOA) is classically free. Several VOAs are known  \emph{not} to be classically free -- Virasoro minimal models beyond boundary parameters (as recalled above) \cite{vanEkeHel-chiralhom}, \cite{Sal-vir}, simple $\mathcal{W}_3$ algebra at central charge $c=-2$ \cite{AraLin-singular}, etc.
	Even among $\VOA{L}_1(\la{g})$, classical-freeness is quite puzzling. While $\VOA{L}_1(\la{sp}_{2n})$ case is classically free (see above), it has been proven recently by Song--Zeng \cite{SongZeng-E8} that $\VOA{L}_1(\lietype{E}_8)$ is not, providing a counter-example to a belief in the VOA literature that $\VOA{L}_1(\la{g})$ is classically free for all simple $\la{g}$ (cf.\ \cite{Fei-pbw}).
	
	In this paper, we establish classical freeness for $\VOA{L}_1(\la{sl}_n)$ (Theorem \ref{thm:main} below)  using a family of combinatorial identities of Dousse--Konan \cite{DouKon-slnI}, \cite{DouKon-slnII}. These identities were in turn found using sophisticated combinatorial constructions starting from the ``KMN$^2$'' \cite{KKMMNN} character formula involving crystal bases. This approach has the advantage that we are simultaneously able to produce a Gr\"obner basis for the arc algebra. Additionally, the link to Dousse--Konan's work opens up further natural avenues to exploit crystal character formulas in other types or at higher levels.

	The strategy for combinatorially establishing the classical freeness is straightforward to explain. We start running Buchberger's algorithm on the arc algebra. The natural starting generating set is formed by a specifically chosen basis of quadratic elements in the singular space and their derivatives.  Section \ref{sec:quad} is devoted to this calculation. In Section \ref{sec:cubic},  we then find that modulo this generating set, several $S$-polynomials do not reduce to $0$ (this was first observed in the $\VOA{L}_1(\la{sl}_3)$ level $1$ case  by Meurman--Primc \cite{MeuPri-sl3}). We enlarge the generating set by including these reductions whose leading terms are cubics.  
	At this point, the Hilbert series of leading terms found thus far matches the conditions in Dousse--Konan's identities \cite{DouKon-slnI}, however, this also requires quite a bit of work detailed in Section \ref{sec:dk}. The primary reason that this is not immediate is that the combinatorial framework used by Dousse--Konan rests on ``difference conditions'' while we need a description in terms of ``forbidden frequencies'' adapted to the leading terms in the (eventual) Gr\"obner bases. Once this is done, Dousse--Konan's  family of identities immediately gives us that the Hilbert series of the leading terms found thus far is exactly the character of $\VOA{L}_1(\la{sl}_n)$. This simultaneously establishes classical freeness and also proves that Buchburger's algorithm has terminated, i.e., that we have found a Gr\"obner basis for the arc algebra; see our main Theorem \ref{thm:main} presented in Section \ref{sec:main}. A few questions arising from this strategy are collected in Section \ref{sec:questions}.

	Despite the fact that this is a paper on VOAs, almost all of the arguments  presented here are purely Lie-theoretic, commutative-algebraic, or combinatorial in nature. Nonetheless, we shall need to recall definitions regarding VOAs in Section \ref{sec:voa} to set the stage and to introduce the objects involved. Definitions and results for Gr\"obner bases are recalled in Section \ref{sec:grobner}. The degree reverse lexicographic monomial ordering we use is clarified in Section \ref{sec:setup}. Once this is done, the bulk of the computations  follow as explained in the previous paragraph. 
	
	\subsection*{Acknowledgments} I am infinitely grateful to Mirko Primc for drawing my attention to the work of Dousse--Konan \cite{DouKon-slnI} and explaining how it relates to the work of Meurman--Primc \cite{MeuPri-sl3} in the $\VOA{L}_1(\la{sl}_3)$ case. I sincerely thank Jehanne Dousse for discussions on the work of Dousse--Konan \cite{DouKon-slnI} and \cite{DouKon-slnII}. 	
	It is a privilege to thank Andy Linshaw and Juan Villareal for discussions pertaining to classical freeness. 
	I am grateful to Bailin Song and  Xianlong Zeng for sharing with me a preprint of their recent work \cite{SongZeng-E8}.
	Finally, I acknowledge financial support from the Simons Foundation via the Travel Support for Mathematicians grant.

	\section{Classical freeness for vertex operator algebras}
	\label{sec:voa}
	In this section, we recall the notions of vertex operator algebras, Li's filtration, Zhu's $C_2$ algebra, its arc algebra, and ultimately, classical freeness.

	Let $\VOA{V}=\oplus_{n\in\ZZ}\VOA{V}_n$ be a $\ZZ$-graded vector space with finite-dimensional graded components with distinguished elements $\vac\in\VOA{V}_0$, $\omega\in\VOA{V}_2$, called the vacuum and the conformal vector, respectively.
	
	Suppose that $\VOA{V}$ is equipped with a state-field correspondence, i.e., a map 
	$$Y: \VOA{V}\rightarrow (\mathrm{End}(\VOA{V}))[[x,x^{-1}]],$$
	whose value on individual elements $v\in\VOA{V}$ is denoted as:
	\begin{align*}
		Y(v,x)=\sum_{n\in\ZZ}v_{(n)}x^{-n-1}\quad (v_{(n)}\in\mathrm{End}(\VOA{V})).
	\end{align*}
	The coefficients of $Y(\omega,x)$ are enumerated differently, and are given special names:
	\begin{align*}
		Y(\omega,x)=\sum_{n\in\ZZ}L_nx^{-n-2},
	\end{align*}
	equivalently, $L_n = \omega_{(n+1)}$.
	
	$\VOA{V}$ equipped with $Y$ is called a vertex operator algebra of central charge $c\in\CC$ if this data satisfies the following properties:
	\begin{enumerate}
		\item Lower truncation: For all $v,w\in\VOA{V}$, $Y(v,x)w\in \VOA{V}((x))$.
		\item Vacuum and creation: $Y(\vac,x)v=v$ for all $v\in \VOA{V}$, $Y(v,x)\vac \in  v + x\VOA{V}[[x]]$.
		\item Locality: For all $v,w\in \VOA{V}$, there exists an $N\in \NN$ such that $$(x_1-x_2)^N[Y(v,x_1),Y(w,x_2)]=0.$$
		\item $L_{-1}$ derivative property: For all $v\in\VOA{V}$, $$Y(L_{-1}v,x)=[L_{-1},Y(v,x)]=\dfrac{d}{dx}Y(v,x).$$
		\item $L_0$ grading property (conformal weights): For all $v\in \VOA{V}_n$, $L_0v=nv$.
		\item Conformal vector: The modes $L_n$ provide a representation of the Virasoro algebra, i.e.,  for all $m,n\in\ZZ$,
		$$[L_m,L_n]=(n-m)L_{m+n}+\dfrac{n^3-n}{12}\delta_{n+m=0}c.$$
	\end{enumerate}
	For alternate axiomatic definitions of VOAs, see \cite{LepLi}.
	
	Next, due to \cite{Li-abelianizing}, there is a  decreasing filtration $F_\bullet\VOA{V}$ on $\VOA{V}$ such that the associated graded $\gr_{F}\VOA{V}$ can be equipped with a canonical structure of a commutative, differentially-graded ring. Define:
	\begin{align*}
		F_p(\VOA{V})=\mathrm{Span}_\CC\{ u^{(1)}_{-k_1-1}\cdots u^{(t)}_{-k_t-1}\vac\,|\, t\geq 1, k_1,\cdots,k_r\geq 0, k_1+\cdots+k_r\geq p    \}.
	\end{align*}
	It is easy to see that
	\begin{align*}
		\VOA{V}=F_0(\VOA{V})\supseteq F_1(\VOA{V})\supseteq \cdots,
	\end{align*}
	and we let:
	\begin{align*}
		\gr_F \VOA{V}=\bigoplus_{p\geq 0} F_p(\VOA{V})/F_{p+1}(\VOA{V}).
	\end{align*}
	We let $\sigma_p$ be the corresponding principal symbol map, i.e., 
	$$\sigma_p: F_p(\VOA{V})\rightarrow F_p(\VOA{V})/F_{p+1}(\VOA{V})\hookrightarrow\gr_F(\VOA{V}).$$
	The associated graded $\gr_F(\VOA{V})$ equipped with the product
	$$\sigma_p(a)\sigma_q(b)=\sigma_{p+q}(a_{(-1)}b)$$
	for $a\in F_{p}\VOA{V}$, $b\in F_{q}\VOA{V}$ (and extended linearly) is a commutative ring with unity $\sigma_0(\vac)$. Further, the operator $L_{-1}$ gives rise to a canonical derivation of this ring:
	\begin{align*}
		\partial(\sigma_p(a))=\sigma_{p+1}(L_{-1}a).
	\end{align*}
	Thus, $\gr_F(\VOA{V})$ is a differential commutative ring.
	
	It is now clear that $F_0(\VOA{V})/F_1(\VOA{V})$ is a commutative subalgebra of $\gr_F(\VOA{V})$. It is in fact a Poisson algebra, with bracket defined by:
	\begin{align*}
		\{\sigma_0(a),\sigma_0(b)\}=\sigma_0(a_{(0)}b).
	\end{align*}
	This is precisely Zhu's $C_2$-algebra \cite{Zhu1996-modinv}, denoted as $R_{\VOA{V}}$:
	\begin{align}
		R_{\VOA{V}}=F_0(\VOA{V})/F_1(\VOA{V})\cong \VOA{V}/\mathrm{Span}\{a_{-2}b\,|\,a,b\in \VOA{V} \},
	\end{align}
	where the last isomorphism is as vector spaces.

	The first theorem is the following:
	\begin{thm}[{\cite[Cor.\ 4.3]{Li-abelianizing}}]
		\label{thm:JRVsurj-grV}
		As a differential algebra, $\gr_F(\VOA{V})$ is generated by $R_{\VOA{V}}$.
	\end{thm}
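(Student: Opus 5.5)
The plan is to show directly that every principal symbol $\sigma_p(w)$ with $w\in F_p(\VOA{V})$ is a polynomial in elements of the form $\partial^{k}\sigma_0(u)$. Here $u\in\VOA{V}$ and $k\geq 0$, so these are derivatives of elements of $R_{\VOA{V}}=F_0/F_1$. By the definition of $F_p(\VOA{V})$, it suffices to treat spanning elements
$$w=u^{(1)}_{-k_1-1}\cdots u^{(t)}_{-k_t-1}\vac,\qquad k_1+\cdots+k_t\geq p.$$
If $\sum k_i>p$ then $w\in F_{p+1}$ and $\sigma_p(w)=0$. So we may assume $\sum k_i=p$ exactly, and we will prove
$$\sigma_p(w)=\frac{1}{k_1!\cdots k_t!}\,\partial^{k_1}\sigma_0(u^{(1)})\cdots\partial^{k_t}\sigma_0(u^{(t)}).$$
This exhibits $\gr_F\VOA{V}$ as generated by $R_{\VOA{V}}$ as a differential algebra.

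The first ingredient is the one-factor identity. The $L_{-1}$-derivative property together with the creation axiom give $Y(u,x)\vac=e^{xL_{-1}}u$, and hence $u_{(-k-1)}\vac=\frac{1}{k!}L_{-1}^{k}u$. Using the definition $\partial\sigma_q(a)=\sigma_{q+1}(L_{-1}a)$ repeatedly, we get $\sigma_k(u_{(-k-1)}b)$ in the base case $b=\vac$:
$$\sigma_k(u_{(-k-1)}\vac)=\frac{1}{k!}\partial^k\sigma_0(u).$$
Here we need $L_{-1}F_q\subseteq F_{q+1}$. This follows from $[L_{-1},a_{(n)}]=-n\,a_{(n-1)}$ and $L_{-1}\vac=0$: commuting $L_{-1}$ through a monomial lowers one mode index, which raises the filtration degree by one.

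The second ingredient is the product identity $\sigma_{k+q}(u_{(-k-1)}b)=\sigma_k(u_{(-k-1)}\vac)\,\sigma_q(b)$ for $b\in F_q$. By the definition of the product, the right-hand side is $\sigma_{k+q}\big((u_{(-k-1)}\vac)_{(-1)}b\big)$. We then expand $(u_{(-k-1)}\vac)_{(-1)}=\frac{1}{k!}(L_{-1}^ku)_{(-1)}$. Since $Y(L_{-1}^ku,x)=\partial_x^kY(u,x)$, this equals $\binom{-1}{k}(-1)^k\cdot\frac{1}{k!}k!\,u_{(-k-1)}$, which is just $u_{(-k-1)}$. So the identity holds on the nose, not merely modulo $F_{k+q+1}$.

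With both ingredients, an induction on $t$ gives the displayed formula. We peel off the leftmost mode with $b=u^{(2)}_{-k_2-1}\cdots\vac\in F_{k_2+\cdots+k_t}$. The main subtlety is bookkeeping rather than depth. One must check that the product on $\gr_F$ is well defined, i.e.\ that $F_p{}_{(-1)}F_q\subseteq F_{p+q}$. This is a standard input from \cite{Li-abelianizing} that the paper uses implicitly when it defines the ring structure, so we may assume it. We must also watch for the normalization of the mode indices $u_{-k-1}$ versus $u_{(-k-1)}$, which is harmless since only the filtration degree matters.
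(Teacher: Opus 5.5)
Your argument is correct. The paper gives no proof of this statement and only cites Li, and your argument is essentially Li's standard one: the spanning set of $F_p(\VOA{V})$, together with $L_{-1}F_q(\VOA{V})\subseteq F_{q+1}(\VOA{V})$ and the exact identity $\bigl(\tfrac{1}{k!}L_{-1}^ku\bigr)_{(-1)}=u_{(-k-1)}$, gives $\sigma_p\bigl(u^{(1)}_{(-k_1-1)}\cdots u^{(t)}_{(-k_t-1)}\vac\bigr)=\prod_{i}\tfrac{1}{k_i!}\partial^{k_i}\sigma_0(u^{(i)})$ whenever $\sum_i k_i=p$. The one genuinely nontrivial input is that $a_{(-1)}b\in F_{p+q}(\VOA{V})$ for $a\in F_p(\VOA{V})$, $b\in F_q(\VOA{V})$, which makes the product on $\gr_F\VOA{V}$ well defined; you may legitimately take it from Li, as the paper itself does when it puts the ring structure on $\gr_F\VOA{V}$.
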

	
	It is now natural to ask the following question: how big is the resulting object if we attach a derivation to $R_{\VOA{V}}$ in the freest possible way? Clearly, due to the theorem just quoted, this object is at least as big as $\gr_F{\VOA{V}}$. $\VOA{V}$  is said to be classically free if this object is exactly as big as $\gr_F(\VOA{V})$. We now define this formally.
	
	\begin{defi}[{\cite[Prop.\ 2.16]{SongZeng-E8}}]
		Given a commutative algebra $R$ over $\CC$, there exists a unique up to isomorphism commutative differential algebra
		$J_{\infty}R$ over $\CC$ satisfying the following universal property. For any differential commutative algebra $A$ over $\CC$, we have:
		$$\hom_{\mathcal{D}}(J_{\infty}R,A)\cong \hom_{\mathcal{C}}(R,A),$$
		where $\mathcal{D}$ is the category of differential commutative algebras over $\CC$ and $\mathcal{C}$ is the category of commutative algebras over $\CC$. We call $J_{\infty}R$ the arc algebra of $R$.
	\end{defi}
	
	Theorem \ref{thm:JRVsurj-grV} implies that we have a natural surjection of differential commutative algebras:
	\begin{align}
		J_{\infty}R_{\VOA{V}}\twoheadrightarrow\gr_F(\VOA{V}).\label{eqn:JRgrVsurj}
	\end{align}
	
	\begin{defi}
		$\VOA{V}$ is called classically free if \eqref{eqn:JRgrVsurj} is an isomorphism.
	\end{defi}

	When $R\cong \CC[x_1,\cdots,x_N]/\langle f_1,\cdots, f_M\rangle$,  $J_{\infty}R$ maybe taken to be (see \cite{SongZeng-E8}):
	\begin{align}
		J_\infty R = \dfrac{\CC[\partial^jx_i\,|\, 1\leq i\leq N, j\geq 0]}{\langle \partial^jf_i\,|\,1\leq i\leq M, j\geq 0\rangle}.
		\label{eqn:JRaff}
	\end{align}

	While $\gr_{F}(\VOA{V})$ has a natural gradation by filtration degree, we need another gradation. It is a consequence of the axioms that each $F_p(\VOA{V})$ is also graded by conformal weight $L_0$. Consequently, $\gr_F(\VOA{V})$ and $R_{\VOA{V}}$ are also graded by $L_0$. Assigning the weight $1$ to $\partial$, $J_{\infty}R_{\VOA{V}}$ is also graded (we again call this the conformal grading), and the map \eqref{eqn:JRgrVsurj} is a grade-preserving surjection.
	Let us create the (graded) characters (or the Hilbert series) of the objects involved:
	\begin{align*}
		\chi_{X}(q)=\sum_{n\in\ZZ}\dim(X_{n})q^n,
	\end{align*}
	where $X\in\{J_{\infty}R_{\VOA{V}}, \gr_F(\VOA{V}), \VOA{V}\}$,  $X_{n}$ denotes the homogeneous space of $X$ with conformal weight $n$, and $q$ is a formal variable.

	It is known that 
	\begin{align}
		\chi_{\gr_{F}\VOA{V}}(q)=\chi_{\VOA{V}}(q).
		\label{eqn:chgr=chV}
	\end{align}
	In terms of characters, \eqref{eqn:JRgrVsurj} implies the coefficient-wise inequality
	\begin{align}
		\chi_{J_{\infty}R_{\VOA{V}}}(q)\geq \chi_{\VOA{V}}(q).
		\label{eqn:chJR>=chV}
	\end{align}
	A consequence of this is the following. 
	\begin{thm}
	\label{thm:charclassfree}		
	A vertex operator algebra $\VOA{V}$ is classically free iff
	\begin{align}
		\chi_{J_{\infty}R_{\VOA{V}}}(q)=\chi_{\VOA{V}}(q).
		\label{eqn:cfiffchareq}
	\end{align}
	\end{thm}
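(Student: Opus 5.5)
The plan is to reduce the statement to a comparison of dimensions in each conformal weight, using the surjection \eqref{eqn:JRgrVsurj} together with \eqref{eqn:chgr=chV}. The first step is to record that the map $\pi: J_{\infty}R_{\VOA{V}}\twoheadrightarrow\gr_F(\VOA{V})$ preserves the conformal grading. Hence, for each $n$, it restricts to a surjection of homogeneous components
\begin{align*}
\pi_n:(J_{\infty}R_{\VOA{V}})_n\twoheadrightarrow \gr_F(\VOA{V})_n ,
\end{align*}
and $\pi$ is an isomorphism if and only if every $\pi_n$ is injective.

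For the forward direction, assume $\VOA{V}$ is classically free, so $\pi$ is an isomorphism of graded algebras. Then each $\pi_n$ is a linear isomorphism and $\dim (J_{\infty}R_{\VOA{V}})_n=\dim\gr_F(\VOA{V})_n$. Since $\gr_F(\VOA{V})_n=\bigoplus_p F_p(\VOA{V})_n/F_{p+1}(\VOA{V})_n$, this dimension equals $\dim \VOA{V}_n$ by \eqref{eqn:chgr=chV}. Concretely, this uses that $F_p(\VOA{V})_n=0$ for $p$ large, which holds because a nonzero vector $u_{-k-1}$-monomial applied to $\vac$ with total $\sum k_i\ge p$ has weight at least $p$ plus the weights of the $u^{(i)}$. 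This requires that $\VOA{V}$ has no negative-weight generators; that is the standing CFT-type/positivity assumption implicit in \eqref{eqn:chgr=chV}, and I take it as given. Comparing coefficients then yields \eqref{eqn:cfiffchareq}.

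For the converse, assume \eqref{eqn:cfiffchareq}. Then $\dim (J_{\infty}R_{\VOA{V}})_n=\dim \VOA{V}_n=\dim \gr_F(\VOA{V})_n$ for every $n$. Since $\VOA{V}_n$ is finite-dimensional, $\pi_n$ is a surjection between finite-dimensional spaces of equal dimension, hence injective. Therefore $\pi=\bigoplus_n\pi_n$ is bijective. Being a morphism of differential algebras, it is an isomorphism of differential algebras, and so $\VOA{V}$ is classically free.

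The only point needing care is finiteness: the argument requires each graded piece of $J_{\infty}R_{\VOA{V}}$ to be finite-dimensional, or at least that the equality of formal series is meaningful. Under \eqref{eqn:cfiffchareq} this finiteness is automatic. In the forward direction, the isomorphism transfers finiteness from $\gr_F(\VOA{V})_n$, whose dimension is finite by \eqref{eqn:chgr=chV}. So no genuine obstacle arises beyond invoking \eqref{eqn:chgr=chV} and the grading compatibility of $\pi$.
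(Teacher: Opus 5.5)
Your proposal is correct and follows essentially the paper's route: the paper presents the theorem as an immediate consequence of the grade-preserving surjection \eqref{eqn:JRgrVsurj} together with \eqref{eqn:chgr=chV}, and that is precisely the degreewise dimension comparison you carry out, using finite-dimensionality of each weight space. Your remarks on the filtration vanishing in high degree only justify \eqref{eqn:chgr=chV}, which the paper simply quotes as known.
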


	Now we come to the class of vertex operator algebras of our interest.
	
	Let $\la{g}$ be a simple finite-dimensional Lie algebra over $\CC$, with a non-zero invariant symmetric bilinear form $\langle\cdot,\cdot\rangle$ normalized so that the longest root has squared length $2$. We will work with $\la{g}=\la{sl}_n$ with $\langle X,Y\rangle = \mathrm{Tr}(XY)$ throughout.
	From this we can form the affine Lie algebra:
	$$\ala{g}=\la{g}\otimes \CC[t,t^{-1}]\oplus \CC K,$$
	where $K$ is central and the brackets are defined as:
	$$[X\otimes t^n,Y\otimes t^m]=[X,Y]\otimes t^{n+m}+n\langle X,Y\rangle \delta_{n+m=0}K.$$
	Given a $k\in\CC$ (called the level), form the generalized Verma module:
	$$\VOA{V}_k(\la{g})=\mathrm{Ind}_{\la{g}[t]\oplus \CC K }^{\ala{g}}\CC_k,$$
	where $K$ acts on $\CC_k$ by $k$ and $\la{g}[t]$ acts trivially. As a $\ala{g}$ module, $\VOA{V}_k(\la{g})$ has a unique maximal ideal $\VOA{I}_k(\la{g})$ that does not intersect $\CC_k$, and the quotient is denoted as:
	$$\VOA{L}_k(\la{g})=\VOA{V}_k(\la{g})/\VOA{I}_k(\la{g}).$$
	Very importantly, for all $k\neq -h^\vee$, $\VOA{V}_k(\la{g})$ has a canonical vertex operator algebra structure, which descends to a vertex operator algebra structure on $\VOA{L}_k(\la{g})$
	as described in \cite{LepLi}. This fact was first established by Frenkel--Zhu \cite{FreZhu}, but an alternate proof could be also found in \cite{MeuPri-annfields}.

	We now describe Zhu's $C_2$ algebra for $R_{\VOA{L}_k(\la{g})}$. We know that the symmetric algebra $\mathrm{Sym}(\la{g})$ has a natural adjoint action of $\la{g}$, which we denote by $\mathrm{ad}$. Let $\theta$ be the highest root of $\la{g}$ and $e_{\theta}$ the corresponding root vector.
	\begin{thm} We have (see for instance \cite{GabGan}, \cite{FeiFeiLit-c2}):
	\begin{align}
		R_{\VOA{L}_k(\la{g})}=\dfrac{\mathrm{Sym}(\la{g})}{\langle
			\mathrm{ad}(\la{U}(\la{g}))\cdot e_{\theta}^{k+1} \rangle }.
			\label{eqn:RLgk}
	\end{align}
	\end{thm}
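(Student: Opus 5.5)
The plan is to compute $R_{\VOA{L}_k(\la{g})}$ directly from the definition $R_{\VOA{V}}\cong \VOA{V}/C_2(\VOA{V})$, where $C_2(\VOA{V})=\mathrm{Span}\{a_{-2}b\}$. The work has two parts: first identify $R_{\VOA{V}_k(\la{g})}$ with $\mathrm{Sym}(\la{g})$, and then find the image of the maximal ideal $\VOA{I}_k(\la{g})$ in it.

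\textbf{Step 1: the universal algebra.} By PBW, $\VOA{V}_k(\la{g})$ has basis $x^{(1)}_{-n_1}\cdots x^{(t)}_{-n_t}\vac$ with $n_i\geq 1$. Since $(x_{(-2)}v)_{(m)}$ is a combination of modes of $x$ and $v$, the subspace $C_2(\VOA{V}_k(\la{g}))$ is spanned by the PBW monomials containing at least one mode $x_{-n}$ with $n\geq 2$.

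Hence $R_{\VOA{V}_k(\la{g})}$ has basis given by the images of $x^{(1)}_{-1}\cdots x^{(t)}_{-1}\vac$. Commutators of $(-1)$-modes are $(-2)$-modes, which lie in $C_2$, so these images commute. The product $\sigma_0(a)\sigma_0(b)=\sigma_0(a_{(-1)}b)$ then gives an isomorphism $\mathrm{Sym}(\la{g})\cong R_{\VOA{V}_k(\la{g})}$ sending $x\mapsto x_{-1}\vac$.

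\textbf{Step 2: exactness.} The functor $\VOA{V}\mapsto R_{\VOA{V}}$ is right exact on quotients by ideals. Therefore
$$R_{\VOA{L}_k(\la{g})}=\mathrm{Sym}(\la{g})/\overline{\VOA{I}_k(\la{g})},$$
where $\overline{\VOA{I}_k(\la{g})}$ is the image of $\VOA{I}_k(\la{g})$ in $R_{\VOA{V}_k(\la{g})}$. This image is a Poisson ideal, and in particular it is $\mathrm{ad}(\la{g})$-stable, because the zero modes $x_{(0)}$ act by the adjoint action.

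\textbf{Step 3: the singular vector.} For $k\in\ZZ_{\geq 0}$ (the case relevant to us, with $k=1$), $\VOA{I}_k(\la{g})$ is generated as a $\ala{g}$-module by the singular vector $e_\theta(-1)^{k+1}\vac$ (Kac). Its image in $\mathrm{Sym}(\la{g})$ is $e_\theta^{k+1}$.

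\textbf{Step 4: the hard inclusion.} One must show that the whole image of $\VOA{I}_k(\la{g})=U(\ala{g})\cdot e_\theta(-1)^{k+1}\vac$ is contained in the ideal generated by $\mathrm{ad}(U(\la{g}))e_\theta^{k+1}$. I expect this to be the main obstacle.

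The approach is to write $U(\ala{g})=U(\la{g}\otimes t^{-1}\CC[t^{-1}])\,U(\la{g})\,U(\la{g}\otimes t\CC[t])$ and treat the three factors in turn.
\begin{itemize}
\item Positive modes $x_{(m)}$ with $m\geq 1$ act on $\VOA{V}_k(\la{g})$ modulo $C_2$ in a controlled way. Since the vector is singular, they kill it outright.
\item The $\la{g}$-zero modes give exactly $\mathrm{ad}(U(\la{g}))e_\theta^{k+1}$.
\item Negative modes $x_{(-1)}$ act as multiplication by $x$ in $R$. Modes $x_{(-n)}$ with $n\geq 2$ land in $C_2$.
\end{itemize}
The subtlety is that $C_2$ is not preserved by positive modes. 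To avoid this, I would work with Li's filtration $F_\bullet$ and order the PBW monomials so that positive modes act first. Alternatively, I would use the identity $(a_{(-2)}b)=\partial a_{(-1)}b$ together with the fact that $F_1$ is an ideal for the $(-1)$-product and is stable under all modes $x_{(m)}$ for $m\geq 0$. Either route shows that the image of $U(\ala{g})v$ in $F_0/F_1$ equals the Poisson ideal generated by $\sigma_0(v)$.

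Combining Steps 1–4 gives \eqref{eqn:RLgk}. Finally, the Poisson ideal generated by $e_\theta^{k+1}$ in $\mathrm{Sym}(\la{g})$ coincides with the ordinary ideal generated by $\mathrm{ad}(U(\la{g}))e_\theta^{k+1}$, because the Poisson bracket with $x\in\la{g}$ is $\mathrm{ad}(x)$, extended by the Leibniz rule.
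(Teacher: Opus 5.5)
The paper gives no proof of this statement; it only cites Gaberdiel--Gannon and Feigin--Feigin--Littelmann. Your argument is the standard one behind those references, and it is correct. It combines Kac's theorem that, for $k\in\ZZ_{\geq 0}$, the maximal ideal is generated by $e_\theta(-1)^{k+1}\vac$, with a computation of the image of that ideal in $\VOA{V}/C_2$. So you supply a self-contained proof where the paper outsources it.

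There is one caveat, in Step 4. Use only your first route. By PBW and the fact that $\la{g}\otimes t\CC[t]$ annihilates $v$, you have $U(\ala{g})v=U(\la{g}\otimes t^{-1}\CC[t^{-1}])\,U(\la{g})\,v$. So positive modes never act on anything in $C_2$. All you then need is:
\begin{itemize}
\item $a_{(-n)}w\in C_2$ for $n\geq 2$;
\item $C_2$ is stable under $a_{(-1)}$ and $a_{(0)}$. This follows from $[a_{(m)},b_{(-2)}]=\sum_{j\geq 0}\binom{m}{j}(a_{(j)}b)_{(m-2-j)}$, whose modes are all $\leq -2$ when $m\in\{-1,0\}$.
\end{itemize}
Li's filtration is not needed.

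Discard your ``alternative'' route. It is false that $F_1$ is stable under $x_{(m)}$ for $m\geq 1$. For example, $x_{(1)}y_{(-2)}\vac=[x,y]_{(-1)}\vac\notin F_1$ whenever $[x,y]\neq 0$. This also contradicts your own preceding sentence, since $F_1=C_2$.

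Similarly, positive modes do not act ``modulo $C_2$ in a controlled way''. They do not descend to $R$ at all, and are harmless only because they kill $v$.

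Finally, you implicitly restrict to $k\in\ZZ_{\geq 0}$. That is right: it is the only case in which the displayed formula makes sense.
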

	As a $\la{g}$-module, $\mathrm{ad}(\la{U}(\la{g}))\cdot e_{\theta}^{k+1}$ is finite-dimensional irreducible with highest weight $(k+1)\theta$.
	Thus, $J_{\infty}R_{\VOA{L}_k(\la{g})}$ is of the form \eqref{eqn:JRaff} with $\{x_i\}$ being a basis of $\la{g}$ and $\{f_i\}$ being a basis of $\mathrm{ad}(\la{U}(\la{g}))\cdot e_{\theta}^{k+1}$.

	\section{\texorpdfstring{Gr\"obner}{Groebner} bases}
	\label{sec:grobner}
	Let $\ring{R}=\CC[X^{(1)},X^{(2)},\cdots]$. We suppose that each $X^{(i)}$ is assigned a positive integral weight denoted by $\wt(X^{(i)})$ such that 
	\begin{align*}
		\wt(X^{(i)})&\leq \wt(X^{(i+1)}), \qquad \lim_{i\rightarrow\infty}\wt(X^{(i)})=\infty.
	\end{align*}
	The first will ensure that the grevlex order (defined below) is compatible with weights and the second will ensure that the Hilbert series of various objects are well-defined.
	
	An element of the sort 
	$$m=\prod_{i\geq 1} (X^{(i)})^{\alpha_i} \in \ring{R}$$
	where $\alpha_i\in\ZZ_{\geq 0}$ ($i\geq 1$) with finitely many $\alpha_i$ non-zero is called a \emph{monomial}.
	The weight of $m$ is defined as:
	\begin{align*}
		\wt(m)=\sum_{i\geq 1}\wt(X^{(i)})\alpha_i.
	\end{align*}
	It is convenient to express a monomial $a$ succinctly as:
	\begin{align*}
		m=\boldsymbol{X}^{\boldsymbol{\alpha}}.
	\end{align*}
	
	We now define the grevlex (degree reverse lexicographic) order. 
	\begin{defi}
	\label{def:grevlex}	
	We say that 
	\begin{align*}
		\prod_{i\geq 1} \boldsymbol{X}^{\boldsymbol{\alpha}}\succ
		\prod_{i\geq 1} \boldsymbol{X}^{\boldsymbol{\beta}}
	\end{align*}
	whenever
	\begin{enumerate}
		\item $\wt(\boldsymbol{X}^{\boldsymbol{\alpha}})>\wt(\boldsymbol{X}^{\boldsymbol{\beta}})$, or,
		\item $\wt(\boldsymbol{X}^{\boldsymbol{\alpha}})=\wt(\boldsymbol{X}^{\boldsymbol{\beta}})$ but the \emph{right-most} non-zero element of $\boldsymbol{\alpha}-\boldsymbol{\beta}$ is \emph{negative}.
	\end{enumerate}
	\end{defi}
	The following properties of this order are easy to prove:
	\begin{enumerate}
	\item $1\succeq \boldsymbol{X}^{\boldsymbol{\alpha}}$ with equality iff  $\alpha= \boldsymbol{0}$,
	\item $\boldsymbol{X}^{\boldsymbol{\alpha}}\succeq \boldsymbol{X}^{\boldsymbol{\beta}}$ implies 
	$\boldsymbol{X}^{\boldsymbol{\alpha}}\boldsymbol{X}^{\boldsymbol{\gamma}}\succeq \boldsymbol{X}^{\boldsymbol{\beta}}\boldsymbol{X}^{\boldsymbol{\gamma}}$.
	\end{enumerate}
	
	\begin{defi}
	\label{def:term}
	Given $f\in \ring{R}\backslash \{0\}$,
	a \emph{term} of $f$ is a monomial that appears in $f$ along with its coefficient.
	The leading term $\lt(f)$ is the term corresponding to the highest monomial of $f$.
	\end{defi}
	
	\begin{defi}
	The \emph{shape} of a monomial is defined as 
	\begin{align*}
		\sh\left(\boldsymbol{X}^{\boldsymbol{\alpha}}\right)
		=\prod_{i\geq 1}  w_{\wt(X^{(i)})}^{\alpha_i}\in\CC[w_1,w_2,\cdots]
	\end{align*}
	\end{defi}
	
	The importance of this definition is this: if we assume a grevlex order on $\CC[w_{1},w_{2},\cdots]$ with $\wt(w_{i})=i$, it is clear that for two monomials $a$, $b$ of $\ring{R}$, $\sh(a)\succ\sh(b)$ implies $a\succ b$.

	Given a subset $G\subset \ring{R}\backslash\{0\}$ we let $\LT(G)$ be the ideal generated by the leading terms of elements of $G$.

	Let $\ring{I}$ be any ideal of $\ring{R}$ that is generated by weight homogeneous elements. Then,  $\ring{R}/\ring{I}$ is also graded by weight, with each weight space finite-dimensional.

	For a weight-homogeneous ideal $\ring{I}\subseteq \ring{R}$, and $q$ a formal variable, we define the \hp series to be:
	\begin{align*}
		H_{\ring{I}}(q)=\sum_{n\geq 0}(\ring{R}/\ring{I})_nq^n.
	\end{align*}
	
	We now have the following fundamental property:
	\begin{thm}[{\cite[Ch.\ 9 \S 3, Prop.\ 9]{CoxLitOSh}}]
		For a homogeneous ideal $\ring{I}\subseteq\ring{R}$, 
		\begin{align*}
			H_{\ring{I}}(q) = H_{\LT(\ring{I})}(q).
		\end{align*}
	\end{thm}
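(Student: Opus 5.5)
The plan is to show, weight by weight, that the images of the standard monomials form a basis of $\ring{R}/\ring{I}$. Call a monomial \emph{standard} if it does not lie in $\LT(\ring{I})$. Then $\ring{R}/\LT(\ring{I})$ has a basis given by the images of the standard monomials, since $\LT(\ring{I})$ is a monomial ideal. Consequently $\dim(\ring{R}/\LT(\ring{I}))_n$ is the number of standard monomials of weight $n$. If the standard monomials also give a basis of $(\ring{R}/\ring{I})_n$, the two series agree coefficientwise.

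Fix a weight $n$. Because $\wt(X^{(i)})\to\infty$ and all weights are positive, only finitely many variables have weight at most $n$. Hence $\ring{R}_n$ is finite-dimensional and spanned by finitely many monomials. Since $\ring{I}$ is generated by homogeneous elements, $\ring{I}=\bigoplus_n \ring{I}_n$ and $(\ring{R}/\ring{I})_n=\ring{R}_n/\ring{I}_n$. Similarly, $\LT(\ring{I})_n$ is spanned by the monomials of weight $n$ lying in $\LT(\ring{I})$.

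\emph{Spanning.} I will run a division-type argument on $f\in\ring{R}_n$. If $f$ has a term whose monomial is non-standard, that monomial lies in $\LT(\ring{I})$. It is therefore divisible by $\lt(g)$ for some $g\in\ring{I}$, and we may take $g$ homogeneous, since the leading term of $g$ is the leading term of one of its homogeneous components (the order compares weight first). So the monomial equals $\lt(m'g)$ for some monomial $m'$, by multiplicativity of the order. Subtracting a scalar multiple of $m'g\in\ring{I}_n$ removes that term and introduces only smaller monomials of weight $n$. There are finitely many monomials of weight $n$, so this terminates and shows $f$ is congruent modulo $\ring{I}_n$ to a combination of standard monomials.

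\emph{Independence.} Suppose a nonzero combination $h$ of standard monomials lies in $\ring{I}$. Then $\lt(h)\in\LT(\ring{I})$, and its monomial is one of the standard monomials, which is a contradiction.

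The only subtle point is that $\LT(\ring{I})$ is generated by leading terms of \emph{all} elements of $\ring{I}$, not necessarily homogeneous ones. I handle this with the remark already made: since grevlex refines weight, the leading term of any element equals the leading term of its top-weight homogeneous component, which lies in $\ring{I}$ because $\ring{I}$ is homogeneous. Infinitely many variables cause no trouble, since everything is done within a fixed weight.
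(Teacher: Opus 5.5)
Your proof is correct, and it is essentially the standard degree-by-degree leading-term argument behind the cited result of Cox--Little--O'Shea; the paper gives no proof of its own, only the citation, and you phrase the count through standard monomials in the quotient rather than through leading monomials of $\ring{I}_n$. Working inside a fixed weight, where $\ring{R}_n$ is finite-dimensional because $\wt(X^{(i)})\to\infty$, is exactly what carries that finitely-generated, standard-graded statement over to the weighted ring $\ring{R}$ with infinitely many variables, which the paper uses without comment.
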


	\begin{defi}[{\cite[Ch.\ 2 \S 5, Def.\ 5]{CoxLitOSh}}]
		Let $\ring{I}$ be an ideal of $\ring{R}$ generated by 
		a set $G\subseteq \ring{I}\backslash\{0\}$. We say that $G$ is a Gr\"obner basis of $\ring{I}$ if 
		$\LT(G)=\LT(\ring{I})$.
	\end{defi}
	
	Given $f,g,h\in \ring{R}$ with $g\neq 0$, we say that $f$ reduces to $h$ modulo $\{g\}$ if $\lt(g)$ divides a term $X$ in $f$ and
	$$h=f-\dfrac{X}{\lt(g)}g.$$
	We denote this as:
	\begin{align*}
		f\xrightarrow{\{g\}}h.
	\end{align*}
	Given $f,h\in\ring{R}$ and $G\subseteq \ring{R}\backslash\{0\}$, we say that $f$ reduces to $h$ modulo $G$ denoted
	\begin{align*}
		f\xrightarrow{G}h
	\end{align*}
	if there exists a finite sequence $g_1,\cdots, g_i\in G$ and elements $h_1,\cdots h_{i-1}\in \ring{R}$ such that:
	\begin{align*}
		f\xrightarrow{\{g_1\}}h_1\xrightarrow{\{g_2\}}h_2\xrightarrow{}\cdots
		\xrightarrow{} 
		h_{i-1}\xrightarrow{\{g_i\}} h.
	\end{align*}

	Let $f,g\in \ring{R}\backslash \{0\}$, and let $L$ be the least common multiple of their highest \emph{monomials} (i.e., we ignore the coefficients of these monomials). We define:
	\begin{align}
		S(f,g)&=\dfrac{L}{\lt(f)}f-\dfrac{L}{\lt(g)}g.
	\end{align}
	
	\begin{thm}[Buchberger's Criterion {\cite[Ch.\ 2 \S 6, Thm.\ 6]{CoxLitOSh}}]
		Let $\ring{I}$ be a homogeneous ideal of $\ring{R}$ generated by  a
		set $G\subseteq \ring{I}\backslash\{0\}$.
		Then, $G$ is a Gr\"obner basis for $\ring{I}$ if and only if for all $f,g\in G$, $S(f,g)\xrightarrow{G}0$.
	\end{thm}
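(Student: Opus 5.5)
The plan is to follow the standard proof of Buchberger's criterion, as in \cite[Ch.\ 2 \S 6]{CoxLitOSh}, while checking at each step that having infinitely many variables in $\ring{R}$ causes no trouble. First I would show that the grevlex order of Definition \ref{def:grevlex} is a well-ordering on monomials. In a strictly decreasing sequence of monomials the weights form a non-increasing sequence of non-negative integers, so they are eventually constant. Moreover, there are only finitely many monomials of a fixed weight, because $\wt(X^{(i)})\to\infty$ and all weights are positive. Hence every strictly decreasing sequence is finite. From this it follows that any chain of one-step reductions $f\xrightarrow{\{g_1\}}h_1\xrightarrow{\{g_2\}}\cdots$ in which we always cancel a term must terminate. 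This gives a division algorithm: for any $f$ we obtain $f=\sum_j a_j g_j + r$, a finite sum with $g_j\in G$ and $\lt(a_jg_j)\preceq \lt(f)$, where no term of $r$ is divisible by any $\lt(g)$ with $g\in G$. Dickson's lemma is never needed, since nothing requires $G$ to be finite, and every individual computation involves only finitely many polynomials.

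For the forward implication, suppose $G$ is a Gr\"obner basis. Then $S(f,g)\in\ring{I}$. Dividing it by $G$ gives a remainder $r\in\ring{I}$, none of whose terms lies in $\LT(G)=\LT(\ring{I})$. Since $\LT(\ring{I})$ is a monomial ideal, if $r\neq 0$ then its leading monomial would lie in it, which is a contradiction. So $r=0$, i.e.\ $S(f,g)\xrightarrow{G}0$.

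For the converse, take $f\in\ring{I}$ and write $f=\sum_{i=1}^s h_ig_i$ with $g_i\in G$. Among all such representations, choose one minimizing $\delta=\max_i \mathrm{lm}(h_ig_i)$; a minimum exists by the well-ordering. Clearly $\mathrm{lm}(f)\preceq\delta$. If equality holds, then $\mathrm{lm}(f)$ is divisible by some $\mathrm{lm}(g_i)$, so $\mathrm{lm}(f)\in\LT(G)$, which is what we want. If $\delta\succ \mathrm{lm}(f)$, the top parts cancel: the sum of the terms $\lt(h_i)g_i$ over those $i$ with $\mathrm{lm}(h_ig_i)=\delta$ has all monomials below $\delta$.

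The key step, and the main technical point, is the cancellation lemma. It says that a combination $\sum c_i\boldsymbol{X}^{\boldsymbol{\alpha}_i}g_i$ whose summands all have leading monomial $\delta$, and whose leading terms cancel, is a linear combination of shifted S-polynomials $\boldsymbol{X}^{\boldsymbol{\gamma}_{ij}}S(g_i,g_j)$, each with leading monomial $\prec\delta$. This is pure linear algebra, using the multiplicativity property (2) of the order. Now the hypothesis $S(g_i,g_j)\xrightarrow{G}0$, run through the division algorithm, gives $S(g_i,g_j)=\sum_k a_kg_k$ with $\mathrm{lm}(a_kg_k)\preceq \mathrm{lm}(S(g_i,g_j))$. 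Substituting these expressions back produces a new representation of $f$ whose maximum monomial is $\prec\delta$, contradicting minimality. Hence $\LT(\ring{I})\subseteq\LT(G)$, and the reverse inclusion is trivial, so $G$ is a Gr\"obner basis.

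I expect the only places needing real care beyond the classical finite-variable proof to be the well-ordering (and thus termination) argument, and the bookkeeping in the cancellation lemma.
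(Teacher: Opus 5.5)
Your proposal is correct, and it is the standard argument from \cite[Ch.\ 2 \S 6]{CoxLitOSh}; that citation is all the paper offers here, since it states the criterion without proof. Your one real addition is to derive the well-ordering of the grevlex order, and hence termination of reduction, directly from the finiteness of the set of monomials of bounded weight rather than from Dickson's lemma. That is exactly the point needed for the cited finite-variable proof to carry over unchanged to the infinitely generated ring $\ring{R}$.
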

	
	The main engine for this paper is the following lemma.
	
	\begin{lem}
	\label{lem:main}
	Let $\ring{I}$ be a weight-homogeneous ideal, let $G\subseteq \ring{I}\backslash \{0\}$ be a generating set for $\ring{I}$, each element of which is weight-homogeneous. Let 
	$$\ring{K}=\langle\lt(g)\,|\,g\in G\rangle.$$ 
	If we have the coefficient-wise inequality 
	\begin{align*}
		H_{\ring{K}}(q)
		&\leq H_{\ring{I}}(q),
	\end{align*}
	then,
	\begin{enumerate}
		\item we have $\chi(q)=H_{\ring{I}}(q)$,
		\item $G$ is a Gr\"obner basis for $\ring{I}$. 
	\end{enumerate}
	\end{lem}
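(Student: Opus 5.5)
The plan is to compare the monomial ideal $\ring{K}$ with $\LT(\ring{I})$ and then use the theorem quoted from \cite{CoxLitOSh}, which says $H_{\ring{I}}(q)=H_{\LT(\ring{I})}(q)$ for homogeneous ideals. I read the undefined $\chi(q)$ in item (1) as $H_{\ring{K}}(q)$, so item (1) should say $H_{\ring{K}}(q)=H_{\ring{I}}(q)$.

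First, the containment. Each $g\in G$ lies in $\ring{I}$, so $\lt(g)\in\LT(\ring{I})$. Since $\LT(\ring{I})$ is an ideal, this gives $\ring{K}\subseteq\LT(\ring{I})$. Both ideals are monomial ideals, hence weight-homogeneous. So for every weight $n$ the inclusion $\ring{K}_n\subseteq\LT(\ring{I})_n$ gives a surjection
\begin{align*}
(\ring{R}/\ring{K})_n\twoheadrightarrow(\ring{R}/\LT(\ring{I}))_n .
\end{align*}
These spaces are finite-dimensional, because $\wt(X^{(i)})\to\infty$ means only finitely many monomials have weight $n$. Taking dimensions gives the coefficient-wise inequality
\begin{align*}
H_{\ring{K}}(q)\geq H_{\LT(\ring{I})}(q)=H_{\ring{I}}(q).
\end{align*}

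Second, the equalities. Combining this with the hypothesis $H_{\ring{K}}(q)\leq H_{\ring{I}}(q)$ yields $H_{\ring{K}}(q)=H_{\ring{I}}(q)$, which is item (1). Consequently $\dim\ring{K}_n=\dim\LT(\ring{I})_n$ for every $n$. Together with $\ring{K}_n\subseteq\LT(\ring{I})_n$ and finite-dimensionality, this forces $\ring{K}_n=\LT(\ring{I})_n$ for all $n$. Hence $\ring{K}=\LT(\ring{I})$, since both ideals are spanned by their homogeneous pieces.

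Finally, by definition $\LT(G)=\ring{K}$, so $\LT(G)=\LT(\ring{I})$. Since $G$ generates $\ring{I}$, it is a Gr\"obner basis, which is item (2).

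I do not expect a serious obstacle. The only point needing care is the finiteness of each weight space, which is what lets the inclusion plus equal dimensions give equality. The weight hypotheses on the variables guarantee this. Note that the homogeneity of the elements of $G$ is not actually needed for the argument beyond ensuring $\ring{I}$ is homogeneous.
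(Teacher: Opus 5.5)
Your proposal is correct and is essentially the paper's proof: $\ring{K}\subseteq\LT(\ring{I})$ together with $H_{\ring{I}}(q)=H_{\LT(\ring{I})}(q)$ gives $H_{\ring{I}}(q)\leq H_{\ring{K}}(q)$, and the hypothesis then forces $H_{\ring{K}}(q)=H_{\ring{I}}(q)$. This is also how the paper reads item (1), and it yields $\ring{K}=\LT(\ring{I})$. The only difference is that you derive $\ring{K}=\LT(\ring{I})$ directly from the inclusion and the finite-dimensionality of the weight spaces, whereas the paper cites \cite[Ch.\ 10, \S 2, Prop.\ 1]{CoxLitOSh}.
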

	\begin{proof}
	Clearly, $\ring{K}\subseteq \LT(\ring{I})$ and thus, coefficient-wise, we have:
	$H_{\ring{I}}(q)=H_{\LT(\ring{I})}(q)\leq H_{\ring{K}}(q).$
	Combining with $H_{\ring{K}}(q)\leq H_{\ring{I}}(q)$, we get the equality $H_{\ring{K}}(q)=H_{\ring{I}}(q)$.

	Therefore, $\LT(\ring{I})=\ring{K}$ and $G$ is a Gr\"obner basis for $\ring{I}$, see \cite[Ch.\ 10, \S 2, Prop.\ 1]{CoxLitOSh}.
	\end{proof}

	\section{Setup}
	\label{sec:setup}
	In this section, we now setup the conventions for the calculations in $J_{\infty}R_{\VOA{L}_1(\la{sl}_n)}$, where we fix an $n\geq 2$.
	
	Recall that $E(i,j)$ is the $n\times n$ elementary matrix that has a $1$ in row $i$ and column $j$ and zeros everywhere else.
	Now define:
	\begin{align}
		X(i,j)=
		\begin{cases}
			E(i,j) & 1\leq i\neq j\leq n\\
			E(i,i) - E(i+1,i+1) & 1\leq i=j\leq n-1.
		\end{cases}
	\end{align}
	It will be convenient to give a name to the set of colours:
	\begin{align}
		\sX_n=\{X(i,j)\,|\,1\leq i,j\leq n, (i,j)\neq (n,n)\}.
		\label{eqn:Xsymb}
	\end{align}
	
	The Lie algebra $\la{g}=\la{sl}_n$ is spanned by $X(i,j)\in\sX_n$. We have:
	\begin{align*}
		\la{g}=\la{n}_-\oplus \la{h} \oplus \la{n}_+
	\end{align*}
	where
	\begin{align*}
		\la{n}_+&=\mathrm{Span}_{\CC}\{ X(i,j)\,|\, 1\leq i<j\leq n\},\\
		\la{n}_-&=\mathrm{Span}_{\CC}\{ X(i,j)\,|\, 1\leq j<i\leq n\},\\
		\la{h}&=\mathrm{Span}_{\CC}\{ X(i,i)\,|\, 1\leq i\leq n-1\}.
	\end{align*}
	
	We will work within the commutative ring
	\begin{align}
		\ring{R} = \CC[X(i,j)_{k}\,|\, X(i,j)\in\sX_n, k\geq 1].
		\label{eqn:ringR}
	\end{align}
	Note that even though the elements $E(i,i)_k$ are not officially elements of $\ring{R}$, it may be sometimes beneficial to use expressions such as:
	\begin{align*}
		E(i,i)_k -E(j,j)_k,
	\end{align*}
	with $j>i$.
	We shall naturally interpret this as the element
	\begin{align*}
		X(i,i)_k + \cdots + X(j-1,j-1)_{k} \in \ring{R}.
	\end{align*}
	
	\begin{defi}
	We define a total order $\succ$ on colours in $\sX_n$.
	This order is uniquely defined by:
	\begin{enumerate} 
	\item For $X$'s belonging to the respective spaces:
	\begin{align}
		\la{n}_+ \succ \la{h} \succ \la{n}_-,
	\end{align}
	\item For $1\leq i<j\leq n$ and $1\leq i'<j'\leq n$
	\begin{align}
		\label{eqn:Xord}
		X_{i,j} \succ X_{i',j'} \iff X_{j,i} \prec X_{j',i'},
	\end{align}
	\item On $\la{n}_+$, that is, for $1\leq i<j \leq n$ and $1\leq i'<j' \leq n$
	\begin{align}
		X_{i,j} \succ  X_{i',j'} \iff i<i'\,\, \mathrm{or}\,\, i=i', j>j',
	\end{align}
	\item On $\la{h}$, that is, for $1\leq i,j\leq n-1$,
	\begin{align}
		X_{i,i}\succ X_{j,j} \iff i<j.
	\end{align}
	\end{enumerate}
	\end{defi}
	
	In words, the largest $X$ is $X(1,n)$, in the top-right corner. Within $\la{n}_+$, to get successively lower $X$'s, we start with the top row, scan it from right to left and then once the top row is finished, we move successively to the bottom rows, again scanning each of them from right to left. Within $\la{h}_+$, we just move along the diagonal from top to bottom to get successively lower elements. The order on $\la{n}_-$ is determined by transposing and reversing the one on $\la{n}_+$.
	
	As an example for $\la{sl}_4$, we have the following order, with $1$ being the highest.
	\begin{align*}
		\begin{matrix}
			X(1,1) & X(1,2) &X(1,3) &X(1,4)\\
			X(2,1) & X(2,2) &X(2,3) &X(2,4)\\
			X(3,1) & X(3,2) &X(3,3) &X(3,4)\\
			X(4,1) & X(4,2) &X(4,3) &
		\end{matrix}
		\quad\quad 
		\begin{matrix}
			7 & 3 &2 &1\\
			13 & 8 &5 &4\\
			14 & 11 &9 &6\\
			15 & 12 &10 &
		\end{matrix}
	\end{align*}
	\begin{rem}
		\label{rem:toprightimpliesgreater}
		If $x,y\in\sX_n$ and $x$ lies strictly to the top-right of $y$, or to the right of $y$ in the same row, or to the top of $y$ in the same column, then $x\succ y$.
	\end{rem}
	
	\begin{defi}	
	\label{def:ourgrevlex}
	We define the following total order on $\ZZ_{>0}\times \sX_n$:
	\begin{align}
		X(i,j)_{k} \succ X(i',j')_{k'} \iff k<k' \,\,\mathrm{or}\,\, k=k', X_{i,j}\succ X_{i',j'}.
	\end{align}
	\end{defi}
	Letting
	\begin{align*}
		\wt(X(i,j)_k)=k,
	\end{align*}	
	completes the definition of the grevlex order on $\ring{R}$.
	
	On $\ring{R}$, we have the action of $\la{g}$ by derivations. That is, we have:
	$$
	\mathrm{ad}(X(i,j))(X(i',j')_{k})=
	[X(i,j), X(i',j')_{k}]=[X(i,j), X(i',j')]_{k},$$
	where the bracket on the right-hand side is calculated in $\la{g}$.
	This action is extended by linearity and the Leibniz rule to act on
	$\ring{R}$.
	Next, we have a derivative $\partial$  which acts by:
	\begin{align}
		\partial X(i,j)_{k} = kX(i,j)_{k+1}.
		\label{eqn:der}
	\end{align}
	
	\begin{defi} 
	\label{def:TI}
	We let:
	\begin{align}
		\ring{T}&=\mathrm{ad}(\la{U}(\la{g}))\cdot(X(1,n)_1^2)
		\label{eqn:Tspace},\\
		\ring{I}&=\langle \partial^k\ring{T}\,|\, k\geq 0\rangle.
		\label{eqn:idealI}
	\end{align}
	\end{defi}
	We thus see that Zhu's $C_2$ algebra for $\VOA{L}_1(\la{sl}_n)$ is:
	\begin{align*}
		R_{\VOA{L}_1(\la{sl}_n)}=\dfrac{\CC[X(i,j)_1\,|\, X(i,j)\in\sX_n]}{\langle \ring{T} \rangle},
	\end{align*}
	and that our object of interest is the arc algebra:
	\begin{align}
		J_{\infty}R_{\VOA{L}_1(\la{sl}_n)}=
		\dfrac{\CC[\partial^kX(i,j)_1\,|\, k\geq 0, X(i,j)\in\sX_n]}{\langle \partial^k\ring{T}\,|\,k\geq 0 \rangle}\cong \dfrac{\ring{R}}{\ring{I}}.
		\label{eqn:JRsln1}
	\end{align}

	A few easy observations will help in the subsequent calculations.	

	First, for a formal variable $z$ and $X\in\sX_n$, consider	
	\begin{align*}
		X(z)=\sum_{n\geq 0}X_{n+1}z^n.
	\end{align*}
	Then by definition \eqref{eqn:der} of $\partial$ 
	\begin{align*}
		\frac{d}{dz}X(z)=\partial X(z),
	\end{align*}
	and by an easy extension, for $X^{(1)}, X^{(2)},\cdots, X^{(i)}\in \sX_n$,
	\begin{align*}
		\frac{d}{dz}\left(X^{(1)}(z)\cdots X^{(i)}(z)\right)
		=\partial\left(X^{(1)}(z)\cdots X^{(i)}(z)\right).
	\end{align*}
	Iterating this relation and then extracting coefficients, we get the next lemma.
	\begin{lem}\label{lem:derprod}
	For $X^{(1)}, X^{(2)},\cdots, X^{(i)}\in \sX_n$ and $k\in\ZZ_{\geq 0}$ we have
	\begin{align}
		\partial^{k}(X^{(1)}_1X^{(2)}_1\cdots X^{(i)}_1 ) = k! \sum_{\substack{(n_1,n_2,\cdots,n_i)\in \ZZ_{\geq 0}^i \\ n_1+n_2+\cdots+n_i=k } } X^{(1)}_{1+n_1}X^{(2)}_{1+n_2}\cdots X^{(i)}_{1+n_i}.
	\end{align}		
	\end{lem}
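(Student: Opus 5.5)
The plan is to pass to generating functions in an auxiliary variable $z$, exactly as the remarks preceding the statement suggest, and then compare coefficients of $z^k$. For each $X\in\sX_n$ set $X(z)=\sum_{m\geq 0}X_{m+1}z^m$. The coefficient of $z^m$ in $X(z)$ is $X_{m+1}$. Since $\partial$ acts coefficientwise in $z$, $\partial X(z)=\sum_{m\geq 0}(m+1)X_{m+2}z^m$, which is precisely $\frac{d}{dz}X(z)$. So on the single series $X(z)$ the operators $\partial$ and $\frac{d}{dz}$ agree.

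Next I will extend this to products. Both $\partial$ and $\frac{d}{dz}$ are derivations of $\ring{R}[[z]]$; here $\partial$ is extended $z$-linearly and satisfies the Leibniz rule on $\ring{R}$. They agree on each factor $X^{(j)}(z)$, so by the Leibniz rule they agree on the product $P(z)=X^{(1)}(z)\cdots X^{(i)}(z)$. Moreover, $\partial$ commutes with $\frac{d}{dz}$, since one acts only on $\ring{R}$ and the other only on $z$. Iterating therefore gives $\partial^kP(z)=\frac{d^k}{dz^k}P(z)$ for all $k\geq 0$, by induction on $k$.

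Finally I will set $z=0$, which extracts constant terms. On the left, the constant term of $P(z)$ is $X^{(1)}_1\cdots X^{(i)}_1$. Because $\partial$ commutes with evaluation at $z=0$, the constant term of $\partial^kP(z)$ is $\partial^k(X^{(1)}_1\cdots X^{(i)}_1)$. On the right, the constant term of $\frac{d^k}{dz^k}P(z)$ is $k!$ times the coefficient of $z^k$ in $P(z)$. Expanding the product, that coefficient is
\[
\sum_{n_1+\cdots+n_i=k} X^{(1)}_{1+n_1}\cdots X^{(i)}_{1+n_i},
\]
with $n_j\geq 0$, which is the claimed formula.

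There is no real obstacle here. The only step needing care is the justification that $\partial$ and $\frac{d}{dz}$ commute and that $\partial$ commutes with evaluation at $z=0$. Both hold because $\partial$ is applied coefficientwise in $z$, and each coefficient is a finite sum.
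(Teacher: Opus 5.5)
Your proposal is correct and is essentially the paper's own argument: it introduces $X(z)=\sum_{m\geq 0}X_{m+1}z^m$, observes $\frac{d}{dz}X(z)=\partial X(z)$, extends this to products, iterates, and extracts coefficients. You have simply made explicit the routine justifications the paper leaves implicit. These are that $\partial$, extended coefficientwise, is a derivation of $\ring{R}[[z]]$ commuting with $\frac{d}{dz}$, and that it commutes with taking constant terms.
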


	\begin{nota}
		While dealing with derivatives, it will convenient to use two notations.
		First, we let
		$$\partial^{[k]}=\dfrac{\partial^k }{k!}.$$
		Next,
		we say $a\doteq b$ if $a=\alpha b$ for some $\alpha\in \CC^{\ast}$.
	\end{nota}

	Next lemma will be used repeatedly to determine various leading terms.
	\begin{lem}
		\label{lem:quadder}
		Let  $a,b\in\CC^{\star}$, $k\in\ZZ_{\geq 1}$, and $X^{(1)}\succeq X^{(2)}\succeq X^{(3)}\succeq X^{(4)}\in \sX_n$ such that $X^{(1)}_1X^{(4)}_1\neq X^{(2)}_1X^{(3)}_1$ (i.e., either $X^{(1)}\succ X^{(2)}$ or $X^{(3)}\succ X^{(4)}$).
		\begin{enumerate}
			\item If $X^{(1)}\succ X^{(2)}$ and  $X^{(3)}\succ X^{(4)}$ then:
			\begin{align*}
			\lt(\partial^{2k-1}(aX^{(1)}_1X^{(4)}_1+bX^{(2)}_1X^{(3)}_1))
				&\doteq  X^{(4)}_kX^{(1)}_{k+1},\\
				\lt(\partial^{2k-2}(aX^{(1)}_1X^{(4)}_1+bX^{(2)}_1X^{(3)}_1))
				&\doteq  X^{(2)}_kX^{(3)}_{k}.
			\end{align*}
			\item If $X^{(1)}\succ X^{(2)}$ and  $X^{(3)}=X^{(4)}$ then: 
			\begin{align*}
				\lt(\partial^{2k-1}(aX^{(1)}_1X^{(3)}_1+bX^{(2)}_1X^{(3)}_1))
				&\doteq  X^{(3)}_kX^{(1)}_{k+1},\\
				\lt(\partial^{2k-2}(aX^{(1)}_1X^{(3)}_1+bX^{(2)}_1X^{(3)}_1))
				&\doteq  X^{(1)}_kX^{(3)}_{k}.
			\end{align*}
			\item If $X^{(1)}= X^{(2)}$ and  $X^{(3)}\succ X^{(4)}$ then: 
			\begin{align*}
				\lt(\partial^{2k-1}(aX^{(1)}_1X^{(3)}_1+bX^{(1)}_1X^{(4)}_1))
				&\doteq  X^{(3)}_kX^{(1)}_{k+1},\\
				\lt(\partial^{2k-2}(aX^{(1)}_1X^{(3)}_1+bX^{(1)}_1X^{(4)}_1))
				&\doteq  X^{(1)}_kX^{(3)}_{k}.
			\end{align*}
		\end{enumerate}
	\end{lem}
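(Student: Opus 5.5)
The proof will be a direct computation. First expand both quadratics with Lemma \ref{lem:derprod}, then identify the grevlex-largest monomial, and finally check that its coefficient cannot cancel. By Lemma \ref{lem:derprod} with $i=2$, for $m\geq 0$ we have
\begin{align*}
\partial^{m}(X_1Y_1)=m!\sum_{p+r=m}X_{1+p}Y_{1+r}.
\end{align*}
So every monomial appearing in $\partial^m(aX^{(1)}_1X^{(4)}_1+bX^{(2)}_1X^{(3)}_1)$ is quadratic of weight $m+2$, and each comes with a positive integer multiple of $a$ or $b$.

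Since all these monomials have the same weight, Definition \ref{def:grevlex} says the larger one is the one that contains fewer copies of the smallest variable occurring in them. By Definition \ref{def:ourgrevlex}, a variable with a larger subscript is smaller. For a quadratic $X_{s}Y_{t}$ with $s+t=m+2$, the smallest variable is the one with subscript $\max(s,t)$. The leading monomial is therefore the one whose smallest variable is as large as possible. This forces the most balanced split of subscripts, and ties are broken by the colour order on $\sX_n$.

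\emph{Even case $m=2k-2$.} The balanced split is $s=t=k$. The only candidates are $X^{(1)}_kX^{(4)}_k$ and $X^{(2)}_kX^{(3)}_k$; every other monomial contains a variable of subscript $>k$ and is smaller.
\begin{itemize}
\item In case (1), the smallest variables of the two candidates are $X^{(4)}_k$ and $X^{(3)}_k$ respectively. Since $X^{(3)}\succ X^{(4)}$, the monomial $X^{(2)}_kX^{(3)}_k$ wins.
\item In case (2), the candidates share the smallest variable $X^{(3)}_k$. Comparing the remaining factors, $X^{(1)}\succ X^{(2)}$ gives $X^{(1)}_kX^{(3)}_k$.
\item Case (3) is symmetric: the candidates share $X^{(1)}_k$, and $X^{(3)}\succ X^{(4)}$ gives $X^{(1)}_kX^{(3)}_k$.
\end{itemize}

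\emph{Odd case $m=2k-1$.} The subscripts must be $\{k,k+1\}$, so the candidates are $X^{(1)}_{k+1}X^{(4)}_k$, $X^{(1)}_kX^{(4)}_{k+1}$, $X^{(2)}_{k+1}X^{(3)}_k$ and $X^{(2)}_kX^{(3)}_{k+1}$. In each, the smallest variable is the factor with subscript $k+1$. Since $X^{(1)}$ is maximal among the four colours, $X^{(1)}_{k+1}$ is the largest possible such factor.
\begin{itemize}
\item In cases (1) and (2), only one candidate contains $X^{(1)}_{k+1}$, namely $X^{(4)}_kX^{(1)}_{k+1}$, which is $X^{(3)}_kX^{(1)}_{k+1}$ in case (2).
\item In case (3) there is a tie between $X^{(3)}_kX^{(1)}_{k+1}$ and $X^{(4)}_kX^{(1)}_{k+1}$. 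It is broken by $X^{(3)}\succ X^{(4)}$.
\end{itemize}

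The main point needing care is that the leading monomial really has a nonzero coefficient: it must not arise from both summands with cancelling coefficients $a$ and $b$. I would check this case by case using the distinctness hypotheses.
\begin{itemize}
\item In case (1), $X^{(1)}\neq X^{(2)}$ and $X^{(3)}\neq X^{(4)}$, and $X^{(1)}=X^{(4)}$ is impossible. Hence $X^{(4)}_kX^{(1)}_{k+1}$ appears only in $\partial^{2k-1}(X^{(1)}_1X^{(4)}_1)$.
\item Still in case (1), $X^{(2)}_kX^{(3)}_k$ cannot equal $X^{(1)}_kX^{(4)}_k$ because $X^{(1)}_1X^{(4)}_1\neq X^{(2)}_1X^{(3)}_1$. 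So it appears only in the second summand, and it does so even if $X^{(2)}=X^{(3)}$, where the coefficient is simply $(2k-2)!\,b$.
\item In cases (2) and (3), the leading monomial contains a colour ($X^{(1)}$, resp.\ $X^{(3)}$) that does not occur in the other product.
\end{itemize}
In every case the coefficient is a positive integer times $a$ or $b$, hence nonzero, which gives the stated $\doteq$ relations.
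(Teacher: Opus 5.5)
Your proof is correct, and it is the direct check the paper leaves implicit: the lemma is stated there without proof, as a consequence of Lemma~\ref{lem:derprod} and the grevlex order. One justification needs a small repair. In case (3) with $X^{(1)}=X^{(2)}=X^{(3)}$, the colour $X^{(3)}$ \emph{does} occur in the other product $X^{(1)}_1X^{(4)}_1$. Argue instead that every monomial of $\partial^m(X^{(1)}_1X^{(4)}_1)$ contains the colour $X^{(4)}$, which cannot appear in the claimed leading monomial because $X^{(4)}\prec X^{(3)}\preceq X^{(1)}$.
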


	\section{Elements of \texorpdfstring{$\ring{T}$}{T}}
	\label{sec:quad}
	
	The aim of this section is to produce a set of elements of $\ring{T}$ such that these elements, their derivatives, and the $S$-polynomials among them eventually form our Gr\"obner basis for $\ring{I}$.
	
	To begin, we let 
	\begin{align}
		G_1 = \{\}
	\end{align}		
	and as we go along, we shall put certain elements of $\ring{T}$ and their derivatives in $G_1$. The set $G_1$ will be the starting point of Buchberger's algorithm.

	By a slight abuse of notation, we let 
	\begin{align*}
		\la{h}=\{(i,i)\,|\, 1\leq i\leq n-1\},\quad \la{h}^e=\{(i,i)\,|\, 1\leq i\leq n\}.
	\end{align*}

	Throughout this section, we will consider four indices $i,j,i',j'$,  $1\leq i,j,i',j'\leq n$, and produce elements of $\ring{T}$ corresponding to the box with corners $(i,j)$, $(i',j')$, $(i,j')$, $(i',j)$. These elements will have the property that, the leading terms of their odd order derivatives (especially, the first derivatives which yield weight $3$ elements) will be quadratics whose factors lie on the top-right and bottom-left corners, see Figure \ref{fig:boxes}.
	\begin{figure}
		\centering
		\subcaptionbox{Leading terms in $\partial^{2k-1} \ring{T}$ for the allowed boxes\label{fig:boxes}}[0.22\linewidth]{
			\begin{tikzpicture}[scale=0.8]
				\draw[dotted] (0.5,1) -- (2.5,1);
				\draw[dotted] (0.5,2) -- (2.5,2);
				\draw[dotted] (1,0.5) -- (1,2.5);
				\draw[dotted] (2,0.5) -- (2,2.5);
				\node at (0.2,1) {$\scriptstyle{i}$};
				\node at (0.2,2) {$\scriptstyle{i'}$};
				\node at (1,2.8) {$\scriptstyle{j'}$};
				\node at (2,2.8) {$\scriptstyle{j}$};
				\node[fill=white, inner sep=0.3pt] at (2,2) {$\scriptstyle{\bullet}$};
				\node[fill=white, inner sep=0.3pt] at (1,1) {$\scriptstyle{\bullet}$};
				\node at (2.4,1.8) {$\scriptstyle{k+1}$};
				\node at (1.2,0.8) {$\scriptstyle{k}$};
			\end{tikzpicture}
		}	
		\subcaptionbox{Not allowed\label{fig:notalloweddeg}}[0.22\linewidth]{
			\begin{tikzpicture}[scale=1]
				\draw[dotted] (0.5,1)--(1.5,1);
				\draw[dotted] (1,0.5)--(1,1.5);
				\draw[dotted] (0.5,1.5) -- (1.5,0.5);
				\node at (1.7,0.3) {$\scriptstyle{\la{h}^e}$};
				\node[fill=white, inner sep=0.3pt] at (1,1) {$\scriptstyle{\bullet}$};
				\node[align=center] at (0,1) {$\scriptstyle{i'=i}$\\$\scriptstyle{=j'=j}$};
				\node[align=center] at (1,2) {$\scriptstyle{j'=j}$\\$\scriptstyle{=i'=i}$};
				\end{tikzpicture}
		}	
		\subcaptionbox{Not allowed\label{fig:notallowedvert}}[0.22\linewidth]{
			\begin{tikzpicture}[scale=1]
				\draw[dotted] (0.5,1)--(1.5,1);
				\draw[dotted] (0.5,2)--(1.5,2);
				\draw[dotted] (1,0.5)--(1,2.5);
				\draw[dotted] (0.5,1.5) -- (1.5,0.5);
				\node at (1.7,0.3) {$\scriptstyle{\la{h}^e}$};
				\node[fill=white, inner sep=0.3pt] at (1,1) {$\scriptstyle{\bullet}$};
				\node[fill=white, inner sep=0.3pt] at (1,2) {$\scriptstyle{\bullet}$};
				\node[align=center] at (1,2.7) {$\scriptstyle{j=j'=i}$};
				\node[align=center] at (0,2) {$\scriptstyle{i'}$};
				\node[align=center,rotate=-90] at (0,1.5) {$\scriptstyle{<}$};
				\node[align=center] at (0,1) {$\scriptstyle{i=j=j'}$};
			\end{tikzpicture}
		}
		\subcaptionbox{Not allowed\label{fig:notallowedhoriz}}[0.22\linewidth]{
			\begin{tikzpicture}[scale=1]
				\draw[dotted] (0.5,1)--(2.5,1);
				\draw[dotted] (2,0.5)--(2,1.5);
				\draw[dotted] (1,0.5)--(1,1.5);
				\draw[dotted] (1.5,1.5) -- (2.5,0.5);
				\node at (2.7,0.3) {$\scriptstyle{\la{h}^e}$};
				\node[fill=white, inner sep=0.3pt] at (1,1) {$\scriptstyle{\bullet}$};
				\node[fill=white, inner sep=0.3pt] at (2,1) {$\scriptstyle{\bullet}$};
				\node[align=center] at (2,1.8) {$\scriptstyle{j=i=i'}$};
				\node[align=center] at (1.3,1.8) {$\scriptstyle{<}$};
				\node[align=center] at (1,1.8) {$\scriptstyle{j'}$};
				\node[align=center] at (0,1) {$\scriptstyle{i'=i=j}$};
			\end{tikzpicture}
		}	
	\caption{Boxes and leading terms of odd order derivatives}
	\label{fig:iji'j'}
	\end{figure}
	
	Several  such boxes will \emph{not} be considered.
	Specifically, we will \emph{not} consider indices such that:
	
	\begin{enumerate}
		\item The box is completely degenerate  and lies on $\la{h}^e$ (see Figure \ref{fig:notalloweddeg}), i.e., $$1\leq i=i'=j=j'\leq n.$$
		\item The box is semi-degenerate, i.e., has only two distinct points, and such that these points are vertical with the bottom one on $\la{h}^e$ (see Figure \ref{fig:notallowedvert}), i.e.,
		$$1\leq i'<i=j=j'\leq n.$$
		\item The transpose of the previous kind of box. That is, the box is semi-degenerate (has only two distinct points) and such that these points are  horizontal with the right one on $\la{h}^e$ (see Figure \ref{fig:notallowedhoriz}), i.e., 
		$$1\leq j'<j=i=i'\leq n.$$
	\end{enumerate}

	It will be beneficial to denote the singular vector by $v$:
	\begin{align}
		v=X(1,n)_{1}^2\in \ring{T} \subset \ring{I}
		\label{eqn:sing}
	\end{align}
	Note that:
	\begin{align*}
		v^{\tr}=X(n,1)_{1}^2\in \ring{T}
	\end{align*}		
	as well, because,
	\begin{align*}
	X(1,n)_{1}^2&\xrightarrow{\mathrm{ad}(X(n,1))}
	2( E(n,n)_{1}-E(1,1)_{1})X(1,n)_{1}\\
	&\xrightarrow{\mathrm{ad}(X(n,1))}
	-4X(n,1)_{1}X(1,n)_{1}+ 2(E(n,n)_{1}-E(1,1)_{1})^2\\
	&\xrightarrow{\mathrm{ad}(X(n,1))}
	-12X(n,1)_{1}(E(n,n)_{1}-E(1,1)_{1})\\
	&\xrightarrow{\mathrm{ad}(X(n,1))}
	24X(n,1)_{1}^2=24v^{\tr}\in \ring{T}.
	\end{align*}
	
	\begin{lem} \label{lem:transpose} If $x\in \ring{T}$, $x^\tr\in \ring{T}$.
	\end{lem}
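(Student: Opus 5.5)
The plan is to realise transposition as a map on $\ring{R}$ that intertwines the adjoint action of $\la{g}$ up to a Lie algebra automorphism, and then to use the computation just made, which shows $v^{\tr}\in\ring{T}$.

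First, fix the meaning of $\tr$ on $\ring{R}$. On $\la{sl}_n$, the map $X\mapsto X^{t}$ sends $E(i,j)$ to $E(j,i)$ and fixes $\la{h}$ pointwise, so it sends $X(i,j)$ to $X(j,i)$. Extend it to the algebra automorphism of $\ring{R}$ with $X(i,j)_k\mapsto X(j,i)_k$, and denote this automorphism by $x\mapsto x^{\tr}$. Set $\tau(X)=-X^{t}$. Then $\tau$ is a Lie algebra automorphism of $\la{g}$, since $[X,Y]^t=-[X^t,Y^t]$.

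Second, prove the intertwining identity
\begin{align*}
\bigl(\mathrm{ad}(X)\,x\bigr)^{\tr}=\mathrm{ad}(\tau(X))\,x^{\tr}\qquad (X\in\la{g},\ x\in\ring{R}).
\end{align*}
Both sides are derivations in $x$, twisted by the algebra automorphism $\tr$. It therefore suffices to check the identity on the generators $Y_k$. On generators it reads $([X,Y]_k)^{\tr}=([X,Y]^t)_k=-[X^t,Y^t]_k=[\tau(X),Y^t]_k$, which holds. Iterating gives $\bigl(\mathrm{ad}(u)\,x\bigr)^{\tr}=\mathrm{ad}(\tau(u))\,x^{\tr}$ for every $u\in\la{U}(\la{g})$, where $\tau$ is extended to an automorphism of $\la{U}(\la{g})$.

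Third, combine the two steps. Any $x\in\ring{T}$ has the form $x=\mathrm{ad}(u)\,v$. Then
\begin{align*}
x^{\tr}=\mathrm{ad}(\tau(u))\,v^{\tr}\in \mathrm{ad}(\la{U}(\la{g}))\cdot v^{\tr}.
\end{align*}
By the computation preceding the lemma, $v^{\tr}\in\ring{T}$, and $\ring{T}$ is stable under $\mathrm{ad}(\la{U}(\la{g}))$ by definition. Hence $x^{\tr}\in\ring{T}$.

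An alternative route avoids $v^{\tr}$: $\ring{T}$ is the unique copy of the irreducible module of highest weight $2\theta$ inside the quadratic part, and $\tr$ maps $\la{g}$-submodules to $\tau$-twisted submodules. The computational route above is more direct.

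I expect no real obstacle. The only point needing care is the sign, namely that transpose is an anti-automorphism of $\la{g}$, so $\tau=-(\cdot)^t$ must be used rather than $(\cdot)^t$ itself.
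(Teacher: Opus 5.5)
Your proof is correct and is essentially the paper's argument. The paper likewise starts from $v^{\tr}\in\ring{T}$ and uses $[a,x]^{\tr}=-[a^{\tr},x^{\tr}]$ (your $\mathrm{ad}(\tau(a))\,x^{\tr}$), inducting over nested brackets to cover all of $\ring{T}$. Your version adds one justification the paper leaves implicit: the identity holds on all of $\ring{R}$ because both sides are $\tr$-twisted derivations that agree on generators.
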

	\begin{proof}
		Clearly, $v\in \ring{T}$, and $v^{\tr}\in \ring{T}$. Then,
		for $a\in \la{g}$,
		$[a,v]\in \ring{T}$, and $[a, v]^\tr=-[a^{\tr},v^{\tr}]\in \ring{T}$. Now, by induction on $j$, it follows that for all $a_1,\cdots, a_j\in \la{g}$,
		$[a_1,\cdots,[a_j,v]\cdots]^{\tr}\in \ring{T}$. Since $\ring{T}$ is spanned by such elements, it is stable under transpose.
	\end{proof}
	
	\subsection{Completely degenerate box} 
	\label{sec:compldeg}
	We start with the completely degenerate box that does not lie on $\la{h}^e$, i.e., $i=i'$, $j=j'$  but $i\neq j$.
	
	If $j=n$, $i<n$, we may assume that $i> 1$, since $i=1$ corresponds to $v\in \ring{T}$. Then, we see:
	\begin{align*}
	 v\xrightarrow{\mathrm{ad}(X(i,1))}2X(i,n)_{1}X(1,n)_{1}\xrightarrow{\mathrm{ad}(X(i,1))}
	 2X(i,n)_{1}^2
	 \in \ring{T}.
	\end{align*}
	Thus, to the box with corners $(i,n),(i,n),(i,n), (i,n)$, we shall associate the element
	\begin{align*}
		x=X(i,n)^2_{1}\in \ring{T}.
	\end{align*}
	It is then clear that for $k\in\ZZ_{\geq 1}$:
	\begin{align}
		\lt(\partial^{2k-1} x) &\doteq X(i,n)_{k}X(i,n)_{k+1}.\\
		\lt(\partial^{2k-2} x) &\doteq X(i,n)_{k}X(i,n)_{k}.
	\end{align}
	
	Next, if $i,j<n$ and $i\neq j$, we see:
	\begin{align*}
		X(i,n)^2_{1}\xrightarrow{\mathrm{ad}(X(n,j))}
		-2X(i,n)_{1}X(i,j)_{1}
		\xrightarrow{\mathrm{ad}(X(n,j))}
		2X(i,j)^2_{1}\in \ring{T}.
	\end{align*}
	Thus, to the box $(i,j),(i,j),(i,j), (i,j)$, we shall associate the element
	\begin{align}
		x=X(i,j)_{1}^2\in \ring{T}. \label{eqn:compdegnotonh}
	\end{align}
	It is then clear that for $k\in\ZZ_{\geq 1}$:
	\begin{align}
		\lt(\partial^{2k-1} x) &\doteq X(i,j)_{k}X(i,j)_{k+1},\\
		\lt(\partial^{2k-2} x) &\doteq X(i,j)_{k}X(i,j)_{k}.
	\end{align}

	This takes care of indices $i\neq j$, $1\leq i<n$, $1\leq j\leq n$.
	For the remaining, 
	we simply see take transpose of $X(i,n)^2_{1}\in \ring{T}$, ($1\leq i<n$) to obtain 
	\begin{align*}
		x=X(n,i)^2_{1}\in \ring{T},
	\end{align*}
	and for $k\in\ZZ_{\geq 1}$:
	\begin{align*}
		\lt(\partial^{2k-1} x) &\doteq X(n,i)_{k}X(n,i)_{k+1},\\
		\lt(\partial^{2k-2} x) &\doteq X(n,i)_{k}X(n,i)_{k}.
	\end{align*}
	
	In conclusion, for all $1\leq i\neq j\leq n$, we have produced:
	\begin{align}
		X(i,j)_1^2&\in \ring{T} \label{eqn:deg}.
	\end{align}
	Now we enlarge the set $G_1$ by putting in it these elements and their derivatives.
	
	\begin{nota}
		Going forward, we will denote the designation  $\mathrm{ad}(X(i,j))$ simply by $X(i,j)$. 
	\end{nota}
	
	\subsection{Semi-degenerate horizontal box}
	Next, we consider the case when the box is semi-degenerate horizontal. 
		
	We start with the case that no endpoint is in $\la{h}^e$. So, we assume that $i=i'$, $j'<j$, but $i\neq j$ and $i\neq j'$.
	For this, we observe (see Figure \ref{fig:semideghoriz})
	\begin{align}
		X(i,j)^2\in \ring{T} \xrightarrow{-\frac{1}{2}X(j,j')} x=\ord{X(i,j)_{1}}{1}\ord{X(i,j')_{1}}{2}\in \ring{T}.
		\label{eqn:semideghoriz}
	\end{align}
	Here and onwards, we will denote the order of $X(i,j)_k$ factors with numerical marks under them, with 
	\tikz{\node[ellipse,draw,inner sep=1pt,minimum width=1em, minimum height=1em]  {$\scriptstyle{1}$};} denoting the highest factor.
		
	We clearly have for $k\in\ZZ_{\geq 1}$:
	\begin{align*}
		\lt(\partial^{2k-1}x)&\doteq X(i,j')_{k}X(i,j)_{k+1},\\
		\lt(\partial^{2k-2}x)&\doteq X(i,j)_{k}X(i,j')_{k}.
	\end{align*}

	If however, the left end point of the box is on $\la{h}$, i.e., if $i=i'=j'<j$, we proceed as follows. 

	We start with $X(i,j)_{1}^2\in \ring{T}$ and use:
	\begin{align}
		&X(i,j)_{1}^2\xrightarrow{-\frac{1}{2}X(j,i)}X(i,j)_{1}(E(i,i)-E(j,j))_{1}\notag\\
		&=\ord{X(i,j)_{1}}{1}\ord{X(i,i)_{1}}{2} + \cdots +\ord{X(i,j)_{1}}{1}\ord{X(j-1,j-1)_{1}}{j-i+1} =x\in \ring{T}.
		\label{eqn:semideghorizh}
	\end{align}
	For the locations of the elements, refer to Figure \ref{fig:semideghoriz}.
	Comparing leading terms arising from various quadratics using Lemma \ref{lem:quadder} we see:
	\begin{align*}
		\lt(\partial^{2k-1}x)&\doteq X(i,i)_{k}X(i,j)_{k+1},\\
		\lt(\partial^{2k-2}x)&\doteq X(i,j)_{k}X(i,i)_{k}.
	\end{align*}
	
	We enlarge the set $G_1$ by putting in it elements produced in \eqref{eqn:semideghoriz}, \eqref{eqn:semideghorizh}, and their derivatives of all orders.
	
	\begin{figure}
		\centering
		\captionsetup{width=\linewidth}
		\subcaptionbox{Elements from \eqref{eqn:semideghoriz}}[0.43\linewidth]
		{	\begin{tikzpicture}[scale=1]
				\node (i) at (0,1) {$\scriptstyle{i=i'}$};
				\node (j) at (1,2) {$\scriptstyle{j'}$};
				\node (jp) at (1.5,2) {$\scriptstyle{<}$};
				\node (jp) at (2,2) {$\scriptstyle{j}$};
				\draw[dotted] (0.5,1) -- (2.5,1);
				\draw[dotted] (1,0.5) -- (1,1.5);
				\draw[dotted] (2,0.5) -- (2,1.5);
				\draw[black,fill=black] (1,1) circle (0.6mm);
				\draw[black,fill=black] (2,1) circle (0.6mm);
		\end{tikzpicture}
		}
		\qquad\quad
		\subcaptionbox{Elements from \eqref{eqn:semideghorizh}}[0.43\linewidth]
		{
			\begin{tikzpicture}[scale=1]
				\node at (2.75,0.25) {$\scriptstyle{\la{h}}$};
				\draw[dotted] (0.5,2) -- (2.5,2);
				\draw[dotted] (1,1.5) -- (1,2.5);
				\draw[dotted] (2,0.5) -- (2,2.5);
				\node at (-0.25,2) {$\scriptstyle{i=i'=j'}$};
				\node at (1,3) {$\scriptstyle{j'=i=i'}$};
				\node at (1.6,3) {$\scriptstyle{<}$};
				\node at (2,3) {$\scriptstyle{j}$};
				\draw[black,fill=black] (2,2) circle (0.6mm);
				\draw[black,fill=black] (1,2) circle (0.6mm);
				\draw[gray,fill=gray] (1.2,1.8) circle (0.6mm);
				\draw[gray,fill=gray] (1.4,1.6) circle (0.6mm);
				\draw[gray,fill=gray] (1.6,1.4) circle (0.6mm);
				\draw[gray,fill=gray] (1.8,1.2) circle (0.6mm);
				\draw[dotted] (0.5,2.5) -- (2.5,0.5);
				
			\end{tikzpicture}
		}
		\caption{Semi-degenerate horizontal box, right end-point not in $\la{h}^e$}
		\label{fig:semideghoriz}
	\end{figure}
	
	\subsection{Semi-degenerate vertical box}
	These elements are obtained by transposing the elements obtained above, see Lemma \ref{lem:transpose}.
	Refer to Figure \ref{fig:semidegvert} for reference.
	If $j=j'$, $i'<i$, but $i\neq j$, $i'\neq j'$, we have:
	\begin{align}
		x=\ord{X(i',j)_{1}}{1}\ord{X(i,j)_{1}}{2}\in \ring{T},
		\label{eqn:semidegvert}
	\end{align}
	with
	\begin{align*}
		\lt(\partial^{2k-1}x)&\doteq X(i,j)_{k}X(i',j)_{k+1},\\
		\lt(\partial^{2k-2}x)&\doteq X(i',j)_{k}X(i,j)_{k}.
	\end{align*}
	If the top endpoint is in $\la{h}$ i.e., $i'=j=j'<i$, 
	\begin{align}
		x=\ord{X(i',i')_{1}}{1} \ord{X(i,j)_{1}}{i-i'+1}+ \cdots +\ord{X(i-1,i-1)_{1}}{i-i'}\ord{X(i,j)_{1}}{i-i'+1} \in \ring{T}
		\label{eqn:semidegverth},
	\end{align}
	with 
	\begin{align*}
		\lt(\partial^{2k-1}x)&\doteq  X(i,j)_kX(i',i')_{k+1},\\
		\lt(\partial^{2k-2}x)&\doteq  X(i',i')_{k}X(i,j)_k.
	\end{align*}
	We enlarge the set $G_1$ by putting in it elements \eqref{eqn:semidegvert}, \eqref{eqn:semidegverth}, and their derivatives of all orders.
	\begin{figure}
		\centering
		\captionsetup{width=\linewidth}
		\subcaptionbox[0.43\linewidth]{Elements from \eqref{eqn:semidegvert}}
		{
			\begin{tikzpicture}[scale=1]
				\node at (-0.5,1) {};
				\node at (2.5,1) {};
				\node  at (1,3) {$\scriptstyle{j'=j}$};
				\node  at (0,1) {$\scriptstyle{i}$};
				\node[rotate=-90]  at (0,1.5) {$\scriptstyle{<}$};
				\node  at (0,2) {$\scriptstyle{i'}$};
				\draw[dotted] (1,0.5) -- (1,2.5);
				\draw[dotted] (0.5,1) -- (1.5,1);
				\draw[dotted] (0.5,2) -- (1.5,2);
				\draw[black,fill=black] (1,1) circle (0.6mm);
				\draw[black,fill=black] (1,2) circle (0.6mm);
			\end{tikzpicture}
		}
		\qquad\quad
		\subcaptionbox[0.43\linewidth]{Elements from \eqref{eqn:semidegverth}}
		{
			\begin{tikzpicture}[scale=1]
				\node at (2.75,0.25) {$\scriptstyle{\la{h}}$};
				\draw[dotted] (0.5,1) -- (2.5,1);
				\draw[dotted] (1,0.5) -- (1,2.5);
				\draw[dotted] (0.5,2) -- (1.5,2);
				\node at (0,2) {$\scriptstyle{i'=j'=j}$};
				\node[rotate=-90] at (0,1.5) {$\scriptstyle{<}$};
				\node at (1,3) {$\scriptstyle{j'=j=i'}$};
				\node at (0,1) {$\scriptstyle{i}$};
				\draw[black,fill=black] (1,1) circle (0.6mm);
				\draw[gray,fill=black] (1,2) circle (0.6mm);
				\draw[gray,fill=gray] (1.2,1.8) circle (0.6mm);
				\draw[gray,fill=gray] (1.4,1.6) circle (0.6mm);
				\draw[gray,fill=gray] (1.6,1.4) circle (0.6mm);
				\draw[gray,fill=gray] (1.8,1.2) circle (0.6mm);
				\draw[dotted] (0.5,2.5) -- (2.5,0.5);
			\end{tikzpicture}
		}
		\caption{Semi-degenerate vertical box, bottom end-point not in $\la{h}^e$}
		\label{fig:semidegvert}
	\end{figure}

	Next, we will consider non-degenerate cases. So, we assume that $i'<i$, $j'<j$.
	
	\subsection{Non-degenerate and no location in \texorpdfstring{$\la{h}^e$}{h e}}
    Suppose that no location among $(i,j), (i',j'), (i,j'), (i',j)$ is in $\la{h}^e$. Thus, $i\neq j$, $i'\neq  j'$.
    Here, we observe:
    \begin{align}
    	X(i,j)_{1}X(i,j')_{1}\in \ring{T}\xrightarrow{X(i',i)}
    	x=\ord{X(i',j)_{1}}{1}\ord{X(i,j')_{1}}{4}+X(i,j)_{1}X(i',j')_{1}\in \ring{T},
    	\label{eqn:nondegnohe}
    \end{align}
    see Figure \ref{fig:nondegnoh}.
    \begin{figure}
    	\begin{tikzpicture}[scale=1]
    		\draw[dotted] (0.5,1) -- (2.5,1);
    		\draw[dotted] (0.5,2) -- (2.5,2);
    		\draw[dotted] (1,0.5) -- (1,2.5);
    		\draw[dotted] (2,0.5) -- (2,2.5);
    		\node at (0,1) {$\scriptstyle{i}$};
    		\node[rotate=-90] at (0,1.5) {$\scriptstyle{<}$};
    		\node at (0,2) {$\scriptstyle{i'}$};
    		\node at (1,3) {$\scriptstyle{j'}$};
    		\node at (1.5,3) {$\scriptstyle{<}$};
    		\node at (2,3) {$\scriptstyle{j}$};
    		\node[fill=white,inner sep=0.3pt] at (2,2) {${\bullet}$};
    		\node[fill=white,inner sep=0.3pt] at (1,1) {${\bullet}$};
    		\node[fill=white,inner sep=0.3pt] at (2,1) {${\circ}$};
    		\node[fill=white,inner sep=0.3pt] at (1,2) {${\circ}$};
    	\end{tikzpicture}
    	\caption{Non-degenerate box, no location in $\la{h}^e$}
    	\label{fig:nondegnoh}
	\end{figure}    	
    Here, the exact ordering among $X(i,j)_{1}$ and $X(i',j')$ depends on how they belong to $\la{n}_+$ or $\la{n}_-$, however, regardless, the highest and the lowest $X$ terms remain as indicated. Thus, applying Lemma \ref{lem:quadder}, we see
    \begin{align*}
    	\lt(\partial^{2k-1} x) &\doteq X(i,j')_{k}X(i',j)_{k+1},\\
    	\lt(\partial^{2k-2} x) &\doteq X(i,j)_{k}X(i',j')_{k}.
    \end{align*}
    We include the elements \eqref{eqn:nondegnohe} and their derivatives in $G_1$.

    \subsection{Non-degenerate and top-left in \texorpdfstring{$\la{h}$}{h}}
    In addition to $i'<i$ and $j'<j$, we assume that $i'=j'$, but $i\neq j$, i.e., only the top-left location is in $\la{h}$.
	Locations of various elements that arise in this section are collected in Figure \ref{fig:nondegtlinh}.

	First, suppose that $i=i'+1$. Since $i'=j'<j$, $i\neq j$, this implies that $i'+1<j$.
	Starting from the element of the type \eqref{eqn:semideghorizh} (with  appropriate indices) we obtain:
	\begin{align}
		X(i',&i')_{1}X(i',i'+1)_{1} = (E(i',i')_{1}-E(i'+1,i'+1)_{1})X(i',i'+1)_{1}\notag\\ &\xrightarrow{-X(i'+1,j)}
		-X(i'+1,j)_{1}X(i',i'+1)_{1}+X(i',i')_{1}X(i',j)_{1}\notag\\
		&\xrightarrow{\frac{1}{2}X(i'+1,i')}
		x=\ord{X(i'+1,j)_{1}}{2}\ord{X(i',i')_{1}}{3}
		+\ord{X(i'+1,i')_{1}}{4}\ord{X(i',j)_{1}}{1}
		\in \ring{T}.
		\label{eqn:nondegtlinhshort}
	\end{align}
	Thus, 
	\begin{align*}
		\lt(\partial^{2k-1} x) &\doteq X(i'+1,i')_{k}X(i',j)_{k+1},\\
		\lt(\partial^{2k-2} x) &\doteq X(i'+1,j)_{k}X(i',i')_{k}.
	\end{align*}
	
	If $j=j'+1=i'+1$ then we may transpose the element just obtained, but we restore the indices as in Figure \ref{fig:iji'j'}, i.e., after transposing, we use $i\leftrightarrow j$, $i\leftrightarrow j'$.
	Thus,
	\begin{align}
		x=\ord{X(i,j'+1)_{1}}{3}\ord{X(j',j')_{1}}{2}
		+\ord{X(j',j'+1)_{1}}{1}\ord{X(i,j')_{1}}{4}
		\in \ring{T},
		\label{eqn:nondegtlinhthin}
	\end{align}
	with
	\begin{align*}
		\lt(\partial^{2k-1} x) &\doteq X(i,j')_{k}X(j',j'+1)_{k+1},\\
		\lt(\partial^{2k-2} x) &\doteq X(j',j')_{k}X(i,j'+1)_{k}.
	\end{align*}
	
	Lastly, if $i'+1<i$ and $i'+1<j$ (with $i\neq j$), then we start with \eqref{eqn:nondegtlinhshort}:
	\begin{align}
		&X(i'+1,j)_{1}X(i',i')_{1}
		+X(i'+1,i')_{1}X(i',j)_{1}\xrightarrow{X(i,i'+1) }\notag \\
		&x=X(i,j)_{1}X(i',i')_{1}-\ord{X(i'+1,j)_{1}}{2}\ord{X(i,i'+1)_{1}}{5}
		+\ord{X(i,i')_{1}}{6}\ord{X(i',j)_{1}}{1}\in \ring{T}.
		\label{eqn:nondegtlinh}
	\end{align}
	The ordering among the $X$ factors in the first quadratic depends on whether $i>j$ or not, but nonetheless, we can conclude:
	\begin{align*}
		\lt(\partial^{2k-1}x)&\doteq X(i,i')_{k}X(i',j)_{k+1},\\
		\lt(\partial^{2k-2}x)&\doteq X(i,j)_{k}X(i',i')_{k}.
	\end{align*}
	We include elements \eqref{eqn:nondegtlinhshort}, \eqref{eqn:nondegtlinhthin}, \eqref{eqn:nondegtlinh}, and their derivatives in $G_1$.
	\begin{figure}
		\subcaptionbox{Elements from \eqref{eqn:nondegtlinhshort}}[0.3\linewidth]
		{
		\begin{tikzpicture}
			\draw[dotted] (1,0.5) -- (1,2.5);
			\draw[dotted] (2,1) -- (2,2.5);
			\draw[dotted] (3,0.5) -- (3,2.5);
			\draw[dotted] (0.5,1) -- (3.5,1);
			\draw[dotted] (0.5,2) -- (3.5,2);
			\draw[dotted] (0.5,2.5) -- (2.3,0.7);
			\node at (2.5,0.5) {$\scriptstyle{\la{h}}$};
			\node at (0,2) {$\scriptstyle{i'=j'}$};
			\node at (0,1) {$\scriptstyle{i=i'+1}$};
			\node at (1,2.7) {$\scriptstyle{j'=i'}$};
			\node at (2,2.7) {$\scriptstyle{i'+1}$};
			\node at (2.6,2.7) {$\scriptstyle{<}$};
			\node at (3,2.7) {$\scriptstyle{j}$};
			\node[fill=white,inner sep=0.3pt] at (1,1) {${\bullet}$};
			\node[fill=white,inner sep=0.3pt] at (3,2) {${\bullet}$};
			\node[fill=white,inner sep=0.3pt] at (3,1) {${\circ}$};
			\node[fill=white,inner sep=0.3pt] at (1,2) {${\circ}$};
		\end{tikzpicture}
		}
		\hfill
		\subcaptionbox{Elements from \eqref{eqn:nondegtlinhthin}}[0.3\linewidth]
		{
			\begin{tikzpicture}
				\draw[dotted] (1,0.5) -- (1,3.5);
				\draw[dotted] (2,0.5) -- (2,3.5);
				\draw[dotted] (0.5,1) -- (2.5,1);
				\draw[dotted] (0.5,2) -- (2,2);
				\draw[dotted] (0.5,3) -- (2.5,3);
				\draw[dotted] (0.5,3.5) -- (2.3,1.7);
				\node at (2.5,1.5) {$\scriptstyle{\la{h}}$};
				\node at (0,3) {$\scriptstyle{i'=j'}$};
				\node[rotate=-90] at (0,1.5) {$\scriptstyle{<}$};
				\node at (0,1) {$\scriptstyle{i}$};
				\node at (0,2) {$\scriptstyle{j'+1}$};
				\node at (1,3.7) {$\scriptstyle{j'=i'}$};
				\node at (2,3.7) {$\scriptstyle{j=j'+1}$};
				\node[fill=white,inner sep=0.3pt] at (1,1) {${\bullet}$};
				\node[fill=white,inner sep=0.3pt] at (2,3) {${\bullet}$};
				\node[fill=white,inner sep=0.3pt] at (2,1) {${\circ}$};
				\node[fill=white,inner sep=0.3pt] at (1,3) {${\circ}$};
			\end{tikzpicture}
		}
		\hfill
		\subcaptionbox{Elements from \eqref{eqn:nondegtlinh}}[0.3\linewidth]
		{
			\begin{tikzpicture}
				\draw[dotted] (1,0.5) -- (1,3.5);
				\draw[dotted] (2,0.5) -- (2,3.5);
				\draw[dotted] (3.5,0.5) -- (3.5,3.5);
				\draw[dotted] (0.5,1) -- (3.7,1);
				\draw[dotted] (0.5,2) -- (3.7,2);
				\draw[dotted] (0.5,3) -- (3.7,3);
				\draw[dotted] (0.5,3.5) -- (2.3,1.7);
				\node at (2.5,1.5) {$\scriptstyle{\la{h}}$};
				\node at (0,3) {$\scriptstyle{i'=j'}$};
				\node at (0,2) {$\scriptstyle{i'+1}$};
				\node[rotate=-90] at (0,1.5) {$\scriptstyle{<}$};
				\node at (0,1) {$\scriptstyle{i}$};
				\node at (1,3.7) {$\scriptstyle{j'=i'}$};
				\node at (2,3.7) {$\scriptstyle{i'+1}$};
				\node at (2.75,3.7) {$\scriptstyle{<}$};
				\node at (3.5,3.7) {$\scriptstyle{j}$};
				\node[fill=white,inner sep=0.3pt] at (1,1) {${\bullet}$};
				\node[fill=white,inner sep=0.3pt] at (3.5,3) {${\bullet}$};
				\node[fill=white,inner sep=0.3pt] at (2,1) {${\star}$};
				\node[fill=white,inner sep=0.3pt] at (3.5,2) {${\star}$};
				\node[fill=white,inner sep=0.3pt] at (1,3) {${\circ}$};
				\node[fill=white,inner sep=0.3pt] at (3.5,1) {${\circ}$};
			\end{tikzpicture}
		}
		\caption{Non-degenerate box, top-left in $\la{h}$}
		\label{fig:nondegtlinh}
	\end{figure}
	
	\subsection{Non-degenerate and left-bottom in \texorpdfstring{$\la{h}$}{h}}
	We start with the element obtained as the semi-degenerate horizontal element with left end-point on $\la{h}$ \eqref{eqn:semideghorizh} and commute as appropriate:
	\begin{align}
		&X(i,j)_{1}\sum_{i\leq s<j}X(i,i)_{1}\xrightarrow{X(i',i)}\notag\\
		&x=\ord{X(i',j)_{1}}{1}\sum_{i\leq s<j}\ord{X(s,s)_{1}}{\downarrow}+ \ord{X(i,j)_{1}}{3}\ord{X(i',i)_{1}}{2}\in \ring{T},
		\label{eqn:nondeglbinh}
	\end{align}
	where $\downarrow$ depicts factors lower than the factor marked $3$ (see Figure \ref{fig:nondeglbinh}).
	Comparing various leading terms using Lemma \ref{lem:quadder},
	\begin{align*}
		\lt(\partial^{2k-1}x) &\doteq X(i,i)_{k}X(i',j)_{k+1},\\
		\lt(\partial^{2k-2}x) &\doteq X(i',i)_{k}X(i,j)_{k}.
	\end{align*}
	We include in $G_1$  elements of type \eqref{eqn:nondeglbinh} and their derivatives.
	\begin{figure}
		\begin{tikzpicture}
			\draw[dotted] (0.5,2) -- (2.5,2);
			\draw[dotted] (0.5,1) -- (2.5,1);
			\draw[dotted] (1,-0.5) -- (1,2.5);
			\draw[dotted] (2,-0.5) -- (2,2.5);
			\draw[dotted] (0.5,1.5) -- (2.3,-0.3);
			\node at (2.5,-0.5) {$\scriptstyle{\la{h}}$};
			\node at (0,2) {$\scriptstyle{i'}$};
			\node[rotate=-90] at (0,1.5) {$\scriptstyle{<}$};
			\node at (0,1) {$\scriptstyle{i=j'}$};
			\node at (1,2.7) {$\scriptstyle{j'=i}$};
			\node at (1.6,2.7) {$\scriptstyle{<}$};
			\node at (2,2.7) {$\scriptstyle{j}$};
			\node[fill=white, inner sep=0.3pt] at (2,2) {$\bullet$};
			\node[fill=white, inner sep=0.3pt] at (1,1) {$\bullet$};
			\node[fill=white, inner sep=0.3pt] at (1,2) {$\circ$};
			\node[fill=white, inner sep=0.3pt] at (2,1) {$\circ$};
			\node[fill=white, inner sep=0.3pt] at (1.2,0.8) {\textcolor{gray}{$\bullet$}};
			\node[fill=white, inner sep=0.3pt] at (1.4,0.6) {\textcolor{gray}{$\bullet$}};
			\node[fill=white, inner sep=0.3pt] at (1.6,0.4) {\textcolor{gray}{$\bullet$}};
			\node[fill=white, inner sep=0.3pt] at (1.8,0.2) {\textcolor{gray}{$\bullet$}};
		\end{tikzpicture}			
		\caption{Non-degenerate and left-bottom in $\la{h}$}
		\label{fig:nondeglbinh}
	\end{figure}
	
	\subsection{Non-degenerate and top-right in \texorpdfstring{$\la{h}$}{h}}
	We now assume that $i'=j$, and $i'<i$, $j'<j$.
	These elements are obtained by transposing the ones from the previous section.
	
	Thus, we get:
	\begin{align}
		&x=\ord{X(i,j')_{1}}{4}\sum_{i'\leq s<i}\ord{X(s,s)_{1}}{\uparrow} + \ord{X(i,j)_{1}}{2}\ord{X(i',j')_{1}}{3}\in \ring{T},
		\label{eqn:nondegtrinh}
	\end{align}
	where $\uparrow$ depicts factors higher than the factor marked $2$.
	For the ordering of elements, see Figure \ref{fig:nondegtrinh}.
	We see:
	\begin{align*}
		\lt(\partial^{2k-1}x) &\doteq X(i,j')_{k}X(i',i')_{k+1},\\
		\lt(\partial^{2k-2}x) &\doteq X(i,j)_{k}X(i',j')_{k}.
	\end{align*}
	We include in $G_1$ elements \eqref{eqn:nondegtrinh} and its derivatives.
	\begin{figure}
		\begin{tikzpicture}
			\draw[dotted] (0.5,2) -- (3.5,2);
			\draw[dotted] (0.5,1) -- (3.5,1);
			\draw[dotted] (1,0.5) -- (1,2.5);
			\draw[dotted] (2,0.5) -- (2,2.5);
			\draw[dotted] (1.6,2.4) -- (3.3,0.7);
			\node at (3.5,0.5) {$\scriptstyle{\la{h}}$};
			\node at (0,2) {$\scriptstyle{i'=j}$};
			\node[rotate=-90] at (0,1.5) {$\scriptstyle{<}$};
			\node at (0,1) {$\scriptstyle{i}$};
			\node at (1,2.7) {$\scriptstyle{j'}$};
			\node at (1.4,2.7) {$\scriptstyle{<}$};
			\node at (2,2.7) {$\scriptstyle{j=i'}$};
			\node[fill=white, inner sep=0.3pt] at (2,2) {$\bullet$};
			\node[fill=white, inner sep=0.3pt] at (1,1) {$\bullet$};
			\node[fill=white, inner sep=0.3pt] at (1,2) {$\circ$};
			\node[fill=white, inner sep=0.3pt] at (2,1) {$\circ$};
			\node[fill=white, inner sep=0.3pt] at (2.2,1.8) {\textcolor{gray}{$\bullet$}};
			\node[fill=white, inner sep=0.3pt] at (2.4,1.6) {\textcolor{gray}{$\bullet$}};
			\node[fill=white, inner sep=0.3pt] at (2.6,1.4) {\textcolor{gray}{$\bullet$}};
			\node[fill=white, inner sep=0.3pt] at (2.8,1.2) {\textcolor{gray}{$\bullet$}};
		\end{tikzpicture}			
		\caption{Non-degenerate and top-right in $\la{h}$}
		\label{fig:nondegtrinh}
	\end{figure}
	
	\subsection{Summary of elements obtained thus far} For the boxes considered up to now,  we have produced elements of $\ring{T}$ such that their odd order derivatives have as leading monomials quadratics whose factors lie on the top-right and bottom-left corners as in Figure \ref{fig:boxes}. This pattern will continue to hold below.

	Crucially, the leading monomial of even order derivative of such an  element is a quadratic whose factors lie on the top-left and bottom-right corners of the \emph{same} box. There will be several exceptions to this pattern below.
	
	\subsection{Non-degenerate and bottom-right in \texorpdfstring{$\la{h}^e$}{h e}} Lastly, we consider the case where $i=j$ (and we allow $i=j=n$), but that $i'\neq j'$, $i'<i$, $j'<j$. See Figure \ref{fig:nondegbrinhe} for this section.

	First, if $i=j=i'+1$, we start with the element obtained in \eqref{eqn:semideghorizh} with indices adjusted. Note that $j'\neq i'$ and $j'<j=i'+1$ implies that $j'<i'$. 
	Then,
	\begin{align}
		&X(i',j')_{1}X(i',i'+1)_{1}\xrightarrow{X(i'+1,i')}\notag\\
		&x=\ord{X(i'+1,j')_{1}}{4}
		\ord{X(i',i'+1)_{1}}{1} -  \ord{X(i',j')_{1}}{3}\ord{X(i',i')_{1}}{2}\in \ring{T},
		\label{eqn:nondegbrinheshort}
	\end{align}
	Note that here $1\leq i'\leq n-1$ since $i=i'+1$ and $i\leq n$.
	We have:
	\begin{align*}
		\lt(\partial^{2k-1} x) &\doteq X(i'+1,j')_{k}X(i',i'+1)_{k+1},\\
		\lt(\partial^{2k-2} x) &\doteq X(i',i')_{k}X(i',j')_{k+1}.
	\end{align*}
	\subsubsection*{Summary} Here we observe an interesting phenomenon. If we have a non-degenerate box of height $1$ with bottom-right corner in $\la{h}^e$ (and no other corners on $\la{h}^e$), then the  even ordered derivatives of the corresponding elements have as leading monomials quadratics that lie on semi-degenerate horizontal boxes with right end points on $\la{h}$.
	
	If $i=j=j'+1$, we just use the transpose of the element obtained above, but restore the indices as in Figure \ref{fig:iji'j'}:
	\begin{align}
		x=\ord{X(i',j'+1)_{1}}{1}
		\ord{X(j'+1,j')_{1}}{4} -  \ord{X(i',j')_{1}}{2}\ord{X(j',j')_{1}}{3}\in \ring{T},
		\label{eqn:nondegbrinhethin}
	\end{align}
	and 
	\begin{align*}
		\lt(\partial^{2k-1} x) \doteq X(j'+1,j')_{k}X(i',j'+1)_{k+1},\\
		\lt(\partial^{2k-2} x) \doteq X(i',j')_{k}X(j',j')_{k}.
	\end{align*}		
	Here as well, $1\leq j'\leq n-1$ since $j=j'+1$ and $j\leq n$.

	\subsubsection*{Summary} If we have a non-degenerate box of width $1$ with bottom-right corner in $\la{h}^e$ (and no other corners on $\la{h}^e$) then the corresponding elements of  $\ring{T}$ we have produced have the following property.
	The leading monomials of their even ordered derivatives give rise to quadratics whose factors lie on semi-degenerate vertical boxes with bottom end points on $\la{h}$.

	Lastly, if $i'+1<i=j$ and $j'+1<i=j$, we start with $X(i',j')_{1}X(i',i)_{1}\in T$.
	\begin{align}
		&X(i',j')_{1}X(i',i)_{1}\xrightarrow{X(i-1,i')}
		X(i-1,j')_{1}X(i',i)_{1}+X(i',j')_{1}X(i-1,i)_{1}\xrightarrow{X(i,i-1)}\notag\\
		&x=\ord{X(i,j')_{1}}{6}
		\ord{X(i',i)_{1}}{1} - 
		\ord{X(i-1,j')_{1}}{5}
		\ord{X(i',i-1)_{1}}{2} -X(i',j')_{1}
		X(i-1,i-1)_{1}\in \ring{T}
		\label{eqn:nondegbrinhe}
	\end{align}
	The order of the factors in the last term depends on whether $i'>j'$ or not. Regardless, we see:
	\begin{align*}
		\lt(\partial^{2k-1} x) &\doteq X(i,j')_{k}X(i',i)_{k+1},\\
		\lt(\partial^{2k-2} x) &\doteq X(i',j')_{k}X(i-1,i-1)_{k}.
	\end{align*}
	We include in $G_1$ elements \eqref{eqn:nondegbrinheshort}, \eqref{eqn:nondegbrinhethin}, \eqref{eqn:nondegbrinhe}, and their derivatives.
	\subsubsection*{Summary} Elements corresponding to non-degenerate boxes whose bottom-right corners (and no other corners) are on $\la{h}^e$ and whose height and width are at least $2$ have the following property. The totality of leading monomials of their even ordered derivatives correspond to top-left and bottom-right corners of non-degenerate boxes potentially having width or height $1$ whose bottom-right corner (and no other corner) is in $\la{h}$.
	\begin{figure}
		\subcaptionbox{Elements from \eqref{eqn:nondegbrinheshort}}[0.3\linewidth]
		{
			\begin{tikzpicture}
				\draw[dotted] (0.5,1) -- (3.3,1); 
				\draw[dotted] (0.5,2) -- (3.3,2);
				\draw[dotted] (1,0.5) -- (1,2.5);
				\draw[dotted] (2,1.5) -- (2,2.5);
				\draw[dotted] (3,0.5) -- (3,2.5);
				\draw[dotted] (1.6,2.4) -- (3.2,0.8);
				\node at (3.4,0.6) {$\scriptstyle{\la{h}^e}$};
				\node at (0.2,2) {$\scriptstyle{i'}$};
				\node[align=center] at (0.2,1) {$\scriptstyle{i=j}$\\ $\scriptstyle{=i'+1}$};
				\node at (1,2.7) {$\scriptstyle{j'}$};
				\node at (1.5,2.7) {$\scriptstyle{<}$};
				\node at (2,2.7) {$\scriptstyle{i'}$};
				\node[align=center] at (3,2.7) {$\scriptstyle{j=i}$\\$\scriptstyle{=i'+1}$};
				\node[fill=white, inner sep=0.3pt] at (3,2) {$\bullet$};
				\node[fill=white, inner sep=0.3pt] at (1,1) {$\bullet$};
				\node[fill=white, inner sep=0.3pt] at (1,2) {$\circ$};
				\node[fill=white, inner sep=0.3pt] at (2,2) {$\circ$};
			\end{tikzpicture}					
		}			
		\quad
		\subcaptionbox{Elements from \eqref{eqn:nondegbrinhethin}}[0.3\linewidth]
		{
			\begin{tikzpicture}
				\draw[dotted] (0.5,1) -- (1.5,1); 
				\draw[dotted] (0.5,2) -- (2.5,2);
				\draw[dotted] (0.5,0) -- (2.5,0);
				\draw[dotted] (1,-0.5) -- (1,2.5);
				\draw[dotted] (2,-0.5) -- (2,2.5);
				\draw[dotted] (0.5,1.5) -- (2.3,-0.3);
				\node at (2.5,-0.5) {$\scriptstyle{\la{h}^e}$};
				\node at (0.2,2) {$\scriptstyle{i'}$};
				\node[rotate=-90] at (0.2,1.5) {$\scriptstyle{<}$};
				\node at (0.2,1) {$\scriptstyle{j'}$};
				\node[align=center] at (0.2,0) {$\scriptstyle{i=j}$\\$\scriptstyle{=j'+1}$};
				\node at (1,2.7) {$\scriptstyle{j'}$};
				\node at (2,2.7) {$\scriptstyle{j=i=j'+1}$};
				\node[fill=white, inner sep=0.3pt] at (2,2) {$\bullet$};
				\node[fill=white, inner sep=0.3pt] at (1,0) {$\bullet$};
				\node[fill=white, inner sep=0.3pt] at (1,2) {$\circ$};
				\node[fill=white, inner sep=0.3pt] at (1,1) {$\circ$};
			\end{tikzpicture}				
		}			
		\quad 
		\subcaptionbox{Elements from \eqref{eqn:nondegbrinhe}}[0.3\linewidth]
		{
			\begin{tikzpicture}
				\draw[dotted] (0.5,0.5) -- (2,0.5)--(2,2.6); 
				\draw[dotted] (0.5,2) -- (3,2);
				\draw[dotted] (0.5,0) -- (3,0);
				\draw[dotted] (1,-0.5) -- (1,2.5);
				\draw[dotted] (2.5,-0.5) -- (2.5,2.3);
				\draw[dotted] (1.6,0.9) -- (2.8,-0.3);
				\node at (3,-0.5) {$\scriptstyle{\la{h}^e}$};
				\node at (0.2,2) {$\scriptstyle{i'}$};
				\node[rotate=-90] at (0.2,1.25) {$\scriptstyle{<}$};
				\node at (0.2,0.5) {$\scriptstyle{i-1}$};
				\node[align=center] at (0.2,0) {$\scriptstyle{i=j}$};
				\node at (1,2.7) {$\scriptstyle{j'}$};
				\node at (1.5,2.7) {$\scriptstyle{<}$};
				\node at (2,2.7) {$\scriptstyle{j-1}$};
				\node at (2.5,2.4) {$\scriptstyle{j=i}$};
				\node[fill=white, inner sep=0.3pt] at (2.5,2) {$\bullet$};
				\node[fill=white, inner sep=0.3pt] at (1,0) {$\bullet$};
				\node[fill=white, inner sep=0.3pt] at (1,2) {$\circ$};
				\node[fill=white, inner sep=0.3pt] at (2,0.5) {$\circ$};
				\node[fill=white, inner sep=0.3pt] at (1,0.5) {$\star$};
				\node[fill=white, inner sep=0.3pt] at (2,2) {$\star$};
			\end{tikzpicture}				
		}			
		\caption{Non-degenerate box, bottom-right corner in $\la{h}^e$}
		\label{fig:nondegbrinhe}
	\end{figure}

	\subsection{Two locations on \texorpdfstring{$\la{h}^e$}{h e}}
	Lastly, we take care of two locations being on $\la{h}^e$. So, we assume that $i'=j'<i=j$. Figure \ref{fig:twoinhe} refers to this section.
	
	If $i=i'+1$ (with $i=n$ being allowed), we start with the element obtained in \eqref{eqn:semideghorizh} where the box has height $1$ and  adjust the indices:
	\begin{align}
		&X(i',i'+1)_{1}X(i',i')_{1}\xrightarrow{X(i'+1,i')} \notag\\
		&x=- 
		\underbracket{X(i',i')_{1}}_{\circnum{2}}
		\underbracket{X(i',i')_{1}}_{\circnum{2}}+
		2\underbracket{X(i',i'+1)_{1}}_{\circnum{1}}
		\underbracket{X(i'+1,i')_{1}}_{\circnum{3}} \in \ring{T}.
		\label{eqn:twoinhesmall}
	\end{align}
	We thus have that:
	\begin{align*}
		\lt(\partial^{2k-1} x) &\doteq X(i'+1,i')_{k}X(i',i'+1)_{k+1},\\
		\lt(\partial^{2k-2} x) &\doteq X(i',i')_{k}X(i',i')_{k}.
	\end{align*}

	However, if $i'+1<i$ (here, $i=n$ is allowed), we need a more complicated element.
	We start with an element of the type obtained in \eqref{eqn:nondeglbinh} with indices adjusted:
	\begin{align}
		&X(i',i-1)_{1}X(i-1,i)_{1}+X(i',i)_{1}X(i-1,i-1)_{1}\notag \\
		&\xrightarrow{X(i-1,i')}
		(E(i-1,i-1)_{1}-E(i',i')_{1})X(i-1,i)_{1}\notag\\
		&\quad+ X(i-1,i)_{1}X(i-1,i-1)_{1}
		- X(i',i)_{1}X(i-1,i')_{1}\notag\\
		&\xrightarrow{X(i,i-1)}
		X(i,i-1)_{1}X(i-1,i)_{1}
		-(E(i-1,i-1)_{1}-E(i',i')_{1})X(i-1,i-1)_{1}\notag\\
		&\quad-X(i-1,i-1)_{1}X(i-1,i-1)_{1}
		+2X(i-1,i)_{1}X(i,i-1)_{1}\notag\\
		&\quad +X(i',i-1)_{1}X(i-1,i')_{1}-X(i',i)_{1}X(i,i')_{1}\notag\\
		&= - X(i-1,i-1)_{1}^2 + \sum_{i'\leq  s< i-1}X(s,s)_{1}X(i-1,i-1)_{1}  
		+ 3X(i,i-1)_{1}X(i-1,i)_{1}\notag\\
		&\quad+X(i',i-1)_{1}X(i-1,i')_{1} -\underbracket{X(i',i)_{1}}_{\circnum{\uparrow}}
		\underbracket{X(i,i')_{1}}_{\circnum{\downarrow}} = x \in \ring{T}.
		\label{eqn:twoinhe}
	\end{align}
	Where the up and down arrows signify the highest and lowest $X$ elements, respectively, in the entire expression.
	It is not too hard to see that:
	\begin{align*}
		\lt(\partial^{2k-1}x)&\doteq X(i,i')_{k}X(i',i)_{k+1},\\
		\lt(\partial^{2k-2}x)&\doteq X(i',i')_{k}X(i-1,i-1)_{k}.
	\end{align*}
	Finally, we enlarge $G_1$ by including elements \eqref{eqn:twoinhesmall}, \eqref{eqn:twoinhe}, and their derivatives of all orders. This concludes our construction of $G_1$.
	\begin{figure}
		\subcaptionbox{Elements from \eqref{eqn:twoinhesmall}}[0.43\linewidth]
		{
			\begin{tikzpicture}
				\draw[dotted] (0.5,1) -- (2.5,1);
				\draw[dotted] (0.5,2) -- (2.5,2);
				\draw[dotted] (1,0.5) -- (1,2.5);
				\draw[dotted] (2,0.5) -- (2,2.5);
				\draw[dotted] (0.5,2.5) -- (2.2,0.8);
				\node at (2.5,0.5) {$\scriptstyle{\la{h}^e}$};
				\node at (0,2) {$\scriptstyle{i'}$};
				\node at (0,1) {$\scriptstyle{i=i'+1}$};
				\node at (1,3) {$\scriptstyle{j'=i'}$};
				\node[align=center] at (2,3) {$\scriptstyle{j=i}$\\$\scriptstyle{=i'+1}$};
				\node[fill=white,inner sep=0.3pt] at (2,2) {${\bullet}$};
				\node[fill=white,inner sep=0.3pt] at (1,1) {${\bullet}$};
				\node[fill=white,inner sep=0.3pt] at (1,2) {${\circ}$};
			\end{tikzpicture}
		}			
		\subcaptionbox{Elements from \eqref{eqn:twoinhe}}[0.43\linewidth]
		{
				\begin{tikzpicture}
				\draw[dotted] (0.5,1) -- (3,1);
				\draw[dotted] (0.5,0.5) -- (3,0.5);
				\draw[dotted] (0.5,2) -- (3,2);
				\draw[dotted] (1,0) -- (1,2.7);
				\draw[dotted] (2,0) -- (2,2.7);
				\draw[dotted] (2.5,0) -- (2.5,2.3);
				\draw[dotted] (0.5,2.5) -- (2.8,0.2);
				\node at (3,0) {$\scriptstyle{\la{h}^e}$};
				\node at (0.2,2) {$\scriptstyle{i'}$};
				\node[rotate=-90] at (0.2,1.5) {$\scriptstyle{<}$};
				\node at (0.2,1) {$\scriptstyle{i-1}$};
				\node at (0.2,0.5) {$\scriptstyle{i}$};
				\node at (1,3) {$\scriptstyle{j'=i'}$};
				\node at (1.5,3) {$\scriptstyle{<}$};
				\node[align=center] at (2.5,2.5) {$\scriptstyle{j=i}$};
				\node[align=center] at (2,3) {$\scriptstyle{i-1}$};
				\node[fill=white,inner sep=0.3pt] at (2.5,2) {${\bullet}$};
				\node[fill=white,inner sep=0.3pt] at (1,0.5) {${\bullet}$};
				\node[fill=white,inner sep=0.3pt] at (1,2) {${\circ}$};
				\node[fill=white,inner sep=0.3pt] at (2,1) {${\circ}$};
				\node[fill=white,inner sep=0.3pt] at (2.5,1) {${\times}$};
				\node[fill=white,inner sep=0.3pt] at (2,0.5) {${\times}$};
				\node[fill=white,inner sep=0.3pt] at (2,2) {${\star}$};
				\node[fill=white,inner sep=0.3pt] at (1,1) {${\star}$};
				\node[fill=white,inner sep=0.3pt] at (1.2,1.8) {$\textcolor{gray}{\circ}$};
				\node[fill=white,inner sep=0.3pt] at (1.4,1.6) {$\textcolor{gray}{\circ}$};
				\node[fill=white,inner sep=0.3pt] at (1.6,1.4) {$\textcolor{gray}{\circ}$};
				\node[fill=white,inner sep=0.3pt] at (1.8,1.2) {$\textcolor{gray}{\circ}$};
			\end{tikzpicture}
		}			
		\caption{Two locations in $\la{h}^e$}
		\label{fig:twoinhe}
	\end{figure}

	\subsubsection*{Summary} In this case, we again observe that leading terms of odd ordered and even ordered derivatives of the elements do not always correspond to the corners of the same box.  However, if we consider totality of elements obtained, then the leading terms of their odd ordered derivatives do exhaust non-degenerate boxes whose two corners are on $\la{h}^e$. The totality of leading terms of even ordered derivatives of these elements correspond to top-left and bottom-right corners of possibly degenerate boxes whose two corners are in $\la{h}$.

	Now that we have found enough elements of $\ring{T}$, we make sure that they form a basis of $\ring{T}$. 
	
	\begin{lem}
		We have $\dim(\ring{T})=\frac{1}{4}n^2(n+3)(n-1)$.
	\end{lem}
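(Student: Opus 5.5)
The plan is to compute $\dim(\ring{T})$ by representation theory. By the statement recalled after \eqref{eqn:RLgk}, $\ring{T}=\mathrm{ad}(\la{U}(\la{g}))\cdot X(1,n)_1^2$ is the irreducible $\la{sl}_n$-module of highest weight $2\theta$. Here $X(1,n)_1^2$ is a highest weight vector: it is killed by $\la{n}_+$, since $X(1,n)$ commutes with every element of $\la{n}_+$, and its $\la{h}$-weight is $2\theta=2(\epsilon_1-\epsilon_n)$. So the task reduces to computing $\dim V(2\theta)$ for $\la{sl}_n$. In fundamental-weight coordinates, $2\theta=2\omega_1+2\omega_{n-1}$.

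First, I would apply the Weyl dimension formula $\prod_{\alpha>0}\frac{\langle\lambda+\rho,\alpha\rangle}{\langle\rho,\alpha\rangle}$ with $\lambda=2\epsilon_1-2\epsilon_n$. For a positive root $\epsilon_a-\epsilon_b$ with $a<b$, the factor is $\frac{(b-a)+\lambda_a-\lambda_b}{b-a}$. The factors that differ from $1$ are:
\begin{itemize}
\item the roots $(1,b)$ with $1<b<n$, each giving $\frac{b+1}{b-1}$;
\item the roots $(a,n)$ with $1<a<n$, each giving $\frac{n-a+2}{n-a}$;
\item the root $(1,n)$, giving $\frac{n+3}{n-1}$.
\end{itemize}
The product over $2\le b\le n-1$ of $\frac{b+1}{b-1}$ telescopes to $\frac{n(n-1)}{2}$, and the second family gives the same value. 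The total is therefore
\begin{align*}
\frac{n^2(n-1)^2}{4}\cdot\frac{n+3}{n-1}=\frac{1}{4}n^2(n+3)(n-1),
\end{align*}
as claimed.

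As a cross-check, I would use the decomposition $\mathrm{Sym}^2(\la{sl}_n)=V(2\theta)\oplus V(\omega_2+\omega_{n-2})\oplus V(\theta)\oplus\CC$ for $n\ge4$ and compare dimensions. The same check works for $n=2,3$ with the appropriate modifications: for $n=2$ the formula gives $5$, and for $n=3$ it gives $27$, both correct.

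The main subtlety is less the arithmetic than presentation. The lemma is presumably intended to match the count of elements constructed in Section~\ref{sec:quad}, where one box corresponds to each element, so that linear independence (from distinct leading terms) plus this count shows that they form a basis. That counting of boxes is a separate step after the lemma and is not needed for the lemma itself. The only care needed is to justify that the $\la{U}(\la{g})$-span is irreducible, which follows from $\ring{T}$ being a cyclic module generated by a highest weight vector inside the finite-dimensional, hence completely reducible, module $\mathrm{Sym}^2(\la{g})$.
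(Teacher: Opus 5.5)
Your proposal is correct and follows the paper's proof. Both apply the Weyl dimension formula to $2\theta=2\Lambda_1+2\Lambda_{n-1}$, isolate the same three families of contributing positive roots ($\epsilon_1-\epsilon_b$ for $1<b<n$, $\epsilon_a-\epsilon_n$ for $1<a<n$, and $\theta$ itself), and telescope the resulting product. Your extra justification that $X(1,n)_1^2$ is a highest weight vector generating an irreducible submodule, and your $\mathrm{Sym}^2(\la{sl}_n)$ dimension cross-check, are sound additions that the paper takes for granted.
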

	\begin{proof}
		Recall that $\ring{T}\cong L(2\theta)$.
		Note that $2\theta=2(\alpha_1+\cdots+\alpha_{n-1})=2\Lambda_1+2\Lambda_{n-1}$ in terms of fundamental weights.
		The only positive roots which pair non-trivially with $2\theta$ are thus:
		$$ S=  \{ \alpha_1+\cdots+\alpha_j\,|\, 1\leq j<n-1\} \cup \{\alpha_1+\cdots+\alpha_{n-1}\}\cup
		\{ \alpha_j+\cdots+\alpha_{n-1} \,|\, 1<j\leq n-1\}.$$
		Thus, by Weyl's dimension formula,
		\begin{align*}
			\dim(\ring{T}) = 
			\prod_{\alpha\in S}\frac{\langle 2\Lambda_1+ 2\Lambda_{n-1}+\rho ,\alpha\rangle}{\langle \rho ,\alpha\rangle}
			=\dfrac{ \left(\prod_{j=1}^{n-2} (j+2)\right) (n+3)\left(\prod_{j=2}^{n-1} (n-j+2)\right) }{\left(\prod_{j=1}^{n-2} j\right) (n-1)\left(\prod_{j=2}^{n-1} (n-j)\right)},
		\end{align*}
		which can be easily seen to equal the required quartic.
	\end{proof}
	
	\begin{prop}
		The elements found in this section form a basis of $\ring{T}$.
	\end{prop}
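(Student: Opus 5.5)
Every element constructed in this section lies in $\ring{T}$ by construction: each is obtained from $v$, $v^{\tr}$ or an earlier element of $\ring{T}$ by applying $\mathrm{ad}$ of elements of $\la{g}$, or by transposing (Lemma~\ref{lem:transpose}). Since $\ring{T}$ is finite-dimensional and its dimension is known from the previous lemma, it suffices to show two things. First, the number of constructed elements equals $\frac14 n^2(n+3)(n-1)$. Second, these elements are linearly independent.

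\textbf{Linear independence.} All constructed elements are quadratics in the weight-one variables $X(i,j)_1$. I would distinguish them by their leading terms under the first derivative $\partial$. By the lemmas quoted throughout the section (Lemma~\ref{lem:quadder} with $k=1$), each element $x$ has $\lt(\partial x)\doteq X(a,b)_1X(c,d)_2$. Here $(c,d)$ is the top-right corner and $(a,b)$ the bottom-left corner of the box attached to $x$.

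A box is determined by the ordered pair (top-right corner, bottom-left corner), subject to the conventions for the degenerate cases. I would check that this map from allowed boxes to monomials $X(a,b)_1X(c,d)_2$ is injective. The key point is that $X(a,b)_1X(c,d)_2$ records both corners as an ordered pair, because the two weights are distinguished.

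Assuming injectivity, the elements $\partial x$ have pairwise distinct leading monomials, so they are linearly independent. Hence the $x$ themselves are linearly independent, since $\partial$ is linear.

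\textbf{Counting the allowed boxes.} A box is a choice of rows $i'\le i$ and columns $j'\le j$ in $\{1,\dots,n\}$. There are $\binom{n+1}{2}^2=\frac{n^2(n+1)^2}{4}$ such choices. From these I subtract the excluded boxes:
\begin{itemize}
\item the $n$ completely degenerate boxes on $\la{h}^e$;
\item the $\binom n2$ vertical semi-degenerate boxes with bottom point on $\la{h}^e$ (choose $i'<i$);
\item the $\binom n2$ horizontal ones with right point on $\la{h}^e$.
\end{itemize}
The remaining count is
\[
\frac{n^2(n+1)^2}{4}-n-n(n-1)=\frac{n^2(n+1)^2}{4}-n^2=\frac{n^2\bigl((n+1)^2-4\bigr)}{4}=\frac{n^2(n+3)(n-1)}{4},
\]
which agrees with the previous lemma.

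I then need to confirm that the case analysis of the section covers every allowed box exactly once:
\begin{itemize}
\item completely degenerate boxes off the diagonal;
\item semi-degenerate horizontal and vertical boxes, with or without the left or top endpoint on $\la{h}$;
\item non-degenerate boxes with no corner on $\la{h}^e$;
\item non-degenerate boxes with exactly the top-left, bottom-left, top-right or bottom-right corner on the diagonal;
\item non-degenerate boxes with two corners on the diagonal.
\end{itemize}
Two corners can lie on the diagonal only as top-left and bottom-right. If the bottom-left and top-right corners were both diagonal, we would have $i=j'$ and $i'=j$, which contradicts $i'<i$ and $j'<j$. Similar arguments rule out other corner combinations.

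\textbf{Main obstacle.} The main work is verifying that the stated odd-derivative leading terms are correct in the tricky cases, namely \eqref{eqn:nondegtlinh}, \eqref{eqn:nondegbrinhe} and \eqref{eqn:twoinhe}. There the leading term comes from a corner pair whose factors are the overall highest and lowest colours, and I would confirm this using Remark~\ref{rem:toprightimpliesgreater}. In \eqref{eqn:twoinhe} the diagonal terms must be expanded via $E(s,s)-E(s+1,s+1)$, but the product $X(i',i)_1X(i,i')_1$ still contains the overall extreme colours. Once this is checked, the bijection between allowed boxes and distinct leading monomials gives both linear independence and the correct count, so the elements form a basis of $\ring{T}$.
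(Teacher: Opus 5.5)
Your proposal is correct and follows essentially the same route as the paper. Linear independence comes from distinct leading monomials of derivatives, one per allowed box, and then the box count $\frac14 n^2(n+1)^2-n-n(n-1)=\frac14 n^2(n+3)(n-1)$ is matched against $\dim\ring{T}$; your use of $\partial x$, whose leading monomial $X(a,b)_1X(c,d)_2$ records the bottom-left and top-right corners as an ordered pair, is just a more explicit form of the paper's remark that each leading term corresponds to a unique bounding box.
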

	\begin{proof}
		Leading terms of derivatives of the elements we have found are easily seen to be linearly independent (each corresponds to a unique bounding box). So, we just need to count the number of bounding boxes we have considered.
		
		Without considering any exclusions, number of bounding boxes where the left-top corner is in row $i$ and column $j$ is $(n-i+1)(n-j+1)$. Summing over all $1\leq i\leq n$ and $1\leq j\leq n$ gets us $\frac{1}{4}n^2(n+1)^2$.
		
		We have excluded $n$ many totally degenerate bounding boxes lying on $\la{h}^e$. The  number semi-degenerate horizontal boxes which start at the diagonal element $(i,i)$ ($1\leq i\leq n-1$) is $n-i$. So, the total number of such horizontal boxes excluded is $\frac{1}{2}n(n-1)$ and the same number of vertical boxes are also excluded.
		
		Overall, the total number of bounding boxes considered is
		\begin{align*}
			\dfrac{1}{4}n^2(n+1)^2	- n - n(n-1) =\frac{1}{4}n^2(n+3)(n-1),
		\end{align*}
		which is exactly the dimension of $\ring{T}$.
	\end{proof}

	\subsection{Summary of quadratic leading monomials of \texorpdfstring{$G_1$}{G1}}
	\label{sec:ltG1}
	If we consider the chosen basis elements of $\ring{T}$ which we have obtained in this section, we observe the following.

	The totality of leading terms of their odd ordered derivatives correspond to the top-right and bottom-left corners of the boxes considered at the very beginning of the section, see Figure \ref{fig:iji'j'}.

	The totality of leading terms of their even ordered derivatives correspond to the top-left and bottom-right corners of all boxes with the exception that the bottom-right corner is not $X(n,n)$, i.e., boxes corresponding to indices $1\leq i'\leq i\leq n$, $1\leq j'\leq j\leq n$, with $(i,j)\neq (n,n)$, see Figure \ref{fig:boxes2}.
	\begin{figure}
		\subcaptionbox{Leading terms of even order derivatives}[0.4\linewidth]{
			\begin{tikzpicture}[scale=0.8]
				\draw[dotted] (0.5,1) -- (2.5,1);
				\draw[dotted] (0.5,2) -- (2.5,2);
				\draw[dotted] (1,0.5) -- (1,2.5);
				\draw[dotted] (2,0.5) -- (2,2.5);
				\node at (0.2,1) {$\scriptstyle{i}$};
				\node[rotate=-90] at (0.2,1.5) {$\scriptstyle{\leq}$};
				\node at (0.2,2) {$\scriptstyle{i'}$};
				\node at (1,2.8) {$\scriptstyle{j'}$};
				\node at (1.5,2.8) {$\scriptstyle{\leq}$};
				\node at (2,2.8) {$\scriptstyle{j}$};
				\node[fill=white, inner sep=0.3pt] at (1,2) {$\scriptstyle{\bullet}$};
				\node[fill=white, inner sep=0.3pt] at (2,1) {$\scriptstyle{\bullet}$};
				\node at (1.2,1.8) {$\scriptstyle{k}$};
				\node at (2.2,0.8) {$\scriptstyle{k}$};
			\end{tikzpicture}
		}
		\subcaptionbox{Not allowed}[0.4\linewidth]{
			\begin{tikzpicture}[scale=0.8]
			\draw[dotted] (0.5,1) -- (2,1);
			\draw[dotted] (0.5,2) -- (2,2);
			\draw[dotted] (1,1) -- (1,2.5);
			\draw[dotted] (2,1) -- (2,2.5);
			\node at (-0.2,1) {$\scriptstyle{i=j=n}$};
			\node[rotate=-90] at (0.2,1.5) {$\scriptstyle{\leq}$};
			\node at (0.2,2) {$\scriptstyle{i'}$};
			\node at (1,2.8) {$\scriptstyle{j'}$};
			\node at (1.3,2.8) {$\scriptstyle{\leq}$};
			\node at (2,2.8) {$\scriptstyle{j=i=n}$};
			\node[fill=white, inner sep=0.3pt] at (1,2) {$\scriptstyle{\bullet}$};
			\node[fill=white, inner sep=0.3pt] at (2,1) {$\scriptstyle{\bullet}$};
			\end{tikzpicture}
		}
		\caption{Leading terms of even order derivatives}
		\label{fig:boxes2}
	\end{figure}

	\section{\texorpdfstring{$S$}{S}-polynomials}
	\label{sec:cubic}
	Let $n\geq 3$. If $n=2$, we are in the $\la{sl}_2$ case, and our chosen basis along with derivatives already forms a Gr\"obner basis \cite{vanEkeHel-chiralhom}.
	
	We now demonstrate certain $S$-polynomials which do not reduce to $0$ modulo  our set $G_1$.
	This constitutes our second step in the Buchberger's algorithm.
	To this end, we let
	\begin{align*}
		G_2=G_1,
	\end{align*}
	and we shall enlarge this set as we go along.
	
	\begin{nota}
		Ellipses $\cdots$ in various expressions will denote elements that are lower in the order. Elements that have lower shape (in particular, they are also lower in the order) will be denoted by $\cdots_{\sh}$. 
	\end{nota}

	Throughout the rest of the section, $k\in\ZZ_{\geq 1}$.

	\subsection{Weight \texorpdfstring{$3k+1$}{3k+1}}
	We first tackle the $S$-polynomials with weight $3k+1$.
	For this purpose, choose indices $i,j,i',j',l$ such that (see Figure \ref{fig:3k+1}):
	\begin{align}
		1\leq i' \leq l<i=j' < j\leq n.
		\label{eqn:wt3k+1indices}
	\end{align} 
	In this section, our aim is to show that:
	$$X(l,j)_{k}X(i,i)_{k}X(i',j')_{k+1}\in \LT(\ring{I}).$$
	It is clear that such a leading term is indivisible by any of the quadratic leading terms of $G_1$ (see Section \ref{sec:ltG1}). We consider two cases: $i'=l$ and $i'<l$.
	
	\begin{figure}
		\subcaptionbox{Leading of weight $3k+1$	\label{fig:3k+1}}[0.43\linewidth]
		{
			\begin{tikzpicture}
				\draw[dotted] (0.5,0.5) -- (2.5,0.5);
				\draw[dotted] (0.5,1.5) -- (2.5,1.5);
				\draw[dotted] (0.5,2.5) -- (2.5,2.5);
				\draw[dotted] (1,0) -- (1,3);
				\draw[dotted] (2,0) -- (2,3);
				\draw[dotted] (0.5,1) -- (1.5,0);
				\node at (1.7,-0.2) {$\scriptstyle{\la{h}}$};
				\node at (0,0.5) {$\scriptstyle{i=j'}$};
				\node[rotate=-90] at (0,1) {$\scriptstyle{<}$};
				\node at (0,1.5) {$\scriptstyle{l}$};
				\node[rotate=-90] at (0,2) {$\scriptstyle{\leq}$};
				\node at (0,2.5) {$\scriptstyle{i'}$};
				\node at (1,3.2) {$\scriptstyle{j'=i}$};
				\node at (1.5,3.2) {$\scriptstyle{<}$};
				\node at (2,3.2) {$\scriptstyle{j}$};
				\node[fill=white, inner sep=0.3pt] at (1,0.5) {$\scriptstyle{\bullet}$};
				\node[fill=white, inner sep=0.3pt] at (2,1.5) {$\scriptstyle{\bullet}$};
				\node[fill=white, inner sep=0.3pt] at (1,2.5) {$\scriptstyle{\bullet}$};
				\node at (1.2,0.7) {$\scriptstyle{k}$};
				\node at (2.2,1.3) {$\scriptstyle{k}$};
				\node at (1.3,2.3) {$\scriptstyle{k+1}$};
			\end{tikzpicture}
		}
		\subcaptionbox{Leading terms of weight $3k+2$\label{fig:3k+2}}[0.43\linewidth]
		{
			\begin{tikzpicture}
				\draw[dotted] (0.5,0.5) -- (0.5,2.5);
				\draw[dotted] (1.5,0.5) -- (1.5,2.5);
				\draw[dotted] (2.5,0.5) -- (2.5,2.5);
				\draw[dotted] (0,1)--(3,1);
				\draw[dotted] (0,2)--(3,2);
				\draw[dotted] (2,2.5) -- (3,1.5);
				\node at (3.2,1.3) {$\scriptstyle{\la{h}}$};
				\node at (-0.4,1) {$\scriptstyle{i}$};
				\node[rotate=-90] at (-0.4,1.5) {$\scriptstyle{<}$};
				\node at (-0.4,2) {$\scriptstyle{i'=j}$};
				\node at (0.5,2.7) {$\scriptstyle{j'}$};
				\node at (1,2.7) {$\scriptstyle{\leq}$};
				\node at (1.5,2.7) {$\scriptstyle{l}$};
				\node at (2,2.7) {$\scriptstyle{<}$};
				\node at (2.5,2.7) {$\scriptstyle{j=i'}$};
				\node[fill=white, inner sep=0.3pt] at (0.5,2) {$\scriptstyle{\bullet}$};
				\node[fill=white, inner sep=0.3pt] at (1.5,1) {$\scriptstyle{\bullet}$};
				\node[fill=white, inner sep=0.3pt] at (2.5,2) {$\scriptstyle{\bullet}$};
				\node at (0.7,1.8) {$\scriptstyle{k}$};
				\node at (1.1,0.8) {$\scriptstyle{k+1}$};
				\node at (2.1,1.8) {$\scriptstyle{k+1}$};
			\end{tikzpicture}
		}
		\caption{Cubic leading terms in $\LT(\ring{I})$}
		\label{fig:cubic}
	\end{figure}

	\subsubsection{Weight \texorpdfstring{$3k+1$}{3k+1} and \texorpdfstring{$i'=l$}{i'=l}}
	In this section, we assume that $i'=l$.
	
	The first element we consider is a derivative of a horizontal element from \eqref{eqn:semideghoriz}:
	\begin{align*}
		A&=\partial^{[2k-1]}
		(\ord{X(i',j)_{1}}{1}\ord{X(i',j')_{1}}{2})\\
		&=X(i',j')_{k}X(i',j)_{k+1}+X(i',j)_{k}X(i',j')_{k+1}+\cdots_{\sh}
	\end{align*}
 	Next, consider the derivative of the element corresponding to a non-degenerate box with bottom-left corner in $\la{h}$ \eqref{eqn:nondeglbinh}:
	\begin{align*}
		&B=\partial^{[2k-1]}
		\left( \ord{X(i',j')_{1}}{2}\ord{X(i,j)_{1}}{3} +\ord{X(i',j)_{1}}{1}\sum_{i\leq s<j}\ord{X(s,s)_{1}}{\downarrow}\right)\\
		&=X(i,i)_{k}X(i',j)_{k+1}+\sum_{i<s<j}X(s,s)_{k}X(i',j)_{k+1}
		+X(i,j)_{k}X(i',j')_{k+1}\\
		&+X(i',j')_{k}X(i,j)_{k+1}
		+X(i',j)_{k}\sum_{i\leq s<j}X(s,s)_{k+1}
		+\cdots_{\sh},
	\end{align*}
	where we have written the terms in descending order.
	Thus, we see that:
	\begin{align*}
		S(A,B)=&
		-X(i',j')_{k}\sum_{i<s<j}X(s,s)_{k}X(i',j)_{k+1}\\
		&-X(i',j')_{k}X(i,j)_{k}X(i',j')_{k+1}\\
		&+X(i',j)_{k}X(i,i)_{k}X(i',j')_{k+1}\\
		&-X(i',j')_{k}X(i',j')_{k}X(i,j)_{k+1}\\
		&-X(i',j')_{k}X(i',j)_{k}\sum_{i\leq s<j}X(s,s)_{k+1}+\cdots_{\sh},
	\end{align*}
	where it is easy to observe that the terms appear in descending order.
	Our aim is to reduce this $S$-polynomial so that the third term is the highest.
	So, we must eliminate the first and second terms.
	Starting with an element of the type \eqref{eqn:semideghoriz}, we obtain the following element, whose leading monomial is the same as the leading monomial of the $S$-polynomial above:
	\begin{align*}
		&\left(\partial^{[2k-1]}(\ord{X(i',j)_{1}}{1}\ord{X(i',j')_{1}}{2})\right) \left(\sum_{i<s<j}\ord{X(s,s)_{k}}{\downarrow}\right)\\
		&=X(i',j')_{k}\left(\sum_{i<s<j}X(s,s)_{k}\right)X(i',j)_{k+1}\\
		&+X(i',j)_{k}\left(\sum_{i<s<j}X(s,s)_{k}\right)X(i',j')_{k+1}+\cdots_{\sh}.
	\end{align*}
	All terms except the first in this element are strictly lower than the third term in the $S$-polynomial above. So, using this element, we get the reduction:
	\begin{align}
		S(A,B)\xrightarrow{G_1}&
		-X(i',j')_{k}X(i,j)_{k}X(i',j')_{k+1}
		+X(i',j)_{k}X(i,i)_{k}X(i',j')_{k+1}+\cdots.
		\label{eqn:3k+1_l=i_S_step2}
	\end{align}
	We reduce the first term here
	using an element of the type obtained in  \eqref{eqn:deg}:
	\begin{align*}
		\left(\partial^{[2k-1]}(X(i',j')_{1}^2)\right) X(i,j)_k
		=2 X(i',j')_{k}X(i,j)_kX(i',j')_{k+1}+
		\cdots_{\sh}.
	\end{align*}
	
	This reduces our $S$-polynomial to:
	\begin{align}
		S(A,B)\xrightarrow{G_1}&X(i',j)_{k}X(i,i)_{k}X(i',j')_{k+1}+\cdots.
		\label{eqn:S3k+1l=i'red}
	\end{align}
	We enlarge $G_2$ by including this reduction in it.

	\subsubsection{Weight \texorpdfstring{$3k+1$}{3k+1} and \texorpdfstring{$i'<l$}{i'<l}} Our strategy is very similar to before, except that the analogue of the element $A$ is now slightly more complicated.
	
	Constraints of \eqref{eqn:wt3k+1indices} are in force.
	
	We start with derivative of an element of the type \eqref{eqn:nondegnohe}:
	\begin{align*}
		A=&\partial^{[2k-1]}
		\left(
		\ord{X(l,j')_{1}}{4}
		\ord{X(i',j)_{1}}{1}
		+
		\ord{X(l,j)_{1}}{3}
		\ord{X(i',j')_{1}}{2}
		\right)\\
		=&X(l,j')_{k}X(i',j)_{k+1}
		+X(l,j)_{k}X(i',j')_{k+1}\\
		&+X(i',j')_{k}X(l,j)_{k+1}
		+X(i',j)_{k}X(l,j')_{k+1}+\cdots_{\sh}
	\end{align*}
	and the same element $B$ as above:
	\begin{align*}
		&B=\partial^{[2k-1]}
		\left( \ord{X(i',j')_{1}}{2}\ord{X(i,j)_{1}}{3} +\ord{X(i',j)_{1}}{1}\sum_{i\leq s<j}\ord{X(s,s)_{1}}{\downarrow}\right)\\
		&=
		X(i,i)_{k}X(i',j)_{k+1}
		+\sum_{i<s<j}X(s,s)_{k}X(i',j)_{k+1}
		+X(i,j)_{k}X(i',j')_{k+1}\\
		&+X(i',j')_{k}X(i,j)_{k+1}
		+X(i',j)_{k}\sum_{i\leq s<j}X(s,s)_{k+1}
		+\cdots_{\sh}.
	\end{align*}
	Now, we see that:
	\begin{align*}
		S(A,B)=
		&-X(l,j')_{k}\sum_{i<s<j}X(s,s)_{k}X(i',j)_{k+1}\\
		&-X(l,j')_{k}X(i,j)_{k}X(i',j')_{k+1}\\
		&+X(l,j)_{k}X(i,i)_{k}X(i',j')_{k+1}\\
		&+X(i',j')_{k}X(i,i)_{k}X(l,j)_{k+1}\\
		&+X(i',j)_{k}X(i,i)_{k}X(l,j')_{k+1}\\
		&-X(i',j')_{k}X(l,j')_{k}X(i,j)_{k+1}\\
		&-X(i',j)_{k}X(l,j')_{k}\sum_{i\leq s<j}X(s,s)_{k+1}+\cdots_{\sh},
	\end{align*}
	where the elements appear in the decreasing order. Indeed, 
	this ordering  (except among second and third terms) can be quickly seen by comparing the factors of weight $k+1$. For the second and third terms, note that:
	\begin{align*}
		X(l,j)_{k}\succ X(l,j')_{k} \succ X(i,j)_{k} \succ X(i,i)_{k}.
	\end{align*}
	Our aim is to reduce this $S$-polynomial so that the third term is the largest in the reduction. Thus, we must only reduce the first and the second terms.
	We reduce the highest term by starting from an element of type \eqref{eqn:nondegnohe}:
	\begin{align*}
		&\left(\partial^{[2k-1]}(
		\ord{X(l,j')_{1}}{4}\ord{X(i',j)_{1}}{1}
		+\ord{X(l,j)_{1}}{3}\ord{X(i',j')_{1}}{2})\right)
		\left(\sum_{i<s<j}X(s,s)_{k}\right)\\
		&=X(l,j')_{k}\left(\sum_{i<s<j}X(s,s)_{k}\right)X(i',j)_{k+1}
		+X(l,j)_{k}\left(\sum_{i<s<j}X(s,s)_{k}\right)X(i',j')_{k+1}\\
		&+X(i',j')_{k}\left(\sum_{i<s<j}X(s,s)_{k}\right)X(l,j)_{k+1}
		+X(i',j)_{k}\left(\sum_{i<s<j}X(s,s)_{k}\right)X(l,j')_{k+1}\\
		&+\cdots_{\sh}.
	\end{align*}
	Now, the leading monomial here is the same as the highest monomial in the $S$-polynomial, and rest of the terms are strictly  lower than the third term in the $S$-polynomial.
	Thus, we can reduce the $S$-polynomial to:
	\begin{align*}
		S(A,B)\xrightarrow{G_1}
		&-X(l,j')_{k}X(i,j)_{k}X(i',j')_{k+1}
		+X(l,j)_{k}X(i,i)_{k}X(i',j')_{k+1}+\cdots.
	\end{align*}		
	We now reduce the first term on the right.
	For this, we use  an element of type \eqref{eqn:semidegvert}:
	\begin{align*}
		&\left(\partial^{[2k-1]}\bigg(\ord{X(i',j')_{1}}{1}\ord{X(l,j')_{1}}{2}\bigg)\right) X(i,j)_{k}\\
		&=X(l,j')_{k}X(i,j)_{k}X(i',j')_{k+1}+X(i',j')_{k}X(i,j)_{k}X(l,j')_{k+1}+\cdots_{\sh}.
	\end{align*}		
	Since $X(i',j')_{k+1}\succ X(l,j')_{k+1} $,  reducing the $S$-polynomial further gives us:
	\begin{align}
		S(A,B)\xrightarrow{G_1}X(l,j)_{k}X(i,i)_{k}X(i',j')_{k+1}+\cdots.
		\label{eqn:S3k+1l>i'red}
	\end{align}
	We enlarge $G_2$ by including this reduction.
		
	\subsection{Weight \texorpdfstring{$3k+2$}{3k+2}}
	Here, we shall choose indices $i,j,i',j',l$ as in Figure \ref{fig:3k+2}:
	\begin{align}
		1\leq j'\leq l<j=i'<i\leq n.
		\label{eqn:wt3k+2indices}
	\end{align}
	We will show that corresponding to these indices,
	\begin{align}
	X(i',j')_{k}X(j,j)_{k+1}X(i,l)_{k+1}\in\LT(\ring{I}).
	\end{align}
	Such a leading term is indivisible by any of the quadratic leading terms of $G_1$.
	\subsubsection{Weight \texorpdfstring{$3k+2$}{3k+2} and \texorpdfstring{$j'=l$}{j'=l}}
	
	We begin with $l=j'$, and consider derivatives of elements of type \eqref{eqn:semidegvert} and \eqref{eqn:nondegtrinh}:
	\begin{align*}
		A&=\partial^{[2k-1]}
		(\ord{X(i',j')_{1}}{1}\ord{X(i,j')_{1}}{2})\\
		&=X(i,j')_{k}X(i',j')_{k+1}
		+X(i',j')_{k}X(i,j')_{k+1}+\cdots_{\sh}.
	\end{align*}
	and 
	\begin{align*}
		B&=\partial^{[2k-1]}
		\left( \ord{X(i,j')_{1}}{4}\sum_{j\leq s<i}\ord{X(s,s)_{1}}{\uparrow}
		+\ord{X(i',j')_{1}}{3}\ord{X(i,j)_{1}}{2} \right)\\
		&= X(i,j')_{k}\sum_{j\leq s<i}X(s,s)_{k+1}
		+X(i',j')_{k}X(i,j)_{k+1}\\
		&+X(i,j)_{k}X(i',j')_{k+1}
		+\sum_{j\leq s<i}X(s,s)_{k}X(i,j')_{k+1}+\cdots_{\sh}.
	\end{align*}
	So, we see that:
	\begin{align*}
		S(A,B) &= -X(i,j')_{k}\left(\sum_{j< s<i}X(s,s)_{k+1}\right)X(i',j')_{k+1}\\
		& -X(i',j')_{k}X(i,j)_{k+1}X(i',j')_{k+1}\\
		&-X(i,j)_{k}X(i',j')_{k+1}X(i',j')_{k+1}\\
		&+X(i',j')_{k}X(j,j)_{k+1}X(i,j')_{k+1}\\
		&-\left(\sum_{j\leq s<i}X(s,s)_{k}\right)X(i',j')_{k+1}X(i,j')_{k+1}+\cdots_{\sh}.
	\end{align*}
	Here, as before, the terms are written in descending order.

	Our aim is to show that this $S$-polynomial can be reduced so that the leading term of the reduction is the fourth term. 
	
	We reduce the first term by employing an element of the type \eqref{eqn:semidegvert}:
	\begin{align*}
		&\partial^{[2k-1]}\left( \ord{X(i',j')_{1}}{1}\ord{X(i,j')_{1}}{2} \right)\\
		&=X(i,j')_{k}X(i',j')_{k+1}+X(i',j')_{k}X(i,j')_{k+1}+\cdots_{\sh}
	\end{align*}
	multiplied by $\left(\sum_{j< s<i}X(s,s)_{k+1}\right)$. The leading monomial of the result is the same as that of the $S$-polynomial, but all other monomials are strictly lower than the fourth monomial in the $S$-polynomial. This eliminates the first term in the $S$-polynomial.
	
	We reduce the second term by using an element of the type obtained in \eqref{eqn:deg}:
	\begin{align*}
		\left(\partial^{[2k-1]}(X(i',j')_{1}^2)\right)X(i,j)_{k+1}
		&= 2X(i',j')_{k}X(i,j)_{k+1}X(i',j')_{k+1}+\cdots_{\sh},
	\end{align*}
	and reduce the third term using
	\begin{align*}
		\left(\partial^{[2k]}(X(i',j')_{1}^2)\right)\cdot X(i,j)_{k}
		&=X(i,j)_kX(i',j')_{k+1}X(i',j')_{k+1}+\cdots_{\sh}.
	\end{align*}

	In conclusion, we have achieved:
	\begin{align}
		S(A,B)\xrightarrow{G_1} X(i',j')_{k}X(j,j)_{k+1}X(i,j')_{k+1}+\cdots.
		\label{eqn:wt3k+2l=j'red}
	\end{align}
	We enlarge $G_2$ by including this reduction.
	
	\subsubsection{Weight \texorpdfstring{$3k+2$}{3k+2} and \texorpdfstring{$j'<l$}{j'<l}}
	
	Here, our broad strategy is still the same, except that the analogue of the $A$ element is slightly more complicated.
	The restrictions \eqref{eqn:wt3k+2indices} are still in force.
	
	We start with:
	\begin{align*}
		A&=\partial^{[2k-1]}
		(\ord{X(i,j')_{1}}{4}\ord{X(i',l)_{1}}{1}+\ord{X(i',j')_{1}}{3}\ord{X(i,l)_{1}}{2})\\
		&=
		X(i,j')_{k}X(i',l)_{k+1}+X(i',j')_{k}X(i,l)_{k+1}\\
		&\quad+X(i,l)_{k}X(i',j')_{k+1}+X(i',l)_{k}X(i,j')_{k+1}+\cdots_{\sh}
	\end{align*}
	and the same element $B$ as before:
		\begin{align*}
		B&=\partial^{[2k-1]}
		\left( \ord{X(i,j')_{1}}{4}\sum_{j\leq s<i}\ord{X(s,s)_{1}}{\uparrow}
		+\ord{X(i',j')_{1}}{3}\ord{X(i,j)_{1}}{2} \right)\\
		&= X(i,j')_{k}\sum_{j\leq s<i}X(s,s)_{k+1}
		+X(i',j')_{k}X(i,j)_{k+1}\\
		&\quad +X(i,j)_{k}X(i',j')_{k+1}
		+\sum_{j\leq s<i}X(s,s)_{k}X(i,j')_{k+1}+\cdots_{\sh}.
	\end{align*}
	We see that:
	\begin{align*}
		S(A,B)
		&=-X(i,j')_{k}\left(\sum_{j< s<i}X(s,s)_{k+1}\right)X(i',l)_{k+1}\\
		&-X(i',j')_{k}X(i,j)_{k+1}X(i',l)_{k+1}\\
		&+X(i',j')_{k}X(j,j)_{k+1}X(i,l)_{k+1}\\
		&+X(i,l)_{k}X(j,j)_{k+1}X(i',j')_{k+1}\\
		&-X(i,j)_{k}X(i',l)_{k+1}X(i',j')_{k+1}\\
		&+X(i',l)_{k}X(j,j)_{k+1}X(i,j')_{k+1}\\
		&-\left(\sum_{j\leq s<i}X(s,s)_{k}\right)X(i',l)_{k+1}X(i,j')_{k+1}	+\cdots_{\sh}.
	\end{align*}
	Here, the terms are written in the descending order.
	Our aim is to reduce this so that the leading term is the third term.
	
	In order to reduce the first term, we utilize an element of $G_1$ of the type obtained in \eqref{eqn:nondegnohe}
	\begin{align*}
		&\partial^{[2k-1]}
		(\ord{X(i,j')_{1}}{4}\ord{X(i',l)_{1}}{1}+\ord{X(i',j')_{1}}{3}\ord{X(i,l)_{1}}{2})\\
		&=
		X(i,j')_{k}X(i',l)_{k+1}+X(i',j')_{k}X(i,l)_{k+1}\\
		&\quad +X(i,l)_{k}X(i',j')_{k+1}+X(i',l)_{k}X(i,j')_{k+1}+\cdots_{\sh}
	\end{align*}
	multiplied with $\sum_{j< s<i}X(s,s)_{k+1}$. The leading monomial of the result is the same as that of the $S$-polynomial and the remaining terms  are all lower than the third term in the $S$-polynomial.
	
	For the second term, we utilize an element of $G_1$ of the type obtained in \eqref{eqn:semidegvert}
	\begin{align*}
		&\partial^{[2k-1]}(\ord{X(i',j')_{1}}{2}\ord{X(i',l)_{1}}{1})\\
		&=X(i',j')_{k}X(i',l)_{k+1}+X(i',l)_{k}X(i',j')_{k+1}+\cdots_{\sh}
	\end{align*}
	multiplied with $X(i,j)_{k+1}$. The leading monomial of the resulting  element is the same the second monomial in the $S$-polynomial, but the rest of the terms are lower than the third term in the $S$-polynomial.
	
	Thus, we have reduced the $S$-polynomial to:
	\begin{align}
		S(A,B)\xrightarrow{G_1}
		X(i',j')_{k}X(j,j)_{k+1}X(i,l)_{k+1}	+\cdots
		\label{eqn:wt3k+2l>j'red}.
	\end{align}
	Finally, we enlarge $G_2$ by including this reduction.
	
	This completes the construction of $G_2$. 
	
	\subsection{Summary of cubic leading monomials of \texorpdfstring{$G_2$}{G2}}
	\label{sec:cubicsummary}
	We have now obtained as leading monomials the cubics  corresponding to the factors in Figures \ref{fig:3k+1} and \ref{fig:3k+2}.

	\section{A family of combinatorial identities due to Dousse--Konan}
	\label{sec:dk}
	
	Our aim in this section is to recall the combinatorial identities of Dousse--Konan  \cite{DouKon-slnI} (and \cite{DouKon-slnII}) and connect them with the leading monomials which we have obtained thus far. The setup used by \cite{DouKon-slnI} 
	relies on ``difference conditions'' and 
	it is quite different from what is natural from a Gr\"obner perspective. Therefore, our main task in this section is to bridge the two formalisms.
	
	\begin{defi} Let $\sC$ be a finite set of colours, and let $\ZZ_{>0}\times\sC$ be the set of coloured positive integers. 
	Let $\Delta:\sC\times \sC\rightarrow \ZZ$ be a function (called the minimum difference function). 
	
	A coloured partition is a sequence $\pi = ( \pi_1 ,\pi_2,\cdots, \pi_s)$ sometimes alternately denoted as $\pi=\pi_1+\pi_2+\cdots+\pi_s$ such that  $s\geq 0$ ($s=0$ corresponds to the empty coloured partition denoted by $\emptyset$), each $\pi_i=({k_i})_{c_i}\in  \ZZ_{>0}\times\sC$, and finally, 
	for all $1\leq i\leq s-1$, $k_i-k_{i+1}\geq \Delta(c_i,c_{i+1})$.
	
	We denote the set of such coloured partitions by $\sP(\sC,\Delta)$.
	\end{defi}
	
	Note that generally, $\Delta\geq 0$, in which case, the parts will have weakly decreasing weights. This orders the parts exactly opposite to the order used in Definition \ref{def:ourgrevlex}. However, this is merely a difference in the convention.
	
	Note also that at the beginning, we were using the notation $X_k$  where $X\in\sX_n$ was the colour and $k\in\ZZ_{>0}$ was the weight. For this section, we have switched the notation to be consistent with \cite{DouKon-slnI}. 
	
	\begin{nota}
		A contiguous subsequence $\psi$ of $\pi$ will be denoted as $\psi \subsetdots \pi$.
	\end{nota}
	We now define coloured partitions with forbidden contiguous subsequences.
	
	\begin{defi}
		Let $\sC$, $\Delta$, $\sP(\sC,\Delta)$ be as above.
		Let $\Phi\subset \sP(\sC,\Delta)$ be a subset called the forbidden patterns/partitions. We say that $\pi\in \sP(\sC,\Delta)$ avoids (or forbids) $\Phi$, if $\psi\subsetdots\pi$ implies that  $\psi\not\in\Phi$. 
		Correspondingly, we let $\sP(\sC,\Delta,\Phi)$ be the set of those partitions in $\sP(\sC,\Delta)$ which avoid $\Phi$.
	\end{defi}
	
	Note: if $\Phi=\{\}$ is empty, then there are no forbidden contiguous subsequences and consequently $\sP(\sC,\Delta,\Phi)=\sP(\sC,\Delta)$.		

	\begin{defi}
	Corresponding to $\la{sl}_n$, the set of colours used by \cite{DouKon-slnI} is:
	\begin{align}
		\sC_{n} = \{a_ib_j \,|\, 0\leq i,j\leq n-1, (i,j)\neq (0,0)\}.
		\label{eqn:DKcols}
	\end{align}
	We will think of colours $a_ib_i$ ($1\leq i\leq n-1$) as belonging to $\la{h}$.
	\end{defi}
	
	We have been using a slightly different set of colours to align better with matrices in $\la{sl}_n$, namely:
	\begin{align*}
		\sX_n=\{X(i,j) \,|\, 1\leq i,j\leq n, (i,j)\neq (n,n)\}.
		\label{eqn:Xcols}
	\end{align*}
	The conversion is as follows:
	\begin{align}
		X(i,j) = \begin{cases}
			a_{j-1}b_{i-1}&\mathrm{if}\quad i\neq j\\
			a_jb_i&\mathrm{if}\quad i=j.
		\end{cases}
	\end{align}

	It will be beneficial for us to organize the colours $a_ib_j$ accordingly:
	\begin{enumerate}
		\item If $i\neq j$, place colour $a_ib_j$ in column $i+1$ and row $j+1$.
		\item Place colour $a_ib_i$ in row $i$ and column $i$.
	\end{enumerate}
	this is consistent with our final renaming of colours using $X$ variables.

	\begin{exam}
	The arrangement is as follows for $n=4$, i.e., $\la{sl}_4$. Note carefully the locations of $a_ib_i$!
	\begin{align*}
		\begin{matrix}
			X(1,1) & X(1,2) &X(1,3) &X(1,4)\\
			X(2,1) & X(2,2) &X(2,3) &X(2,4)\\
			X(3,1) & X(3,2) &X(3,3) &X(3,4)\\
			X(4,1) & X(4,2) &X(4,3) &
		\end{matrix}
		\quad\quad 
		\begin{matrix}
			a_1b_1 & a_1b_0 &a_2b_0 &a_3b_0\\
			a_0b_1 & a_2b_2 & a_2b_1 &a_3b_1\\
			a_0b_2 & a_1b_2 & a_3b_3 &a_3b_2\\
			a_0b_3 & a_1b_3 & a_2b_3 & 
		\end{matrix}
	\end{align*}
	\end{exam}
	
	\begin{defi}
	On $\sC_{n}$, define (see \cite[Def.\ 1.9]{DouKon-slnI}):
	\begin{align}
		\Delta(a_jb_i, a_{j'}b_{i'})=
		\chi(j\geq j')-\chi(j=i=j')+\chi(i\leq i')-\chi(i=j'=i'),
		\label{eqn:DKDelta}
	\end{align}
	and then define (cf. \cite[Eqn.\ (1.9)]{DouKon-slnI}):
	\begin{align}
		\label{eqn:DKDelta1}
		&\Delta_1(a_jb_i,a_{j'}b_{i'})=
		\Delta(a_jb_i,a_{j'}b_{i'}) \notag \\
		&\qquad+\chi(j=i=j'=i')+
		\chi(j=i=i'+1\leq j')+
		\chi(j'=i'=j+1\leq i).
	\end{align}
	\end{defi}
	
	The minimal difference function $\Delta_1$ has the following crucial properties.
	
	\begin{lem}
		\label{lem:delta1props}
		The function $\Delta_1$ satisfies the following properties:
		\begin{enumerate}
			\item 
			\label{item:Delta1summands}
			In the definition \eqref{eqn:DKDelta1} of $\Delta_1$, at most one summand is non-zero.

			\item 
			\label{item:Delta1-012}
			$\Delta_1\in \{0,1,2\}$.

			\item
			\label{item:Delta1=0}
			For $x,y$ in $\sC_{n}$, $\Delta_1(x,y)=0$ iff $x$ lies strictly to the bottom-left of $y$ (in particular, $x\neq y$ and also $x$ can not be in the same horizontal or vertical line as $y$). 
			We shall denote this as:
			\begin{align*}
				\begin{tikzpicture}[scale=0.65]
					\node (x) at (0,0) {};
					\node (y) at (1,1) {};
					\draw[dashed] (-0.75,1) -- (y.west);
					\draw[dashed] (1,-0.75) -- (y.south);
					\node[draw,circle, inner sep=0.5ex, fill=white, minimum width=3ex] at (x)  {$\scriptstyle{x}$};
					\node[draw,circle, inner sep=0.5ex, fill=white, minimum width=3ex] at (y) {$\scriptstyle{y}$};
				\end{tikzpicture}
			\end{align*}
			
			\item 
			\label{item:Delta1xx!=0}
			For all $x\in\sC_n$, $\Delta_1(x,x)\neq 0$.

				\item \label{item:Delta1xyyx>0}
			For $x,y\in\sC_n$, we have $\Delta_1(x,y)\Delta_1(y,x)\geq 1$ iff either $x$ is to the top-left of $y$ or $y$ is to the top-left of $x$, depicted as:
			\begin{align*}
				\begin{matrix}
					\begin{tikzpicture}
						\draw (0,0) -- (1,0) -- (1,1);
						\node[draw,circle, inner sep=0.5ex, fill=white, minimum width=3ex] at (1,0)  {$\scriptstyle{y}$}; 
						\node[draw,circle, inner sep=0.5ex, fill=white, minimum width=3ex] at (0,1)  {$\scriptstyle{x}$}; 
					\end{tikzpicture}
				\end{matrix}
				\qquad\mathrm{or}\qquad
				\begin{matrix}
					\begin{tikzpicture}
						\draw (0,0) -- (1,0) -- (1,1);
						\node[draw,circle, inner sep=0.5ex, fill=white, minimum width=3ex] at (1,0)  {$\scriptstyle{x}$}; 
						\node[draw,circle, inner sep=0.5ex, fill=white, minimum width=3ex] at (0,1)  {$\scriptstyle{y}$}; 
					\end{tikzpicture}
				\end{matrix}
			\end{align*}

			\item
			\label{item:Delta1=2}
			For $x,y$ in $\sC_{n}$, $\Delta_1(x,y)=2$ iff $x$ and $y$ satisfy one of the following.
			\begin{enumerate}
				\item Neither $x$ nor $y$ are on $\la{h}$ and $x$ is to the top-right of $y$ (including $x=y$, $x$ directly above in the same column as $y$ and directly to the right in the same row as $y$)
				\item If $x$ is in $\la{h}$ and $y$ is not in $\la{h}$, then $x$ is to the top-right of $y$ and not in the same row as  $y$.
				\item If $x$ is not in $\la{h}$ but $y$ is in $\la{h}$, then $x$ is to the top-right of $y$ and is not  in the same column as $y$.
			\end{enumerate}
			We shall denote these as respectively:
			\begin{align*}
					\begin{matrix}
					\begin{tikzpicture}[scale=0.65]
						\draw[white] (-2,2)--(1.5,-1.5);
						\node (x) at (0,0) {};
						\node (y) at (1,1) {};
						\draw (-0.75,1) -- (y.west);
						\draw (1,-0.75) -- (y.south);
						\node[draw,circle, inner sep=0.5ex, fill=white, minimum width=1ex] at (x)  {$\scriptstyle{y}$};
						\node[draw,circle, inner sep=0.5ex, fill=white, minimum width=1ex] at (y) {$\scriptstyle{x}$};
					\end{tikzpicture}
					\end{matrix}
					\begin{matrix}
					\begin{tikzpicture}[scale=0.65]
						\draw[white] (-2,2)--(1.5,-1.5);
						\draw[dotted] (0,2) -- (2,0);
						\node at (2.15,-0.15) {$\scriptstyle{\la{h}}$};
						\node (x) at (0,0) {};
						\node (y) at (1,1) {};
						\draw[dashed] (-0.75,1) -- (y.west);
						\draw (1,-0.75) -- (y.south);
						\node[draw,circle, inner sep=0.5ex, fill=white, minimum width=1ex] at (x)  {$\scriptstyle{y}$};
						\node[draw,circle, inner sep=0.5ex, fill=white, minimum width=1ex] at (y) {$\scriptstyle{x}$};
					\end{tikzpicture}
					\end{matrix}
					\begin{matrix}
					\begin{tikzpicture}[scale=0.65]
						\draw[white] (-2,2)--(1.5,-1.5);
						\draw[dotted] (-1.25,1.25) -- (1.25,-1.25);
						\node at (1.35,-1.35) {$\scriptstyle{\la{h}}$};
						\node (x) at (0,0) {};
						\node (y) at (1,1) {};
						\draw (-0.75,1) -- (y.west);
						\draw[dashed] (1,-0.75) -- (y.south);
						\node[draw,circle, inner sep=0.5ex, fill=white, minimum width=3ex] at (x)  {$\scriptstyle{y}$};
						\node[draw,circle, inner sep=0.5ex, fill=white, minimum width=3ex] at (y) {$\scriptstyle{x}$};
					\end{tikzpicture}
					\end{matrix}
			\end{align*}

			\item 
			\label{item:Delta1xy=0yx=2}
			For $x,y\in\sC_n$, if $\Delta_1(x,y)=0$ then $\Delta_1(y,x)=2$.

			\item
			\label{item:Delta1tri}
			For all $x,y,z\in\sC_{n}$, $\Delta_1$ satisfies the triangle inequality (cf. \cite[Property 1.13]{DouKon-slnI} for $\Delta$):
			\begin{align}
				\Delta_1(x,y)+\Delta_1(y,z)\geq \Delta_1(x,z).
				\label{eqn:Delta1tri}
			\end{align}
		\end{enumerate}
	\end{lem}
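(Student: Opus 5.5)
The plan is a direct case analysis on the defining formulas \eqref{eqn:DKDelta} and \eqref{eqn:DKDelta1}, after translating them into the matrix picture. Write $x=a_jb_i$ and $y=a_{j'}b_{i'}$. For $i\neq j$, the colour $a_jb_i$ sits in row $i+1$, column $j+1$; for the diagonal colour $a_ib_i$ it sits at $(i,i)$.

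The first step is to rewrite each indicator in terms of positions. Roughly, $\chi(j\geq j')$ says that $x$ is weakly right of $y$, and $\chi(i\leq i')$ says that $x$ is weakly above $y$. The correction terms $-\chi(j=i=j')$ and $-\chi(i=j'=i')$ handle the shifted placement of the $\la{h}$ colours. I will carry out this translation separately in four cases: $x,y\notin\la{h}$; $x\in\la{h}$, $y\notin\la{h}$; $x\notin\la{h}$, $y\in\la{h}$; and both in $\la{h}$. In each case I will record $\Delta$ as (weakly right) $+$ (weakly above), with the boundary equalities shifted by one where $\la{h}$ is involved.

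\begin{itemize}
\item \emph{Case table.} Once the four-case table is written, properties \eqref{item:Delta1summands} and \eqref{item:Delta1-012} follow by inspection. For \eqref{item:Delta1summands}, the extra summands $\chi(j=i=j'=i')$, $\chi(j=i=i'+1\leq j')$ and $\chi(j'=i'=j+1\leq i)$ require $x\in\la{h}$ or $y\in\la{h}$ with incompatible index equalities. The two negative summands of $\Delta$ likewise cannot both fire unless $x=y\in\la{h}$, where the first positive correction compensates. For \eqref{item:Delta1-012}, the nonnegative part is at most $2$, and each correction fires only in configurations where $\Delta\le 1$.
\item \emph{Geometric properties.} Property \eqref{item:Delta1=0} (strict bottom-left) is read off from the table. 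Property \eqref{item:Delta1xx!=0} is the diagonal evaluation, with the $+\chi(j=i=j'=i')$ term giving $1$ on $\la{h}$. Property \eqref{item:Delta1=2} is the locus where both indicators are $1$ and no correction fires. Property \eqref{item:Delta1xyyx>0} follows because $\Delta_1(x,y)\Delta_1(y,x)=0$ exactly when one of $x,y$ is strictly bottom-left of the other. Property \eqref{item:Delta1xy=0yx=2} then follows from \eqref{item:Delta1=0} and \eqref{item:Delta1=2}: strict bottom-left for $(x,y)$ means strict top-right for $(y,x)$, in a different row and column, so all three subcases of \eqref{item:Delta1=2} are met.
\end{itemize}

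The main obstacle is the triangle inequality \eqref{item:Delta1tri}. For $\Delta$ itself this is \cite[Property 1.13]{DouKon-slnI}, so only the three added indicators need care. The inequality is trivial when $\Delta_1(x,z)=0$. When $\Delta_1(x,z)\ge1$, it suffices to treat the case where one of $\Delta_1(x,y),\Delta_1(y,z)$ is $0$, since otherwise the left side is already at least $2$.

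\begin{itemize}
\item \emph{Case $\Delta_1(x,z)=1$.} Say $\Delta_1(x,y)=0$, so $x$ is strictly bottom-left of $y$. I will show $\Delta_1(y,z)\ge1$: if $\Delta_1(y,z)=0$, then $y$ is strictly bottom-left of $z$, so by transitivity $x$ is strictly bottom-left of $z$, giving $\Delta_1(x,z)=0$, a contradiction.
\item \emph{Case $\Delta_1(x,z)=2$.} Here I need the other term to be $2$. Say $\Delta_1(x,y)=0$; then $y$ lies strictly top-right of $x$. Using the description in \eqref{item:Delta1=2}, $x$ is weakly top-right of $z$ with the stated row and column exclusions. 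Composing these, $y$ is top-right of $z$ and strictly avoids $z$'s row and column, which by \eqref{item:Delta1=2} gives $\Delta_1(y,z)=2$. The symmetric case $\Delta_1(y,z)=0$ is handled the same way.
\end{itemize}

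The hardest part will be the bookkeeping when some of $x,y,z$ lie on $\la{h}$, where the same-row and same-column exclusions in \eqref{item:Delta1=2} must be checked by hand. I expect a finite check of these boundary configurations to complete the argument.
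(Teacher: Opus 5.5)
Your proposal is correct and follows essentially the same route as the paper: a four-case analysis according to whether $x,y$ lie on $\la{h}$ yields the geometric descriptions of $\Delta_1=0$ (strictly bottom-left) and $\Delta_1=2$, from which (4), (5) and (7) are read off, and the triangle inequality is reduced to the cases where one of $\Delta_1(x,y),\Delta_1(y,z)$ vanishes and then settled by composing ``strictly bottom-left'' with ``weakly top-right'' exactly as you describe. Two points are worth stating explicitly in your table: each correction term fires only where $\Delta=0$, not merely $\Delta\le 1$, and this is what (1) asserts. Also, when both colours lie on $\la{h}$ one has $\Delta_1\equiv 1$, which is why that case is absent from (6) and cannot obstruct (7) or the $\Delta_1(x,z)=2$ step.
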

	\begin{proof}
		Property \eqref{item:Delta1summands} can be checked directly by assuming one-by-one that one of the three $\chi$ functions in the  \eqref{eqn:DKDelta1} is non-zero. 
		
	    Property \eqref{item:Delta1-012} now follows since the same is true for $\Delta$ by \cite[Property 1.10]{DouKon-slnI} (this is straight-forward to check), and $\Delta_1$ only modifies certain $0$ values of $\Delta$ to be $1$. 
	    
		For \eqref{item:Delta1=0}, it is beneficial to create four cases, based on whether $x\in\la{h}$ and $y\in\la{h}$. We show one of the cases, namely, when $x\in\la{h}$ but $y\not\in\la{h}$. The rest of the cases are similar.		
		Let $x=a_ib_i$, $y=a_{j'}b_{i'}$ $(j'\neq i')$. Note that $x$ will be placed in row $i$ and column $i$, but $y$ will be placed in row $i'+1$ and column $j'+1$.
		Here, $\Delta_1(x,y)$ evaluates to:
		\begin{align*}
			\Delta_1(x,y)
			&=\chi(i> j')+ \chi(i\leq i') +\chi(i=i'+1\leq j').
		\end{align*}
		Therefore, 
		\begin{align*}
			\Delta_1&(x,y)=0 \iff (i'<i\leq j') \wedge (i\neq i'+1 \vee i'\geq j')\\
			&\iff (i'+1<i\leq j')  \vee (j' \leq i'<i\leq j')
			\iff (i'+1<i< j'+1).
		\end{align*}
		Writing in terms of rows and columns, the last statement is true iff $x$  is strictly to the bottom-left of $y$.
		
		Property \eqref{item:Delta1xx!=0} now follows immediately.
		
		For Property \eqref{item:Delta1xyyx>0}, to have $\Delta_1(x,y)\Delta_1(y,x)\geq 1$, neither can $x$  be strictly to the bottom-left of $y$, nor can $y$  be strictly to the bottom-left of $x$. This leaves us with the two options presented.

		The analysis for \eqref{item:Delta1=2} is very similar to that of Property \eqref{item:Delta1=0}, but we start by noting that  $\Delta_1(x,y)=2$ iff $\Delta(x,y)=2$. We omit the details.

		Property \eqref{item:Delta1xy=0yx=2} is an easy consequence of properties \eqref{item:Delta1=0} and \eqref{item:Delta1=2}.
		
		For \eqref{item:Delta1tri}, it is enough show that if the right-hand side is $1$, then the left hand side is non-zero, and if the right-hand side is $2$, then the left-hand side avoids being $0+0$, $1+0$ or $0+1$.
		
		Suppose that $\Delta_1(x,z)>0$ but $\Delta_1(x,y)=0$. This means that $x$ is \emph{not} strictly to the bottom-left of $z$ but $x$ is strictly to the bottom left of $y$. This means the situation is as follows, where the lines emanating from $y$ demarcate the position of $x$ relative to $y$, and the lines emanating from $x$ demarcate the position of $z$ with respect to $x$:
		\begin{align*}
			\begin{tikzpicture}[scale=0.65]
				\node (x) at (0,0) {};
				\node (y) at (1.5,1.5) {};
				\draw[dashed] (0.2,1.5) -- (y.west);
				\draw[dashed] (1.5,0.2) -- (y.south);
				\draw (x.north) -- (0,1.7);
				\draw (x.east) -- (1.7,0);
				\node[draw,circle, inner sep=0.5ex, fill=white, minimum width=3ex] at (x)  {$\scriptstyle{x}$};
				\node[draw,circle, inner sep=0.5ex, fill=white, minimum width=3ex] at (y) {$\scriptstyle{y}$};
				\node[draw, circle, inner sep=0.5ex, fill=white] at (-0.8,-0.7) {$\scriptstyle{z}$};
			\end{tikzpicture}
		\end{align*}
		This implies that $\Delta_1(y,z)\neq 0$ (note that $z$ could be to the top-left of $y$ or to the bottom-right of $y$, so it may happen that $\Delta_1(z,y)>0$). This proves the triangle inequality when $\Delta_1(x,z)=1$.
		
		Now suppose $\Delta_1(x,z)=2$.
		First, suppose $\Delta_1(x,y)=0$. Then, the situation is one of the following: 
		\begin{align*}
			\begin{matrix}
				\begin{tikzpicture}[scale=0.65]
					\draw[white] (-2,2)--(2,-1.5);
					\node (x) at (0,0) {};
					\node (y) at (1,1) {};
					\draw (-0.75,1) -- (y.west);
					\draw (1,-0.75) -- (y.south);
					\node[draw,circle, inner sep=0.5ex, fill=white, minimum width=3ex] at (x)  {$\scriptstyle{z}$};
					\node[draw,circle, inner sep=0.5ex, fill=white, minimum width=3ex] at (y) {$\scriptstyle{x}$};
					\draw[dashed] (0.25,1.75) -- (1.75,1.75)--(1.75,0.25);
					\node[draw,circle, inner sep=0.5ex, fill=white, minimum width=3ex] at (1.75,1.75) {$\scriptstyle{y}$};
				\end{tikzpicture}
			\end{matrix}
			\begin{matrix}
				\begin{tikzpicture}[scale=0.65]
					\draw[white] (-2,2)--(2,-1.5);
					\draw[dotted] (0,2) -- (2,0);
					\node at (2.15,-0.15) {$\scriptstyle{\la{h}}$};
					\node (x) at (0,0) {};
					\node (y) at (1,1) {};
					\draw[dashed] (-0.75,1) -- (y.west);
					\draw (1,-0.75) -- (y.south);
					\node[draw,circle, inner sep=0.5ex, fill=white, minimum width=3ex] at (x)  {$\scriptstyle{z}$};
					\node[draw,circle, inner sep=0.5ex, fill=white, minimum width=3ex] at (y) {$\scriptstyle{x}$};
					\draw[dashed] (0.75,1.75) -- (1.75,1.75)--(1.75,0.75);
					\node[draw,circle, inner sep=0.5ex, fill=white, minimum width=3ex] at (1.75,1.75) {$\scriptstyle{y}$};
				\end{tikzpicture}
			\end{matrix}
			\begin{matrix}
				\begin{tikzpicture}[scale=0.65]
					\draw[white] (-2,2)--(2,-1.5);
					\draw[dotted] (-1.25,1.25) -- (1.25,-1.25);
					\node at (1.35,-1.35) {$\scriptstyle{\la{h}}$};
					\node (x) at (0,0) {};
					\node (y) at (1,1) {};
					\draw (-0.75,1) -- (y.west);
					\draw[dashed] (1,-0.75) -- (y.south);
					\node[draw,circle, inner sep=0.5ex, fill=white, minimum width=3ex] at (x)  {$\scriptstyle{z}$};
					\node[draw,circle, inner sep=0.5ex, fill=white, minimum width=3ex] at (y) {$\scriptstyle{x}$};
					\draw[dashed] (0.25,1.75) -- (1.75,1.75)--(1.75,0.25);
					\node[draw,circle, inner sep=0.5ex, fill=white, minimum width=3ex] at (1.75,1.75) {$\scriptstyle{y}$};
				\end{tikzpicture}
			\end{matrix}
		\end{align*}
		and we see that in this case $\Delta_1(y,z)=2$.
		Similarly, if $\Delta_1(y,z)=0$, then the situation becomes:
		\begin{align*}
			\begin{matrix}
				\begin{tikzpicture}[scale=0.65]
					\draw[white] (-2,2)--(2,-2);
					\node (x) at (0,0) {};
					\node (y) at (1,1) {};
					\draw (-0.75,1) -- (y.west);
					\draw (1,-0.75) -- (y.south);
					\node[draw,circle, inner sep=0.5ex, fill=white, minimum width=3ex] at (x)  {$\scriptstyle{z}$};
					\node[draw,circle, inner sep=0.5ex, fill=white, minimum width=3ex] at (y) {$\scriptstyle{x}$};
					\draw[dashed] (0.2,-1) -- (-1,-1)--(-1,0.2);
					\node[draw,circle, inner sep=0.5ex, fill=white, minimum width=3ex] at (-1,-1) {$\scriptstyle{y}$};
				\end{tikzpicture}
			\end{matrix}
			\begin{matrix}
				\begin{tikzpicture}[scale=0.65]
					\draw[white] (-2,2)--(2,-2);
					\draw[dotted] (0,2) -- (2,0);
					\node at (2.15,-0.15) {$\scriptstyle{\la{h}}$};
					\node (x) at (0,0) {};
					\node (y) at (1,1) {};
					\draw[dashed] (-0.75,1) -- (y.west);
					\draw (1,-0.75) -- (y.south);
					\node[draw,circle, inner sep=0.5ex, fill=white, minimum width=3ex] at (x)  {$\scriptstyle{z}$};
					\node[draw,circle, inner sep=0.5ex, fill=white, minimum width=3ex] at (y) {$\scriptstyle{x}$};
					\draw[dashed] (0.2,-1) -- (-1,-1)--(-1,0.2);
					\node[draw,circle, inner sep=0.5ex, fill=white, minimum width=3ex] at (-1,-1) {$\scriptstyle{y}$};
				\end{tikzpicture}
			\end{matrix}
			\begin{matrix}
				\begin{tikzpicture}[scale=0.65]
					\draw[white] (-2,2)--(2,-2);
					\draw[dotted] (-1.25,1.25) -- (1.25,-1.25);
					\node at (1.35,-1.35) {$\scriptstyle{\la{h}}$};
					\node (x) at (0,0) {};
					\node (y) at (1,1) {};
					\draw (-0.75,1) -- (y.west);
					\draw[dashed] (1,-0.75) -- (y.south);
					\node[draw,circle, inner sep=0.5ex, fill=white, minimum width=3ex] at (x)  {$\scriptstyle{z}$};
					\node[draw,circle, inner sep=0.5ex, fill=white, minimum width=3ex] at (y) {$\scriptstyle{x}$};
					\draw[dashed] (0.2,-1) -- (-1,-1)--(-1,0.2);
					\node[draw,circle, inner sep=0.5ex, fill=white, minimum width=3ex] at (-1,-1) {$\scriptstyle{y}$};
				\end{tikzpicture}
			\end{matrix}
		\end{align*}
		which again implies that $\Delta_1(x,y)=2$. 
	\end{proof}

	It is now time to define the forbidden patterns to be imposed on  $\sP(\sC_n,\Delta_1)$.
	
	\begin{defi}
		Let $\Phi$ be the set of sequences:
		\begin{align*}
			\Phi&=
			\{ (k+1)_{a_{j'}b_{i'}}+k_{a_{i+1}b_{i+1}}+k_{a_{j}b_{l}}\,|\,
			0\leq i'\leq l<i=j'<j\leq n-1\}\\
			&\quad\cup
			\{ (k+1)_{a_{l}b_{i}}+(k+1)_{a_{j+1}b_{j+1}}+k_{a_{j'}b_{i'}}\,|\,
			0\leq j'\leq l<j=i'<i\leq n-1\}.
		\end{align*}
		Note that $\Phi=\{\}$ if $n=2$.
	\end{defi}
	
	Recall again that $a_{i}b_{i}$ is placed in row $i$ and column $i$, but $a_{j}b_i$ is placed in row $i+1$ and column $j+1$. 
	Therefore, based on Properties \eqref{item:Delta1=0} and \eqref{item:Delta1=2} from Lemma \ref{lem:delta1props}, it is not hard to see that $\Phi\subset\sP(\sC_n,\Delta_1)$. Thus, it gives rise to a  class of coloured partitions $\sP(\sC_n,\Delta_1,\Phi)$.	
	
	\begin{rem}
		\label{rem:Phicubic}
		Converting to $\sX_n$ colours, we see that these cubic partitions correspond \emph{exactly} to the cubic terms in Figure \ref{fig:cubic}.
	\end{rem}
	
	The following theorem is due to Dousse--Konan.
	
	\begin{thm}[{\cite{DouKon-slnI}}]
		\label{thm:dk}
		Let $F_n(q)$ be the generating function of $\sP(\sC_n,\Delta_1,\Phi)$ where $q$ keeps track of the total weight of a coloured partition. Then,
		\begin{align}
			F_n(q)=\chi_{\VOA{L}_1(\la{sl}_n)}(q).
			\label{eqn:dkchar}
		\end{align}
	\end{thm}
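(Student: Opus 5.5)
This is a theorem cited from Dousse--Konan, so the proof here consists of assembling their results and checking that our data matches theirs exactly.

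The plan has three steps.

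\textbf{Step 1: the product formula.} In \cite{DouKon-slnI}, Dousse--Konan start from the KMN$^2$ path realization of level $1$ crystals of $\ala{sl}_n$. For $\VOA{L}_1(\la{sl}_n)$, i.e.\ the module $L(\Lambda_0)$, this gives a character formula for the principal (or a suitably specialized) character. Through a bijection with grounded partitions they express the relevant generating function as a product. That product is the character of $L(\Lambda_0)$ with all colour variables specialized to $1$, so only the weight $q$ is kept.

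\textbf{Step 2: the combinatorial side.} The grounded partitions are built from the minimal difference function $\Delta$ of \eqref{eqn:DKDelta}. They carry a fixed ground colour and end with a prescribed tail. The main combinatorial result of \cite{DouKon-slnI}, refined in \cite{DouKon-slnII}, removes the ground and the tail. Stated in our language: the grounded partitions with $\Delta$ correspond, weight-preservingly, to ordinary coloured partitions in $\sP(\sC_n,\Delta_1)$ that avoid the patterns $\Phi$.

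The modification from $\Delta$ to $\Delta_1$ in \eqref{eqn:DKDelta1} and the forbidden cubic patterns $\Phi$ are exactly what absorb the ground and the tail. The properties in Lemma \ref{lem:delta1props}, in particular the triangle inequality \eqref{eqn:Delta1tri}, are the hypotheses under which this removal works.

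\textbf{Step 3: matching conventions.} I would verify the following.
\begin{enumerate}
\item $\Delta_1$ agrees with \cite[Eqn.\ (1.9)]{DouKon-slnI}.
\item $\Phi$ agrees with their list of forbidden patterns after their indices are shifted to our $0\leq\cdot\leq n-1$ conventions.
\item Their generating function, with colour variables set to $1$ and the normalization of the vacuum weight accounted for, is $\sum_{m}\dim \VOA{L}_1(\la{sl}_n)_m q^m$.
\end{enumerate}
For the last point I would use the standard identification of the level $1$ character with a lattice theta function over $(q)_\infty^{n-1}$.

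\textbf{Main obstacle.} I expect the hardest part to be this matching of conventions, especially the specialization in Step 3. Dousse--Konan's formula is naturally stated in the principal or evaluated gradation. Showing that it yields the conformal-weight gradation requires tracking how $\delta$ and the colour weights specialize, and confirming that no power of $q$ is lost from the ground state.
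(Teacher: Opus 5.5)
\textbf{Gap.} Your plan fails at Step 3(2). The pattern set $\Phi$ is not a list that appears in \cite{DouKon-slnI} up to an index shift, so there is nothing to match it against.

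The Dousse--Konan result the paper uses (Theorem 1.30 of \cite{DouKon-slnI}, Theorem 1.10 of \cite{DouKon-slnII}) concerns a whole family of subsets of $\sP(\sC_n,\Delta)$; note $\Delta$, not $\Delta_1$. Each subset is cut out, via their Definition 1.19, by a pair of functions $(\delta,\gamma)$. Every pair satisfying their admissibility conditions (Definitions 1.17 and 1.18) gives a class whose generating function is $\chi_{\VOA{L}_1(\la{sl}_n)}(q)$. The closest explicit instance is the pair $(\delta_1,\gamma_1)$ behind their Proposition 1.20. It yields a different class, not $\sP(\sC_n,\Delta_1,\Phi)$.

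\textbf{Missing idea.} You need to design a new $\gamma$:
\begin{itemize}
\item Keep $\delta(a_kb_l)=1+\min(k,l)$.
\item Keep $\gamma(a_{k_1}b_{l_1},a_{k_2}b_{l_2})=1+\max(k_1,l_2)$ when $\max(k_1,l_2)<\min(k_2,l_1)$.
\item In the two remaining cases, replace the fallback values $k_2$ and $l_1$ used by $\gamma_1$ with $k_1+1$ and $l_2+1$ respectively.
\end{itemize}
You must then check two things. First, this pair satisfies Definitions 1.17--1.18, so the general theorem applies. Second, Definition 1.19 applied to it produces exactly the partitions with minimal differences $\Delta_1$ that avoid $\Phi$. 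That verification, not a convention check, is the real content of deducing the statement from Dousse--Konan.

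\textbf{Two smaller points.}
\begin{itemize}
\item The triangle inequality \eqref{eqn:Delta1tri} and the other properties in Lemma \ref{lem:delta1props} are not hypotheses of the Dousse--Konan theorem. The paper uses them only afterwards, to pass from partitions to frequency arrays in Lemma \ref{lem:DKfreqarray}.
\item The specialization you flag as the main obstacle is not one. The Dousse--Konan identity already holds for the refined character, with $q$ recording degree and colour variables recording Cartan weight. Setting the colour variables to $1$ gives the conformal-weight character directly, with no principal gradation involved.
\end{itemize}
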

	\begin{rem}
		A very important remark is now in order. Actually, the theorems in \cite{DouKon-slnI} allow for an impressive amount of flexibility. They actually deal with subsets of $\sP(\sC_n,\Delta)$ (yes, we do mean $\Delta$ and not $\Delta_1$) where the sets of forbidden patterns are governed by a choice of certain functions $\delta$ and $\gamma$. As long as $\delta$, $\gamma$ satisfy conditions given in Definitions 1.17 and 1.18 of \cite{DouKon-slnI}, the generating functions of corresponding coloured partitions are equal to $\chi_{\VOA{L}_1(\la{sl}_n)}(q)$. This is the content of Theorem 1.30 of \cite{DouKon-slnI} and Theorem 1.10 of \cite{DouKon-slnII}.
		
		The particular choice of $\delta$, $\gamma$ we need does not appear explicitly in \cite{DouKon-slnI}; the closest one being the $\delta_1,\gamma_1$ that are used for Proposition 1.20 of \cite{DouKon-slnI}. Thankfully, our choice is only a small modification, as we now describe.
		
		For $k\neq l$, define
		\begin{align*}
			\delta(a_kb_l)=1+\min(k,l).
		\end{align*}
		This is the same as $\delta_1$ of \cite{DouKon-slnI}.
		We define $\gamma$ as follows.
		\begin{enumerate}
			\item If $\max(k_1,l_2)<\min(k_2,l_1)$, let:
			\begin{align}
				\gamma(a_{k_1}b_{l_1},a_{k_2}b_{l_2})=1+\max(k_1,l_2).
			\end{align}
			\item If $k_1>l_1,k_2>l_2$ and $S:=\{l_2+1,\dots,k_2\}\backslash \{l_1+1,\dots,k_1\}\neq \emptyset$, define:
			\begin{align}
				\gamma(a_{k_1}b_{l_1},a_{k_2}b_{l_2})
				=\begin{cases}
					l_2+1 & \mathrm{if}\,\,l_2+1\in S\\
					\boldsymbol{k_1+1}
					& \mathrm{otherwise}.
				\end{cases}
			\end{align}
			\item If $k_1<l_1,k_2<l_2$ and $S:=\{k_1+1,\dots,l_1\}\backslash \{k_2+1,\dots,l_2\}\neq \emptyset$, define:
			\begin{align}
				\gamma(a_{k_1}b_{l_1},a_{k_2}b_{l_2})
				=\begin{cases}
					k_1+1 & \mathrm{if}\,\,k_1+1\in S\\
					\boldsymbol{l_2+1}
					& \mathrm{otherwise}.
				\end{cases}
			\end{align}
		\end{enumerate}
	The definition of $\gamma_1$ in \cite{DouKon-slnI} uses $k_2$  in place of $\boldsymbol{k_1+1}$ and $l_1$ in place of $\boldsymbol{l_2+1}$.  
	
	The actual class of coloured partitions which arises from a fixed choice of $\delta,\gamma$ is prescribed in Definition 1.19 of \cite{DouKon-slnI}. It is completely elementary, although somewhat tedious to check that the choice $\delta_1,\gamma_1$ used in \cite{DouKon-slnI} indeed leads to the coloured partitions of Proposition 1.20 of \cite{DouKon-slnI}. It is equally elementary to check that our choice of $\delta,\gamma$ leads to the coloured partitions $\sP(\sC_n,\Delta_1,\Phi)$. 
	\end{rem}
	
	\begin{rem}
		Theorem 1.30 of \cite{DouKon-slnI} and Theorem 1.10 of \cite{DouKon-slnII} actually state that  \eqref{eqn:dkchar} holds at the refined level of multi-variate generating functions keeping track of the Cartan weights as well. 
	\end{rem}

	\begin{defi}
		\label{def:freqarray}
		Let $\sC$ be a finite set of colours. A function $f: \ZZ_{>0}\times\sC \rightarrow \ZZ_{\geq 0}$,
		such that $f$ is non-zero only finitely many times is called a frequency array. 
		The weight of $f$ is defined as:
		\begin{align*}
			\sum_{k_x\in\ZZ_{>0}\times \sC}k\cdot f(k_{x})
		\end{align*}	
		The set of all frequency arrays is denoted as $\sF(\sC)$.
	\end{defi}

	Clearly, there exists a function:
	\begin{align}
		\phi: \sP(\sC_n,\Delta_1,\Phi)\rightarrow \sF(\sC_n)
	\end{align}
	where for each $\pi\in \sP(\sC_n,\Delta_1,\Phi)$ and $k_c\in \ZZ_{>0}\times\sC$, $(\phi(\pi))(k_c)$ counts the number of times $k_c$ appears in $\pi$.

	Our main lemma is the following, based on an argument  given by Primc in \cite{Pri-crystal}.
	
	\begin{lem}
		\label{lem:DKfreqarray}
		The map $\phi$ is a weight-preserving bijection between $\sP(\sC_n,\Delta_1,\Phi)$ and the subset  $\sD\sK_n\subset\sF(\sC_n)$ defined as follows.
		We let $f\in\sD\sK_n$ iff:
			\begin{enumerate}
			\item $f(k_x)\leq 1$ for all $k\geq 1$ and $x\in\sC_n$.
			\label{item:freqkx}
			\item $f(k_x)+f(k_y)\leq 1$ for all $k\geq 1$, $x\neq y\in\sC_n$, $\Delta_1(x,y)\Delta_1(y,x)\geq 1$.
			\label{item:freqkxky}
			\item $f((k+1)_x)+f(k_y)\leq 1$ for all $x,y\in\sC$ and $k\geq 1$ with $\Delta_1(x,y)=2$.
			\label{item:freqk+1xky}
			\item $f( g_x)+f(h_y)+f(k_z)\leq 2$ for all $g_x+h_y+k_z\in
			 \Phi$.
			 \label{item:freqcubic}
		\end{enumerate}
	\end{lem}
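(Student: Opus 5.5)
The plan is to build the inverse of $\phi$ explicitly. The point is that a frequency array in $\sD\sK_n$ can be arranged into a sequence obeying the $\Delta_1$ difference conditions in exactly one way. Since $\Delta_1\geq 0$ by Lemma \ref{lem:delta1props}\eqref{item:Delta1-012}, every $\pi\in\sP(\sC_n,\Delta_1)$ has weakly decreasing weights. So $\pi$ splits into consecutive blocks of constant weight $k$, and inside a block consecutive parts $x,y$ satisfy $\Delta_1(x,y)=0$. By Lemma \ref{lem:delta1props}\eqref{item:Delta1=0}, this says $x$ lies strictly to the bottom-left of $y$. That relation is transitive and irreflexive, so inside a block all colours are distinct and pairwise strictly bottom-left/top-right comparable. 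In particular the order inside each block is forced by the set of colours present, so $\phi$ is injective and clearly weight-preserving.

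Next I will show that $\phi(\pi)\in\sD\sK_n$. Condition \eqref{item:freqkx} and condition \eqref{item:freqkxky} follow from the block structure just described, using Lemma \ref{lem:delta1props}\eqref{item:Delta1xyyx>0}: two colours in the same block are strictly comparable, so their $\Delta_1$-product is $0$. For condition \eqref{item:freqk+1xky}, suppose $(k+1)_x$ and $k_y$ both occur with $\Delta_1(x,y)=2$. Let $x'$ be the last part of the weight-$(k+1)$ block and $y'$ the first part of the weight-$k$ block; these two are adjacent. Applying the triangle inequality of Lemma \ref{lem:delta1props}\eqref{item:Delta1tri} twice gives
\[2=\Delta_1(x,y)\leq \Delta_1(x,x')+\Delta_1(x',y')+\Delta_1(y',y)=\Delta_1(x',y'),\]
because the outer terms vanish (or the points coincide). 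Then adjacency forces $1\geq \Delta_1(x',y')\geq 2$, a contradiction. Condition \eqref{item:freqcubic} says that the three parts of a pattern in $\Phi$ cannot all be present. To see it, suppose they all occur in $\pi$; I will show they must then be contiguous, which contradicts $\pi$ avoiding $\Phi$.

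Conversely, given $f\in\sD\sK_n$, list its parts by decreasing weight. Within each weight, order them along the strict bottom-left relation; conditions \eqref{item:freqkx} and \eqref{item:freqkxky} together with Lemma \ref{lem:delta1props}\eqref{item:Delta1xyyx>0} make this a total order. Adjacent parts of equal weight then have $\Delta_1=0$. Adjacent parts of weights $k+1,k$ have $\Delta_1\leq 1$ by condition \eqref{item:freqk+1xky}, and weight gaps $\geq 2$ are harmless since $\Delta_1\leq 2$. The resulting sequence lies in $\sP(\sC_n,\Delta_1)$. It avoids $\Phi$, because a contiguous occurrence of a pattern would put all three parts in $f$, violating \eqref{item:freqcubic}. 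This sequence is a preimage of $f$, so $\phi$ is a bijection onto $\sD\sK_n$.

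The main obstacle is the contiguity claim used for \eqref{item:freqcubic}. Take, say, a pattern $(k+1)_u+k_h+k_c$ with $h=a_{i+1}b_{i+1}$ on $\la{h}$, all three present in $\pi$. I must rule out extra parts between $u$ and $h$ (necessarily of weight $k+1$ after $u$, or of weight $k$ before $h$) and between $h$ and $c$. My first attempt is a case check on the position of such an intervening colour $z$, using the explicit index constraints in $\Phi$ together with Lemma \ref{lem:delta1props}\eqref{item:Delta1=0} and \eqref{item:Delta1=2}. For $z$ between $h$ and $c$, it must lie strictly top-right of $h$ and strictly bottom-left of $c$; I expect this region to be empty given $i=j'$ and the row/column placement of diagonal colours. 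For a $z$ adjacent to the block boundary, I expect $\Delta_1=2$ with the neighbouring part, contradicting the condition shown above. The $(k+1,k+1,k)$ patterns will be handled the same way, by the transposed geometry.
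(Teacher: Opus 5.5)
Your proof of injectivity, of conditions (1)--(3), and of surjectivity is sound and is essentially the paper's argument. The only difference is that you use the transitivity of the ``strictly bottom-left'' relation from Lemma~\ref{lem:delta1props} directly, where the paper fixes a linear extension $\gg$ of it. Your $\Delta_1=2$ argument also correctly shows that the two parts of a $\Phi$-pattern of \emph{different} weights must be adjacent. The gap is in the rest of condition (4), the step you yourself flagged.

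The region between the two \emph{same-weight} parts of a pattern is not empty once $n\geq 5$, so contiguity cannot be forced geometrically. Take $n=5$ and the pattern $(k+1)_{a_2b_0}+k_{a_3b_3}+k_{a_4b_0}\in\Phi$ (with $i'=l=0$, $i=j'=2$, $j=4$). The colour $a_3b_1$ (row $2$, column $4$) lies strictly top-right of $a_3b_3$ (row $3$, column $3$) and strictly bottom-left of $a_4b_0$ (row $1$, column $5$). Indeed $(k+1)_{a_2b_0}+k_{a_3b_3}+k_{a_3b_1}+k_{a_4b_0}\in\sP(\sC_5,\Delta_1)$, since $\Delta_1(a_2b_0,a_3b_3)=1$ and the two later consecutive differences have $\Delta_1=0$. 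This sequence contains all three parts of the pattern non-contiguously and satisfies every difference condition. So no argument using only the geometry of $\Delta_1$ (properties of Lemma~\ref{lem:delta1props}) can prove your contiguity claim. In the notation of the definition of $\Phi$, the region is nonempty whenever $i\geq l+2$ and $j\geq i+2$ in the first family. In the $(k+1,k+1,k)$ family, between $a_lb_i$ and $a_{j+1}b_{j+1}$, it is nonempty whenever $i\geq j+2$ and $j\geq l+2$.

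The missing idea is that such a sequence is excluded because it contains a \emph{different} contiguous member of $\Phi$. In the example this is $(k+1)_{a_2b_0}+k_{a_3b_3}+k_{a_3b_1}$, with $i'=0$, $l=1$, $i=j'=2$, $j=3$. This is how the paper concludes.

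For the first family, once $(k+1)_{a_{j'}b_{i'}}$ and $k_{a_{i+1}b_{i+1}}$ are known to be adjacent, let $w$ be the part immediately after $k_{a_{i+1}b_{i+1}}$. It exists because $k_{a_jb_l}$ comes later in the same block. By transitivity $w$ is strictly top-right of $a_{i+1}b_{i+1}$ and weakly bottom-left of $a_jb_l$, so $w=a_{j_1}b_{l_1}$ with $i'\leq l\leq l_1<i<j_1\leq j$. Hence $(k+1)_{a_{j'}b_{i'}}+k_{a_{i+1}b_{i+1}}+k_{w}$ is a contiguous element of $\Phi$, a contradiction.

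For the second family, once $(k+1)_{a_{j+1}b_{j+1}}$ and $k_{a_{j'}b_{i'}}$ are adjacent, use instead the part immediately \emph{preceding} $(k+1)_{a_{j+1}b_{j+1}}$. It has the form $a_{l_1}b_{i_1}$ with $j'\leq l\leq l_1<j<i_1\leq i$, so it again completes a contiguous member of $\Phi$.

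So replace ``the three parts must be contiguous'' with: the nearest neighbour of the diagonal part inside the same-weight block always completes another forbidden pattern.
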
		
	\begin{proof}
		Due to properties \eqref{item:Delta1xy=0yx=2} and \eqref{item:Delta1tri} of $\Delta_1$ from Lemma \ref{lem:delta1props}, we may choose a total order, say  $\gg$, on $\sC_n$ such that 
		\begin{align}
			\Delta_1(x,y)=0 \implies x\gg y .
			\label{eqn:Delta1impliesorder}
		\end{align}
		Note that $\Delta_1(x,y)=0$ already implies $x\neq y$ due to Property \eqref{item:Delta1=0} of Lemma \ref{lem:delta1props}.
		We extend this order to a total order on $\ZZ_{>0}\times\sC_n$ (again denoted by $\gg$) such that $(k_1)_{c_1}\gg (k_2)_{c_2}$ iff:
		\begin{enumerate}
			\item $k_1>k_2$ or,
			\item $k_1=k_2$ implies $c_1\gg c_2$.
		\end{enumerate}
		
		For convenience, we may take $\gg$ to be equivalent to the  opposite of the order $\succ$ on $\ZZ_{>0}\times \sX_n$ defined in Definition \ref{def:grevlex}; compare the conditions for $\Delta_1(x,y)=0$ captured in Property \eqref{item:Delta1=0} of Lemma \ref{lem:delta1props} to that of the order $\succ$ given in Remark \ref{rem:toprightimpliesgreater}.
		
		Property \eqref{item:Delta1-012} of Lemma \ref{lem:delta1props} implies that $\Delta_1\geq 0$ and  we see that each $\pi\in \sP(\sC_n,\Delta)$ is weakly decreasing with respect to $\gg$. Thus, each $\pi\in\sP(\sC_n,\Delta_1,\Phi)$ is uniquely determined by its frequencies, or in other words, $\phi:\sP(\sC_n,\Delta)\rightarrow \sF(\sC_n)$ is injective.

		We now show that $\phi$ lands in $\sD\sK_n$.
		Let $\pi\in \sP(\sC_n,\Delta_1,\Phi)$, and denote the frequency array of  $\pi$ by $f$, i.e., $\phi(\pi)=f$.
		
		Pick colours $x,y\in\sC_n$ ($x$ may be equal to $y$) such that $\Delta_1(x,y)\Delta_1(y,x)\geq 1$, equivalently, $\Delta_1(x,y)\neq 0$ and $\Delta_1(y,x)\neq 0$. Thus, neither $k_x+k_y\subsetdots \pi$ nor $k_y+k_x \subsetdots\pi$ for any $k\geq 1$. If these two parts appear somewhere (non-adjacently) in $\pi$, then the parts in the middle must have weight $k$ (since $\Delta_1\geq 0$), and we either have a subsequence of the form $k_x+\cdots+k_z+\cdots+k_y\subsetdots \pi$ or $k_y+\cdots+k_z+\cdots+k_z\subsetdots \pi$. However, this is again not possible due to the triangle inequality (Lemma \ref{lem:delta1props} \eqref{item:Delta1tri}).
		In conclusion, in this case, $k_x$ and $k_y$ (or two copies of $k_x$ if $x=y$) simply can not both appear in $\pi$. This establishes that $\phi(\pi)$ satisfies \eqref{item:freqkx} and \eqref{item:freqkxky}.	A similar use of triangle inequality also proves \eqref{item:freqk+1xky}.
		
		Now let $(k+1)_x+k_y+k_z\in\Phi$, where the locations of $x,y,z$ are as follows:
		\begin{align*}
			\begin{tikzpicture}[scale=0.65]
			\draw[dotted] (4.5,-0.5) -- (0,4);
			\node at (4.75,-0.75) {$\scriptstyle{\la{h}}$};
			\draw[dotted] (0,1) -- (4.5,1);
			\draw[dotted] (0,2) -- (4.5,2);
			\draw[dotted] (0,3) -- (4.5,3);
			\draw[dotted] (3,0) -- (3,4);
			\draw[dotted] (4,0) -- (4,4);
			\node[draw,circle, inner sep=0.5ex, fill=white, minimum width=3ex] at (3,1) {$\scriptstyle{y}$};
			\node[draw,circle, inner sep=0.5ex, fill=white, minimum width=3ex] at (3,3) {$\scriptstyle{x}$};
			\node[draw,circle, inner sep=0.5ex, fill=white, minimum width=3ex] at (4,2) {$\scriptstyle{z}$};
			\end{tikzpicture}
		\end{align*}
		Suppose $(k+1)_x$ and $k_y$ are parts of $\pi$. Then, we first claim that they must appear contiguously as $(k+1)_x+k_y$. 
		If not, then there are two cases: 
		\begin{itemize}		 
		 \item We have $(k+1)_x+\cdots+k_u+\cdots+k_y\subsetdots\pi$. This implies $\Delta_1(u,y)=0$, thus, $u$ is strictly to the bottom-left of $y$, and thus, $x$ is to the top-right of $u$ (and $x,u\not\in\la{h}$). But then, $\Delta_1(x,u)=2$, contradicting the appearance of both $(k+1)_x$ and $k_u$ in $\pi$ (see the left-hand diagram in \eqref{diag:aproofofphi1}). 
		 \item We have $(k+1)_x+\cdots+(k+1)_u+\cdots+k_y \subsetdots\pi$. This implies $\Delta_1(x,u)=0$, and $x$ is strictly to the bottom-left of $u$. But then, $u$ is strictly to the top-right of $y$, and so $\Delta_1(u,y)=2$.  Therefore, both $(k+1)_u$ and $k_y$ can not appear in $\pi$ (see the middle diagram in \eqref{diag:aproofofphi1}).
		 \begin{align}
		 	\label{diag:aproofofphi1}
		 	\begin{matrix}
		 	\begin{tikzpicture}[scale=0.7]
		 		\draw[dotted] (4,0) -- (1,3);
		 		\node at (4.25,-0.25) {$\scriptstyle{\la{h}}$};
		 		\draw[dashed] (1.5,1) -- (3,1)--(3,-0.5);
		 		\node[draw,circle, inner sep=0.5ex, fill=white, minimum width=2.5ex] at (3,1) {$\scriptstyle{y}$};
		 		\node[draw,circle, inner sep=0.5ex, fill=white, minimum width=2.5ex] at (3,3) {$\scriptstyle{x}$};
		 		\node[draw,circle, inner sep=0.5ex, fill=white, minimum width=2.5ex] at (2,0) {$\scriptstyle{u}$};
		 	\end{tikzpicture}
		 	\end{matrix}
		 	\qquad
		 	\begin{matrix}
		 	\begin{tikzpicture}[scale=0.7]
		 		\draw[dotted] (4,0) -- (1,3);
		 		\node at (4.25,-0.25) {$\scriptstyle{\la{h}}$};
		 		\draw[dashed] (3,4.5) -- (3,3)--(4.5,3);
		 		\node[draw,circle, inner sep=0.5ex, fill=white, minimum width=2.5ex] at (3,1) {$\scriptstyle{y}$};
		 		\node[draw,circle, inner sep=0.5ex, fill=white, minimum width=2.5ex] at (3,3) {$\scriptstyle{x}$};
		 		\node[draw,circle, inner sep=0.5ex, fill=white, minimum width=2.5ex] at (4,4) {$\scriptstyle{u}$};
		 	\end{tikzpicture}
		 	\end{matrix}
		 	\qquad
		 	\begin{matrix}
		 		\begin{tikzpicture}[scale=0.7]
		 			\draw[dotted] (4,0) -- (1,3);
		 			\node at (4.25,-0.25) {$\scriptstyle{\la{h}}$};
		 			\draw[dashed] (3,1) -- (3,2.2)--(4,2.2)--(4,1)--(3,1);
		 			\node[draw,circle, inner sep=0.5ex, fill=white, minimum width=2.5ex] at (3,1) {$\scriptstyle{y}$};
		 			\node[draw,circle, inner sep=0.5ex, fill=white, minimum width=2.5ex] at (3,3) {$\scriptstyle{x}$};
		 			\node[draw,circle, inner sep=0.5ex, fill=white, minimum width=2.5ex] at (4,2.2) {$\scriptstyle{z}$};
		 			\node[draw,circle, inner sep=0.5ex, fill=white, minimum width=2.5ex] at (3.5,1.6) {$\scriptstyle{u}$};
		 		\end{tikzpicture}
		 	\end{matrix}
		 \end{align}
		 
		\end{itemize}
		Next, if $k_z$ also appears in $\pi$, then it can only happen when we have another part $k_u$ with $(k+1)_x+k_y+k_u+\cdots+k_z \subsetdots\pi$ (as $(k+1)_x+k_y+k_z\in\Phi$, i.e., forbidden to appear).  Here, $y$ must be strictly to the bottom-left of $u$, which in turn must be strictly to the bottom-left of $z$. Thus, the locations are as in the right-hand figure in \eqref{diag:aproofofphi1}, but then $(k+1)_x+k_y+k_u$ is a forbidden pattern belonging to $\Phi$.
	    Combining everything, if $(k+1)_x$ and $k_y$ appear in $\pi$, $k_z$ simply can not appear anywhere in $\pi$, giving us that $f((k+1)_x)+f(k_y)+f(k_z)\leq 2$.
	    
	    The analysis for  $(k+1)_x+(k+1)_y+k_z\in\Phi$ is similar so we omit it.
	    
	    We have now proved that the image of $\phi$ is a subset of $\sD\sK_n$. 
	    
	    Lastly, we prove that our map is surjective. To this end, let $f\in\sD\sK_n$.
	    Recall that $f(k_x)\leq 1$ for all $k_x\in\ZZ_{>0}\times\sC_n$.
	    So,  we take elements $k_x$ such that $f(k_x)=1$ and arrange them  strictly decreasingly according to the total order introduced at the beginning of this proof. This is our required $\pi$, after we check that it satisfies the minimal difference conditions and forbids the patterns from $\Phi$.

	    Suppose that $k_x+k_y\subsetdots\pi$ with $x\neq y$. This implies that $x\succ y$, due to how we order parts of $\pi$.
	    Next, if $\Delta_1(x,y)>0$, we have two cases.
	    \begin{itemize}
	    	\item If $\Delta_1(y,x)>0$ then $\Delta_1(x,y)\Delta_1(y,x)>0$, but, we already have $f(k_x)+f(k_y)=2$. This conflicts with $f\in \sD\sK_n$. 
	    	\item If $\Delta_1(y,x)=0$, then we are forced to have $y\succ x$ due to the definition of our order \eqref{eqn:Delta1impliesorder} which conflicts with $x\succ y$ noted above.
	    \end{itemize}
	    Thus, in either case we get a contradiction and $\Delta_1(x,y)$ is forced to be $0$, as required.
	    The case $(k+1)_x+k_y\subsetdots\pi$ but $\Delta_1(x,y)=2$ is obviously forbidden from conditions on $f\in\sD\sK_n$, thus here also $\Delta_1(x,y)\leq 1$.
	    Lastly, it is clear that $\pi$ forbids elements of $\Phi$ as subpartitions.
	\end{proof}
	
	\section{Classical freeness of \texorpdfstring{$\ala{sl}_n$}{sl\_n} at level \texorpdfstring{$1$}{1}}
	\label{sec:main}
	We have now developed all the tools required to prove the classical freeness for $\VOA{L}_1(\la{sl}_n)$.
	\begin{thm}
		\label{thm:main}
		The vertex operator algebra $\VOA{L}_1(\la{sl}_n)$ is classically free, and the set $G_2$ is a Gr\"obner basis for the ideal $\ring{I}$.
	\end{thm}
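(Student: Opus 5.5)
We apply Lemma \ref{lem:main} to the ideal $\ring{I}$ of \eqref{eqn:idealI} and the generating set $G_2$. First, $G_2$ generates $\ring{I}$. It contains $G_1$, which consists of a basis of $\ring{T}$ (by the Proposition of Section \ref{sec:quad}) together with all derivatives of these elements, so $G_1$ already generates $\ring{I}$. The added cubic elements \eqref{eqn:S3k+1l=i'red}, \eqref{eqn:S3k+1l>i'red}, \eqref{eqn:wt3k+2l=j'red}, \eqref{eqn:wt3k+2l>j'red} are reductions of $S$-polynomials of elements of $\ring{I}$, hence lie in $\ring{I}$. Every element of $G_2$ is weight-homogeneous. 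Let $\ring{K}=\langle \lt(g)\,|\,g\in G_2\rangle$. By Lemma \ref{lem:main}, it then suffices to prove the coefficient-wise inequality $H_{\ring{K}}(q)\leq H_{\ring{I}}(q)$.

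\textbf{Computing $H_{\ring{K}}$.} The quotient $\ring{R}/\ring{K}$ has a basis of monomials not divisible by any leading monomial of $G_2$. We encode a monomial by its frequency array, identifying $X(i,j)_k$ with $k_{c}$ where $c\in\sC_n$ is the colour corresponding to $X(i,j)$. The leading monomials come in four families:
\begin{enumerate}
\item the squares $X_k^2$ (from even derivatives of degenerate boxes);
\item the products $X_kY_k$ with $X\neq Y$ at the top-left and bottom-right corners of a box (Section \ref{sec:ltG1}, even derivatives);
\item the products $X_kY_{k+1}$ with $X,Y$ at the bottom-left and top-right corners of an allowed box (odd derivatives);
\item the cubics of Section \ref{sec:cubicsummary}.
\end{enumerate}
We claim that the monomials avoiding all of these are exactly the frequency arrays in $\sD\sK_n$ of Lemma \ref{lem:DKfreqarray}. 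Family (1) matches condition \eqref{item:freqkx}, by Lemma \ref{lem:delta1props}\eqref{item:Delta1xx!=0}. Family (2) matches \eqref{item:freqkxky} through Lemma \ref{lem:delta1props}\eqref{item:Delta1xyyx>0}: pairs in top-left/bottom-right position with both corners present as leading terms. Family (3) should match \eqref{item:freqk+1xky} through Lemma \ref{lem:delta1props}\eqref{item:Delta1=2}. Family (4) matches \eqref{item:freqcubic} by Remark \ref{rem:Phicubic}, since the cubic monomials are squarefree with distinct colours, so "$f(\cdot)+f(\cdot)+f(\cdot)\le 2$" is precisely non-divisibility.

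\textbf{The main obstacle.} This matching requires a careful case-by-case comparison of the box exclusions with the $\la{h}$-conditions in Lemma \ref{lem:delta1props}\eqref{item:Delta1=2}. For (3), the excluded boxes (Figures \ref{fig:notalloweddeg}, \ref{fig:notallowedvert}, \ref{fig:notallowedhoriz}) must correspond exactly to the failures of $\Delta_1=2$. A vertical pair with the bottom point on $\la{h}^e$ corresponds to case (b) or (c) with $x$ in the same column, or same row, as $y$ on $\la{h}$. We must also track the shifted placement of $a_ib_i$, which sits in row $i$ and column $i$. For (2), the exception $(i,j)=(n,n)$ must match the absence of the colour $X(n,n)$, and the degenerate boxes on $\la{h}^e$ must correspond to the constraint $x\ne y$. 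I would organize the check by the four cases of whether $x$ and $y$ lie in $\la{h}$, as in the proof of Lemma \ref{lem:delta1props}.

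\textbf{Conclusion.} Once the matching is established, Lemma \ref{lem:DKfreqarray} and Theorem \ref{thm:dk} give $H_{\ring{K}}(q)=F_n(q)=\chi_{\VOA{L}_1(\la{sl}_n)}(q)$. By \eqref{eqn:chJR>=chV} and \eqref{eqn:JRsln1}, $\chi_{\VOA{L}_1(\la{sl}_n)}(q)\le H_{\ring{I}}(q)$. Hence $H_{\ring{K}}\le H_{\ring{I}}$, and Lemma \ref{lem:main} gives both that $G_2$ is a Gröbner basis and that $H_{\ring{I}}=H_{\ring{K}}=\chi_{\VOA{L}_1(\la{sl}_n)}$. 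Theorem \ref{thm:charclassfree} then yields classical freeness. For $n=2$ we have $\Phi=\{\}$ and $G_2=G_1$, and the same argument applies.
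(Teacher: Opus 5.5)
Your proposal is correct and follows essentially the same route as the paper. You identify the monomials outside $\ring{K}=\langle\lt(g)\,|\,g\in G_2\rangle$ with $\sD\sK_n$ via Lemma \ref{lem:delta1props} and Remark \ref{rem:Phicubic}, combine Lemma \ref{lem:DKfreqarray} and Theorem \ref{thm:dk} with $\chi_{\VOA{L}_1(\la{sl}_n)}(q)\leq H_{\ring{I}}(q)$, and conclude with Lemma \ref{lem:main}. The case check you flag as the main obstacle does go through: the excluded boxes correspond exactly to the same-row and same-column failures in Property \eqref{item:Delta1=2} and to $\Delta_1(x,x)=1$ for $x\in\la{h}$, and treating squares separately is cleaner than the paper's condition (1) for $\sG_n$.
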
	
	\begin{proof}
		Let $\ring{L}=\{\lt(g)\,|\,g\in G_2\}$, $\ring{K}=\langle \ring{L}\rangle\subseteq \ring{I}\subseteq \ring{R}$, and consider $H_{\ring{K}}(q)$.
		This series is the generating function of a class of frequency arrays say $\sG_n\subset \sF(\sX_n)$  (recall Definition \ref{def:freqarray}) defined as follows.
		A frequency array $f$ belongs to $\sG_n$ iff:
		\begin{enumerate}
			\item If $x,y\in\sX_n$ form top-left and bottom-right corner (in any order) of a (possibly degenerate) box whose bottom-right corner is not $X(n,n)$, then $f(k_x)+f(k_y)\leq 1$.
			\item If $x,y\in\sX_n$ are the top-right and bottom-left corners (respectively) of a box as in Figure \ref{fig:iji'j'} (subject to exclusions as explained there), then $f((k+1)_x)+f(k_y)\leq 1$. 
			\item If we have a triple of colours $x,y,z$ as in Figure \ref{fig:3k+1}, then we can not have $f((k+1)_x)\geq 1, f(k_y)\geq 1, f(k_z)\geq 1$. Combined with the fact that each of the individual frequencies can be at most $1$, this can be equivalently phrased as $f((k+1)_x)+f(k_y)+f(k_z)\leq 2$.
			Similarly, if we have triples of colours $x,y,z$ as in Figure \ref{fig:3k+2}, then we can not have $f((k+1)_x)+f((k+1)_y)+f(k_z)\leq 2$.
		\end{enumerate}
		Translating to colours $\sC_n$, recalling Properties \eqref{item:Delta1xyyx>0}, \eqref{item:Delta1=2}, and the definition of $\Phi$ we see that $\sG_n$ is in weighted bijection with the set of frequency arrays $\sD\sK_n$ of Lemma \ref{lem:DKfreqarray}, which are in turn in a weighted bijection with coloured partitions $\sP(\sC_n,\Delta_1,\Phi)$. 
		The generating function of $\sP(\sC_n,\Delta_1,\Phi)$ equals $\chi_{\VOA{L}_1(\la{sl}_n)}(q)$ from Theorem \ref{thm:dk}. Thus far, we have obtained that:
		\begin{align*}
			H_{\ring{K}}(q)=\chi_{\VOA{L}_1(\la{sl}_n)}(q).
		\end{align*}
		However, combining with \eqref{eqn:chgr=chV}, \eqref{eqn:JRgrVsurj}, \eqref{eqn:JRsln1} we see:
		\begin{align}
			H_{\ring{K}}(q)=\chi_{\VOA{L}_1(\la{sl}_n)}(q)=\chi_{\gr_F(\VOA{L}_1(\la{sl}_n))}(q)\leq \chi_{J_{\infty}R_{\VOA{L}_1(\la{sl}_n)}}(q) = H_{\ring{I}}(q).
			\label{eqn:hilbineq}
		\end{align}
		Thus, the condition of Lemma \ref{lem:main} is satisfied and we  conclude that $H_{\ring{K}}(q)=H_{\ring{I}}(q)$.
		We obtain that $G_2$ is a Gr\"obner basis for $\ring{I}$ and additionally that equality holds throughout \eqref{eqn:hilbineq}. In particular, $\chi_{\gr_F\VOA{L}_1(\la{sl}_n)}(q)= \chi_{J_{\infty}R_{\VOA{L}_1(\la{sl}_n)}}(q)$, which proves classical freeness, see Theorem \ref{thm:charclassfree}.
	\end{proof}
	
	\section{Questions}
	\label{sec:questions}
	This proof raises a few important questions for further study.
	\begin{enumerate}[leftmargin=*]
		\item Here we have required two iterations of Buchberger's algorithm. Does there exist a  different total order on $\sX_n$ (but still the grevlex order globally) and/or a different basis  of $\ring{T}$ such that its derivatives already form a Gr\"obner basis of $\ring{I}$? We suspect that this is not possible, i.e, two iterations of the algorithm are (most likely) the minimum possible.
		
		\item The minimum number of iterations required in the Buchberger's algorithm (with respect to some natural grevlex order) should probably give rise to a heuristic ``degree of classical freeness'' of various VOAs.
		The VOAs $\VOA{L}_k(\la{sp}_{2n})$ ($k\in\ZZ_{\geq 0}$) are classically free in the strongest sense -- derivatives of a natural basis for the analogue of $\ring{T}$ in that case form a Gr\"obner basis. This is essentially implied by the work of Primc--\v{S}iki\'{c} and Primc--Trup\v{c}evi\'{c} \cite{PriSik-1}, \cite{PriSik-2}, \cite{PriSik-3}, \cite{PriTru}. Same holds for Virasoro minimal models at boundary admissible parameters $(2,2k+3)$ due to the work of Bruschek--Mourtada--Schepers \cite{BruMouSch-rr} and van Ekeren--Heluani \cite{vanEkeHel-chiralhom}. All these VOAs should perhaps be assigned ``degree 1'' of classical freeness.
		The VOAs $\VOA{L}_{1}(\la{sl}_n)$ $(n>2)$ will most likely have ``degree 2'' of classical freeness. Can this notion be defined rigorously?
		Besides combinatorial implications, does it say anything significant about the internal structure of VOAs?
		Does there exist a sequence of VOAs which have progressively higher degrees of classical freeness? 
		
		\item 
		A suggestion due to Andy Linshaw to connect the classical freeness of $\VOA{L}_1(\la{sl}_n)$ to the arc algebras of determinantal and permanental varieties \cite{Stu-algo} is currently being investigated. 
		
		\item In an ongoing and upcoming work with S.\ Marshall \cite{KanMar}, we shall show that the super VOAs $\VOA{L}_k(\la{osp}(1|2n))$ ($k\in\ZZ_{\geq 0}$) also have ``degree 1'' of classical freeness.
		
		\item It will be interesting to extend the approach presented here to  classical freeness of \emph{modules} for $\VOA{L}_1(\la{sl}_n)$.
		
		\item Exploration of this approach to other types (especially orthogonal types $\ala{so}_n$) and higher levels is also ongoing.
		
		\item It is known that changing the monomial order may significantly disrupt the combinatorial description and differential finiteness of the Gr\"obner basis; see the work of Afsharijoo \cite{Afs} in the case of the arc algebra of the ``fat point''  $\CC[x]/(x^k)$. It will be interesting to see what kind of combinatorial identities underlie various monomial orders for $\VOA{L}_1(\la{sl}_n)$.
	\end{enumerate}

%	\bibliographystyle{abbrv}
%	\bibliography{sln-level1-classicalfreeness}

\begin{thebibliography}{10}
		
		\bibitem{Afs}
		P.~Afsharijoo.
		\newblock Looking for a new version of {G}ordon's identities.
		\newblock {\em Ann. Comb.}, 25(3):543--571, 2021.
		
		\bibitem{AndvanEkeHel}
		G.~E. Andrews, J.~van Ekeren, and R.~Heluani.
		\newblock The singular support of the {I}sing model.
		\newblock {\em Int. Math. Res. Not. IMRN}, (10):8800--8831, 2023.
		
		\bibitem{AraLin-singular}
		T.~Arakawa and A.~R. Linshaw.
		\newblock Singular support of a vertex algebra and the arc space of its
		associated scheme.
		\newblock In M.~Gorelik, V.~Hinich, and A.~Melnikov, editors, {\em
			Representations and Nilpotent Orbits of Lie Algebraic Systems}, volume 330 of
		{\em Progress in Mathematics}, pages 1--17. Birkh{\"a}user, Cham, 2019.
		
		\bibitem{AraMor-arc}
		T.~Arakawa and A.~Moreau.
		\newblock Arc spaces and chiral symplectic cores.
		\newblock {\em Publications of the Research Institute for Mathematical
			Sciences}, 57(3):795--829, 2021.
		
		\bibitem{BruMouSch-rr}
		C.~Bruschek, H.~Mourtada, and J.~Schepers.
		\newblock Arc spaces and the {R}ogers-{R}amanujan identities.
		\newblock {\em Ramanujan J.}, 30(1):9--38, 2013.
		
		\bibitem{CalLepMil-rr}
		C.~Calinescu, J.~Lepowsky, and A.~Milas.
		\newblock Vertex-algebraic structure of the principal subspaces of certain
		{$A_1^{(1)}$}-modules. {I}. {L}evel one case.
		\newblock {\em Internat. J. Math.}, 19(1):71--92, 2008.
		
		\bibitem{CalLepMil-ag}
		C.~Calinescu, J.~Lepowsky, and A.~Milas.
		\newblock Vertex-algebraic structure of the principal subspaces of certain
		{$A^{(1)}_1$}-modules. {II}. {H}igher-level case.
		\newblock {\em J. Pure Appl. Algebra}, 212(8):1928--1950, 2008.
		
		\bibitem{Cap-id}
		S.~Capparelli.
		\newblock A construction of the level {$3$} modules for the affine {L}ie
		algebra {$A^{(2)}_2$} and a new combinatorial identity of the
		{R}ogers-{R}amanujan type.
		\newblock {\em Trans. Amer. Math. Soc.}, 348(2):481--501, 1996.
		
		\bibitem{CapLepMil-rr}
		S.~Capparelli, J.~Lepowsky, and A.~Milas.
		\newblock The {R}ogers-{R}amanujan recursion and intertwining operators.
		\newblock {\em Commun. Contemp. Math.}, 5(6):947--966, 2003.
		
		\bibitem{CapLepMil-ag}
		S.~Capparelli, J.~Lepowsky, and A.~Milas.
		\newblock The {R}ogers-{S}elberg recursions, the {G}ordon-{A}ndrews identities
		and intertwining operators.
		\newblock {\em Ramanujan J.}, 12(3):379--397, 2006.
		
		\bibitem{CMPP}
		S.~Capparelli, A.~Meurman, A.~Primc, and M.~Primc.
		\newblock New partition identities from {$C^{(1)}_\ell$}-modules.
		\newblock {\em Glas. Mat. Ser. III}, 57(77)(2):161--184, 2022.
		
		\bibitem{CoxLitOSh}
		D.~A. Cox, J.~Little, and D.~O'Shea.
		\newblock {\em Ideals, varieties, and algorithms}.
		\newblock Undergraduate Texts in Mathematics. Springer, Cham, fourth edition,
		2015.
		\newblock An introduction to computational algebraic geometry and commutative
		algebra.
		
		\bibitem{CreLinSong}
		T.~Creutzig, A.~R. Linshaw, and B.~Song.
		\newblock Classical freeness of orthosymplectic affine vertex superalgebras.
		\newblock {\em Proc. Amer. Math. Soc.}, 152(10):4087--4094, 2024.
		
		\bibitem{DouKon-sp2n}
		J.~Dousse and I.~Konan.
		\newblock Characters of level $1$ standard modules of {${C}_n^{(1)}$} as
		generating functions for generalised partitions.
		\newblock 2022.
		\newblock \url{https://arxiv.org/abs/2212.12728}.
		
		\bibitem{DouKon-slnI}
		J.~Dousse and I.~Konan.
		\newblock Generalisations of {C}apparelli's and {P}rimc's identities, {I}:
		{C}oloured {F}robenius partitions and combinatorial proofs.
		\newblock {\em Adv. Math.}, 408:Paper No. 108571, 70, 2022.
		
		\bibitem{DouKon-slnII}
		J.~Dousse and I.~Konan.
		\newblock Generalisations of {C}apparelli's and {P}rimc's identities, {II}:
		{P}erfect ${A}_n^{(1)}$ crystals and explicit character formulae.
		\newblock {\em J. Lond. Math. Soc. (2)}, 113(3):Paper No. e70469, 2026.
		
		\bibitem{FeiFeiLit-c2}
		B.~Feigin, E.~Feigin, and P.~Littelmann.
		\newblock Zhu's algebras, {$C_2$}-algebras and abelian radicals.
		\newblock {\em Journal of Algebra}, 329(1):130--146, 2011.
		
		\bibitem{Fei-pbw}
		E.~Feigin.
		\newblock The {PBW} filtration, {D}emazure modules and toroidal current
		algebras.
		\newblock {\em SIGMA Symmetry Integrability Geom. Methods Appl.}, 4:Paper 070,
		21, 2008.
		
		\bibitem{FreZhu}
		I.~B. Frenkel and Y.~Zhu.
		\newblock Vertex operator algebras associated to representations of affine and
		{V}irasoro algebras.
		\newblock {\em Duke Math. J.}, 66(1):123--168, 1992.
		
		\bibitem{GabGan}
		M.~R. Gaberdiel and T.~Gannon.
		\newblock {Z}hu's algebra, the {${C}_2$} algebra, and twisted modules.
		\newblock In {\em Vertex operator algebras and related areas}, volume 497 of
		{\em Contemp. Math.}, pages 65--78. Amer. Math. Soc., Providence, RI, 2009.
		
		\bibitem{Kan-survey}
		S.~Kanade.
		\newblock {L}epowsky--{W}ilson {$Z$}-algebras and {R}ogers--{R}amanujan-type
		identities: {R}ecent advances.
		\newblock In {\em Srinivasa Ramanujan: His Life, Legacy and Mathematical
			Influence}. Springer.
		\newblock to appear.
		
		\bibitem{KanMar}
		S.~Kanade and S.~Marshall.
		\newblock In progress.
		
		\bibitem{KKMMNN}
		S.-J. Kang, M.~Kashiwara, K.~C. Misra, T.~Miwa, T.~Nakashima, and
		A.~Nakayashiki.
		\newblock Affine crystals and vertex models.
		\newblock In {\em Infinite analysis, {P}art {A}, {B} ({K}yoto, 1991)},
		volume~16 of {\em Adv. Ser. Math. Phys.}, pages 449--484. World Sci. Publ.,
		River Edge, NJ, 1992.
		
		\bibitem{LepLi}
		J.~Lepowsky and H.~Li.
		\newblock {\em Introduction to vertex operator algebras and their
			representations}, volume 227 of {\em Progress in Mathematics}.
		\newblock Birkh\"auser Boston, Inc., Boston, MA, 2004.
		
		\bibitem{LepMil}
		J.~Lepowsky and S.~Milne.
		\newblock Lie algebraic approaches to classical partition identities.
		\newblock {\em Adv. in Math.}, 29(1):15--59, 1978.
		
		\bibitem{LepWil-pnasNewAlg}
		J.~Lepowsky and R.~L. Wilson.
		\newblock A new family of algebras underlying the {R}ogers-{R}amanujan
		identities and generalizations.
		\newblock {\em Proc. Nat. Acad. Sci. U.S.A.}, 78(12):7254--7258, 1981.
		
		\bibitem{LepWil-pnasLieRR}
		J.~Lepowsky and R.~L. Wilson.
		\newblock The {R}ogers-{R}amanujan identities: {L}ie theoretic interpretation
		and proof.
		\newblock {\em Proc. Nat. Acad. Sci. U.S.A.}, 78(2):699--701, 1981.
		
		\bibitem{LepWil-structI}
		J.~Lepowsky and R.~L. Wilson.
		\newblock The structure of standard modules. {I}. {U}niversal algebras and the
		{R}ogers-{R}amanujan identities.
		\newblock {\em Invent. Math.}, 77(2):199--290, 1984.
		
		\bibitem{Li-abelianizing}
		H.~Li.
		\newblock Abelianizing vertex algebras.
		\newblock {\em Communications in Mathematical Physics}, 259(2):391--411, 2005.
		
		\bibitem{Li-remarks}
		H.~Li.
		\newblock Some remarks on associated varieties of vertex operator
		superalgebras.
		\newblock {\em Eur. J. Math.}, 7(4):1689--1728, 2021.
		
		\bibitem{LiMil}
		H.~Li and A.~Milas.
		\newblock Jet schemes, quantum dilogarithm and {F}eigin-{S}toyanovsky's
		principal subspaces.
		\newblock {\em J. Algebra}, 640:21--58, 2024.
		
		\bibitem{LinSong-cosets}
		A.~R. Linshaw and B.~Song.
		\newblock Cosets of free field algebras via arc spaces.
		\newblock {\em Int. Math. Res. Not. IMRN}, (1):47--114, 2024.
		
		\bibitem{LinSong-invsympl}
		A.~R. Linshaw and B.~Song.
		\newblock Standard monomials and invariant theory of arc spaces {II}:
		{S}ymplectic group.
		\newblock {\em J. Algebraic Geom.}, 33(4):601--628, 2024.
		
		\bibitem{MeuPri-annfields}
		A.~Meurman and M.~Primc.
		\newblock Annihilating fields of standard modules of {${\mathfrak s}{\mathfrak
				l}(2,{\bf C})^\sim$} and combinatorial identities.
		\newblock {\em Mem. Amer. Math. Soc.}, 137(652):viii+89, 1999.
		
		\bibitem{MeuPri-sl3}
		A.~Meurman and M.~Primc.
		\newblock A basis of the basic {$\mathfrak s\mathfrak l(3,\mathbb
			C)^\sim$}-module.
		\newblock {\em Commun. Contemp. Math.}, 3(4):593--614, 2001.
		
		\bibitem{Pri-crystal}
		M.~Primc.
		\newblock Some crystal {R}ogers-{R}amanujan type identities.
		\newblock {\em Glas. Mat. Ser. III}, 34(54)(1):73--86, 1999.
		
		\bibitem{PriTru}
		M.~Primc and G.~Trup\v{c}evi\'{c}.
		\newblock Linear independence for {$C_\ell^{(1)}$} by using
		{$C_{2\ell}^{(1)}$}.
		\newblock {\em J. Algebra}, 661:341--356, 2025.
		
		\bibitem{PriSik-1}
		M.~Primc and T.~\v{S}iki\'c.
		\newblock Combinatorial bases of basic modules for affine {L}ie algebras
		{${C}_n^{(1)}$}.
		\newblock {\em J. Math. Phys.}, 57(9):091701, 19, 2016.
		
		\bibitem{PriSik-2}
		M.~Primc and T.~\v{S}iki\'c.
		\newblock Leading terms of relations for standard modules of the affine {L}ie
		algebras {${C}_n^{(1)}$}.
		\newblock {\em Ramanujan J.}, 48(3):509--543, 2019.
		
		\bibitem{PriSik-3}
		M.~Primc and T.~\v{S}iki\'{c}.
		\newblock Combinatorial relations among relations for level 2 standard
		{$C^{(1)}_n$}-modules.
		\newblock {\em J. Math. Phys.}, 64(8):Paper No. 081702, 13, 2023.
		
		\bibitem{Rus-CMPP}
		M.~C. Russell.
		\newblock Companions to the {A}ndrews-{G}ordon and {A}ndrews-{B}ressoud
		identities and recent conjectures of {C}apparelli, {M}eurman, {P}rimc, and
		{P}rimc.
		\newblock {\em SIGMA Symmetry Integrability Geom. Methods Appl.}, 22:Paper No.
		046, 2026.
		
		\bibitem{Sal-vir}
		D.~Salazar.
		\newblock Boundary minimal models and the {R}ogers-{R}amanujan identities.
		\newblock {\em J. Pure Appl. Algebra}, 230(6):Paper No. 108281, 2026.
		
		\bibitem{SongZeng-E8}
		B.~Song and X.~Zeng.
		\newblock {Z}hu's algebra and the {$C_2$}-algebra of a classically free vertex
		operator algebra.
		\newblock 2026.
		\newblock \url{https://arxiv.org/abs/2606.14407}.
		
		\bibitem{Stu-algo}
		B.~Sturmfels.
		\newblock {\em Algorithms in invariant theory}.
		\newblock Texts and Monographs in Symbolic Computation. Springer, Vienna,
		second edition, 2008.
		
		\bibitem{vanEkeHel-chiralhom}
		J.~van Ekeren and R.~Heluani.
		\newblock Chiral homology of elliptic curves and the {Z}hu algebra.
		\newblock {\em Comm. Math. Phys.}, 386(1):495--550, 2021.
		
		\bibitem{Zhu1996-modinv}
		Y.~Zhu.
		\newblock Modular invariance of characters of vertex operator algebras.
		\newblock {\em Journal of the American Mathematical Society}, 9(1):237--302,
		1996.
		
	\end{thebibliography}

\end{document}